\definecolor{shadecolor}{rgb}{1,0.8,0.3}
\title{\textbf{An Asymptotically Exact Multiple Testing Procedure under Dependence}}
\date{}
\author[1]{Swarnadeep Datta\footnote{\href{swarnadeepdatta0122@gmail.com}{swarnadeepdatta0122@gmail.com}}}
\author[2]{Monitirtha Dey\footnote{\href{mdey@uni-bremen.de}{mdey@uni-bremen.de}}}
\affil[1]{\footnotesize Interdisciplinary Statistical Research Unit, Indian Statistical Institute, Kolkata, India}
\affil[2]{\small Institute for Statistics, University of Bremen, Bremen, Germany}
\begin{document}

\maketitle
\theoremstyle{plain}
\newtheorem{axiom}{Axiom}
\newtheorem{remark}{Remark}
\newtheorem{corollary}{Corollary}[section]
\newtheorem{claim}[axiom]{Claim}
\newtheorem{theorem}{Theorem}[section]
\newtheorem{lemma}{Lemma}[section]
\newtheorem{test}{Test Procedure}
\newtheorem{exa}{Example}
\newtheorem{rem}{Remark}
\newtheorem{proposition}{Proposition}

\newaliascnt{lemmaa}{theorem}
\newtheorem{lemmaa}[lemmaa]{Theorem}
\aliascntresetthe{lemmaa}
\providecommand*{\lemmaautorefname}{Lemma}
\providecommand*{\corollaryautorefname}{Corollary}
\providecommand*{\testautorefname}{Test Procedure}
\providecommand*{\theoremautorefname}{Theorem}
\providecommand*{\propositionautorefname}{Proposition}
\providecommand*{\exaautorefname}{Example}
\providecommand*{\remautorefname}{Remark}

\theoremstyle{definition}
\newtheorem{definition}{Definition}



\theoremstyle{definition}

\begin{abstract}

We propose a simple single-step multiple testing procedure that asymptotically controls the family-wise error rate (FWER) at the desired level exactly under the equicorrelated multivariate Gaussian setup. The method is shown to be asymptotically exact using an explicit plug-in estimator for the equicorrelation, and does not require stepwise adjustments. We establish its theoretical properties, including the convergence to the desired error level, and demonstrate its effectiveness through simulation results. We also spell out related extensions to block-correlated structures and generalized FWER control.
\end{abstract}

\section{Introduction}

The classical theory of statistical inference is derived from the assumption of independent and identically distributed (i.i.d.) data. However, recent technological advances permit modern science to collect, store and explore massive datasets to answer pertinent domain-specific questions. These datasets often showcase inherent dependencies among the observations, necessitating the refinement and tailoring of classical inference tools. These adjustments frequently warrant new proof techniques. Therefore, the topic of statistical inference under dependence has been appealing to statisticians for decades for its novel and intriguing theory and practical relevance. Even more additional challenges arise when one considers the problem of simultaneous inference: controlling overall type I error and maintaining a reasonable power while tackling dependence among test statistics is known to be a formidable task \citep{Dickhaus}.

The equicorrelation structure depicts the simplest form of dependence. Although simple in form, this setting arises in several applications, e.g., when comparing a control against many treatments. Consequently, simultaneous testing of normal means under equicorrelated frameworks has witnessed considerable attention in recent years \citep{delattre, deybhandari_seq, FDR2007, FDR2009, Proschan, royspl}. The limiting behavior of Bonferroni FWER under the equicorrelated Normal setting was considered in \cite{das_2021}. They explicate that the Bonferroni FWER is asymptotically (as the number of hypotheses approaches infinity) a convex function in equicorrelation $\rho$ and hence bounded above by $\alpha(1-\rho)$, $\alpha$ being the target level. Improving their result, \cite{deybhandari} elucidate that the Bonferroni FWER asymptotically goes to zero for any $\rho>0$. \cite{deystpa} shows that this limiting zero phenomenon holds for the general class of step-down FWER-controlling procedures. In a related but different direction, \cite{FDR2007} study the (limiting) empirical distribution function of the p-values and utilize those to study limiting behaviors of FDR of the linear step-up procedure. However, the present literature still lacks a multiple testing procedure that is asymptotically exact under equicorrelation, i.e., which has limiting FWER exactly equal to $\alpha$ under equicorrelation. 

This work aims to close this gap by proposing a simple, single-step, common-cutoff multiple testing procedure. Our method is shown to be asymptotically exact using an explicit plug-in estimator for the equicorrelation, and does not require stepwise adjustments. Related extensions to block correlated structures and generalized FWER control are also spelled out.

The rest of the article is laid out as follows. In Section \ref{sec:prelims} we state our testing framework and revisit existing results on Bonferroni procedure. Section \ref{Sec:our_proposal} presents our proposed procedure and discusses its theoretical properties. We investigate its power properties in Section \ref{sec:power}. Extensions to block correlated structures and to generalized FWER control are explicated in Sections \ref{sec:block} and \ref{sec:kFWER}, respectively. Numerical experiments evaluating the empirical performance of our proposed procedure are presented in Section \ref{sec:sim_study}. We conclude in  Section \ref{sec:discussion}. Proofs are deferred to the appendix. 

Throughout this work, $\phi$ and $\Phi$ respectively denote the p.d.f and c.d.f of $N(0)$ distribution. $\Phi^n(\cdot)$ denotes $[\Phi(\cdot)]^n$. Also, $\mathcal{I}$ represents the set $\{1, \dots, n\}$.

\section{Preliminaries}\label{sec:prelims}

This work considers the equicorrelated Gaussian setup. We have $n$ observations
$$X_i \sim N(\mu_i),   i \in \mathcal{I} $$
where $\operatorname{Corr}(X_i, X_j) = \rho, \ \forall \ i \neq j   \text{ with }  \rho \in (0).$
 We work with unit variances since the literature on the multiple testing theory often assumes that the variances are known. We wish to test:
$$\mathrm{H_{0i}} : \mu_i = 0 \quad \text{vs} \quad \mathrm{H_{1i}} : \mu_i > 0, \quad i \in \mathcal{I}.$$
The global null $\mathrm{H_0} = \bigcap_{i=1}^{n} \mathrm{H_{0i}}$ asserts that each $\mu_i$ is zero.

Let $V_n(T)$ denote the number of false rejections of a multiple testing procedure (MTP henceforth) $T$. The family-wise error rate (FWER) of this procedure is defined as
$$\mathrm{FWER}^T = \mathbb{P}(V_n(T) \geq 1).\label{1st}$$


Under the mentioned framework, if we want to develop a single-step MTP, it is natural to use a right-sided common cutoff for each of $X_1, X_2, \ldots, X_n$, say, $c_n(\alpha, \rho)$. Then the FWER is given as:
$$\mathrm{FWER}= \mathbb{P} \left( \bigcup_{i\in\mathcal{I}_0} \{ X_i > c_n(\alpha, \rho) \} \right),$$
where $\mathcal{I}_0$ is the index set of true nulls. Hence, the FWER under global null $H_0$ is:
$$\mathrm{FWER}_{\mathrm{H_0}}= \mathbb{P}_{\mathrm{H_0}} \left( \bigcup_{i=1}^n \{ X_i > c_n(\alpha, \rho) \} \right).$$

Throughout this work, $n_0:=|\mathcal{I}_0|$ and $n_1=n-n_0$. \cite{das_2021} elucidate that for the equicorrelated Normal setup, the following representation of $\mathrm{FWER}_{H_{0}}$ holds for any single-step testing procedure with common right-sided cutoff $\tau_n(\alpha, \rho)$: 
\begin{equation}
    \mathrm{FWER}_{\mathrm{H_0}}(n, \alpha, \rho)=1-\mathbb{E}_Z \left[ \Phi^n\left( \frac{\tau_n(\alpha, \rho) + \sqrt{\rho} Z}{\sqrt{1 - \rho}} \right) \right], \text{where} Z\sim N(0). \label{4th}
\end{equation}

The classical Bonferroni procedure yields the right-sided cutoff $c_{Bon}(n, \alpha)=\Phi^{-1}(1-\frac{\alpha}{n})$ for any value of equicorrelation $\rho$. Suppose $\mathrm{FWER}^{Bon}(n, \alpha, \rho)$ denotes the Bonferroni FWER under this setting. \cite{deybhandari} showed the following:
\begin{theorem}\label{rev1}
    Given any $\alpha \in (0, 1)$ and $\rho \in (0, 1]$, $\displaystyle \lim_{n \to \infty} \mathrm{FWER}_{\mathrm{H_0}}^{Bon}(n, \alpha, \rho) = 0$.
\end{theorem}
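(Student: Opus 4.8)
The plan is to start from the exact representation of the Bonferroni FWER under the global null. Plugging the Bonferroni cutoff $c_{Bon}(n,\alpha)=\Phi^{-1}(1-\alpha/n)$ into \eqref{4th} gives
\begin{equation*}
    \mathrm{FWER}_{\mathrm{H_0}}^{Bon}(n,\alpha,\rho)=1-\mathbb{E}_Z\left[\Phi^n\!\left(\frac{c_{Bon}(n,\alpha)+\sqrt{\rho}\,Z}{\sqrt{1-\rho}}\right)\right],
\end{equation*}
so it suffices to show that the expectation converges to $1$ as $n\to\infty$. Since the integrand is bounded by $1$, by dominated convergence it is enough to prove that for $Z$-almost every fixed value $z$, $\Phi^n\big((c_{Bon}(n,\alpha)+\sqrt{\rho}\,z)/\sqrt{1-\rho}\big)\to 1$.

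The key quantitative input is the asymptotics of the Bonferroni cutoff. By the standard Mills-ratio tail bound for the normal, $1-\Phi(x)\sim \phi(x)/x$ as $x\to\infty$, inverting $1-\Phi(c_{Bon})=\alpha/n$ yields $c_{Bon}(n,\alpha)=\sqrt{2\log n}\,(1+o(1))$; more precisely $c_{Bon}(n,\alpha)=\sqrt{2\log n}-\frac{\log\log n+\log(4\pi\alpha^{-2})}{2\sqrt{2\log n}}+o(1/\sqrt{\log n})$, but the crude rate $c_{Bon}\to\infty$ with $c_{Bon}/\sqrt{\log n}\to\sqrt 2$ is all that is needed. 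Then for fixed $z$ and fixed $\rho\in(0,1]$, the argument $a_n(z):=(c_{Bon}(n,\alpha)+\sqrt{\rho}\,z)/\sqrt{1-\rho}\to\infty$ (when $\rho=1$ one instead notes $c_{Bon}+\sqrt\rho z\to\infty$ deterministically and handles that boundary case separately, where the representation degenerates and $X_i=\mu_i+Z$ gives FWER $=\mathbb P(Z>c_{Bon})\to 0$ directly). To show $\Phi^n(a_n(z))\to 1$, write $\log\Phi^n(a_n(z))=n\log(1-(1-\Phi(a_n(z))))\sim -n\,(1-\Phi(a_n(z)))$ and show $n\,(1-\Phi(a_n(z)))\to 0$. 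Using the tail bound, $n\,(1-\Phi(a_n(z)))\le n\,\phi(a_n(z))/a_n(z)$, and since $a_n(z)\sim c_{Bon}/\sqrt{1-\rho}\sim\sqrt{2\log n/(1-\rho)}$ while $n\,\phi(c_{Bon})\asymp \alpha\,c_{Bon}$, one checks that $n\,\phi(a_n(z))$ decays like a negative power of $n$ times logarithmic factors; concretely $\phi(a_n(z))/\phi(c_{Bon})=\exp\big(-\tfrac12(a_n(z)^2-c_{Bon}^2)\big)$ and $a_n(z)^2-c_{Bon}^2=\frac{\rho}{1-\rho}c_{Bon}^2+o(c_{Bon}^2)\sim\frac{\rho}{1-\rho}\cdot 2\log n$, so $n\,\phi(a_n(z))\asymp n^{1-\rho/(1-\rho)}\cdot(\text{poly-log})$, which tends to $0$ whenever $\rho/(1-\rho)>1$, i.e. $\rho>1/2$; for $\rho\le 1/2$ the exponent is nonnegative and this crude bound fails, so a sharper argument is required.

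The main obstacle is therefore the regime of small $\rho$, where the Gaussian shift $\sqrt\rho z$ is too weak relative to the $\sqrt{1-\rho}$ rescaling for the term-by-term bound above to close. The fix is to not fix $z$ too soon: split the expectation according to whether $Z\le t_n$ or $Z>t_n$ for a slowly growing threshold $t_n$ (e.g. $t_n=\sqrt{\log\log n}$ or a small constant multiple of $\sqrt{\log n}$ chosen so that $\mathbb P(Z>t_n)\to 0$ but $\sqrt\rho\,t_n$ is still negligible compared to $c_{Bon}$). On the event $\{Z\le t_n\}$ one has $\Phi^n(a_n(Z))\ge \Phi^n\big((c_{Bon}-|\sqrt\rho\,t_n|)/\sqrt{1-\rho}\big)$ and one applies the tail estimate to this deterministic lower bound, now with the shift absorbed into a lower-order correction of $c_{Bon}$, so that the relevant argument is $\sim c_{Bon}/\sqrt{1-\rho}$ and $n$ times its Gaussian tail still tends to $0$ for every $\rho\in(0,1)$ (since $(1-\rho)<1$ makes $a_n^2-c_{Bon}^2$ of order $\frac{\rho}{1-\rho}\cdot 2\log n>0$ regardless — wait, the point is precisely that with the shift pushed to lower order, $a_n^2\sim c_{Bon}^2/(1-\rho)$ and $c_{Bon}^2/(1-\rho)-c_{Bon}^2=\frac{\rho}{1-\rho}c_{Bon}^2$, giving $n\phi(a_n)\asymp n^{1-\rho/(1-\rho)}$ again). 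So in fact the correct device is the reverse: one must exploit that $\Phi^n$ is raised to the power $n$ and use $\Phi(a_n)\ge 1-\phi(a_n)/a_n$ directly, giving $\Phi^n(a_n)\ge (1-\phi(a_n)/a_n)^n\to 1$ iff $n\phi(a_n)\to 0$, and this holds for all $\rho\in(0,1]$ once one checks $n(1-\Phi(c_{Bon}/\sqrt{1-\rho}+o(\sqrt{\log n})))\to 0$: since $1-\Phi(c_{Bon})=\alpha/n$, we get $1-\Phi(c_{Bon}/\sqrt{1-\rho})\approx (\alpha/n)^{1/(1-\rho)}\cdot(\text{poly-log})=o(1/n)$ because $1/(1-\rho)>1$. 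That is the crux: $n\cdot(\alpha/n)^{1/(1-\rho)}\to 0$ precisely when $\rho>0$. Assembling these pieces — the cutoff asymptotics, the per-$z$ (or per-slowly-growing-threshold) tail estimate showing the integrand $\to 1$, and dominated convergence — yields $\lim_n \mathrm{FWER}_{\mathrm{H_0}}^{Bon}=1-1=0$, and the boundary case $\rho=1$ is dispatched separately as noted above.
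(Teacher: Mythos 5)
Your overall strategy is the right one, and it is essentially the standard route to this (cited) result: plug $c_{Bon}(n,\alpha)=\Phi^{-1}(1-\alpha/n)$ into the representation \eqref{4th}, show the integrand tends to $1$ for every fixed $z$, finish by dominated convergence (the integrand lies in $[0,1]$), and treat $\rho=1$ separately. Your closing observation is indeed the crux: $n\cdot(\alpha/n)^{1/(1-\rho)}\to 0$ precisely when $\rho>0$. Note that the present paper does not reprove \autoref{rev1} (it quotes it from the literature), but its appendix machinery yields the same conclusion very quickly: since $c_{Bon}(n,\alpha)/\sqrt{1-\rho}-a_n\to\infty$ for every $\rho\in(0,1)$, the integrand eventually dominates $\Phi^{n}(a_n+t)$ for each fixed $t>0$ and fixed $z$, so \autoref{appendixlemma2} together with the dominated-convergence step of \autoref{appendixlemma3} gives $\mathbb{E}[\cdot]\to 1$.

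The middle of your write-up, however, contains a concrete arithmetic slip that sends you on an unnecessary and partly incorrect detour. Since $\phi(c_{Bon})\sim c_{Bon}\,\alpha/n$ by the Mills ratio, one has $n\,\phi(a_n(z))=n\,\phi(c_{Bon})\exp\bigl\{-\tfrac12\bigl(a_n(z)^2-c_{Bon}^2\bigr)\bigr\}\asymp \sqrt{\log n}\;n^{-\rho/(1-\rho)}\,e^{O(\sqrt{\log n})}$, not $n^{1-\rho/(1-\rho)}$: you double-counted a factor of $n$. Consequently the fixed-$z$ bound already gives $n\,(1-\Phi(a_n(z)))\to 0$ for \emph{every} $\rho\in(0,1)$; the purported obstacle at $\rho\le 1/2$ does not exist, and no truncation of $Z$ is needed. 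Moreover, the truncation as you state it is oriented the wrong way: the integrand is increasing in $Z$, so the dangerous region is $Z\to-\infty$, and on $\{Z\le t_n\}$ the claimed lower bound $\Phi^n(a_n(Z))\ge\Phi^n\bigl((c_{Bon}-\sqrt{\rho}\,t_n)/\sqrt{1-\rho}\bigr)$ is false (take $Z<-t_n$); the split would have to be on $\{Z\ge -t_n\}$ versus its complement. (Incidentally, the second-order term of the cutoff should involve $\log(4\pi\alpha^{2})$ rather than $\log(4\pi\alpha^{-2})$, though, as you say, only the leading order is used.) Once the detour is deleted and you keep (i) $c_{Bon}\sim\sqrt{2\log n}$, (ii) the corrected fixed-$z$ tail computation, and (iii) dominated convergence, plus your separate treatment of $\rho=1$, the proof is complete and agrees with the paper's own machinery.
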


\cite{deybhandari} also extended \autoref{rev1} to general correlation matrices under any configuration of true and false null hypotheses as follows:

\begin{theorem}\label{rev4}
    Suppose $\displaystyle\liminf \rho_{ij} = \delta > 0$. Then, for any $\alpha \in (0, 1)$,
\[
\lim_{n \to \infty} \mathrm{FWER}^{Bon}(n, \alpha, \Sigma_n) = 0
\]
under any configuration of true and false null hypotheses.
\end{theorem}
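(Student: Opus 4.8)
The plan is to reduce the general positively-correlated configuration to the equicorrelated global-null situation already settled in \autoref{rev1}, via a Gaussian comparison (Slepian) argument. Since a false rejection can occur only at a true null, and the true-null coordinates $(X_i)_{i\in\mathcal{I}_0}$ form a standard Gaussian vector with correlations $\rho_{ij}$, we have
\[
\mathrm{FWER}^{Bon}(n, \alpha, \Sigma_n) = \mathbb{P}\!\left( \max_{i \in \mathcal{I}_0} X_i > c_{Bon}(n, \alpha) \right), \qquad c_{Bon}(n,\alpha) = \Phi^{-1}\!\left(1 - \tfrac{\alpha}{n}\right).
\]
If $n_0 = |\mathcal{I}_0|$ stays bounded (along a subsequence), the crude union bound $\mathrm{FWER}^{Bon}(n,\alpha,\Sigma_n) \le n_0\,\alpha/n$ already drives the limit to $0$ there, so by a routine subsequence argument it suffices to treat the case $n_0 \to \infty$.

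Next I would strip off the finitely many ``bad'' coordinates. Fix $\varepsilon \in (0,\delta)$ and put $\delta' := \delta - \varepsilon \in (0,1)$. The hypothesis $\liminf \rho_{ij} = \delta$ supplies an index $N$ such that $\rho_{ij} > \delta'$ whenever $i \ne j$ with $i, j > N$. Writing $\mathcal{I}_0' := \mathcal{I}_0 \cap \{N+1, N+2, \dots\}$ and $n_0' := |\mathcal{I}_0'|$, a union bound over the at most $N$ discarded indices gives
\[
\mathrm{FWER}^{Bon}(n, \alpha, \Sigma_n) \le \frac{N\alpha}{n} + \mathbb{P}\!\left( \max_{i \in \mathcal{I}_0'} X_i > c_{Bon}(n, \alpha) \right),
\]
where the first term vanishes as $n \to \infty$, and on $\mathcal{I}_0'$ every pairwise correlation is at least $\delta'$.

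The crux is Slepian's inequality. Let $(Y_i)_{i \in \mathcal{I}_0'}$ be an equicorrelated standard Gaussian vector with common correlation $\delta'$, a legitimate covariance structure. The off-diagonal entries of $\mathrm{Cov}\big((X_i)_{i \in \mathcal{I}_0'}\big)$ dominate those of $\mathrm{Cov}\big((Y_i)_{i \in \mathcal{I}_0'}\big)$ entrywise while the variances agree, so Slepian's comparison lemma gives $\mathbb{P}(\max_{i \in \mathcal{I}_0'} X_i > t) \le \mathbb{P}(\max_{i \in \mathcal{I}_0'} Y_i > t)$ for every $t$. Since $n \ge n_0'$ we have $c_{Bon}(n,\alpha) \ge \Phi^{-1}(1 - \alpha/n_0')$, whence, for $n$ large enough that $n_0' \ge 1$,
\[
\mathbb{P}\!\left( \max_{i \in \mathcal{I}_0'} X_i > c_{Bon}(n, \alpha) \right) \le \mathbb{P}\!\left( \max_{i \in \mathcal{I}_0'} Y_i > \Phi^{-1}\!\left(1 - \tfrac{\alpha}{n_0'}\right) \right) = \mathrm{FWER}_{\mathrm{H_0}}^{Bon}(n_0', \alpha, \delta').
\]
As $n_0 \to \infty$ and $n_0' \ge n_0 - N$, we have $n_0' \to \infty$, so \autoref{rev1} (applied with $\rho = \delta' \in (0,1]$) forces the right-hand side to $0$; combining the displays finishes the proof.

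I expect the genuinely substantive step to be the Slepian comparison that converts positive association into the equicorrelated bound, with everything else being bookkeeping. The points to watch are the precise reading of $\liminf \rho_{ij} = \delta$ (so that all but finitely many coordinates enjoy correlation $\ge \delta' > 0$, which the choice of $N$ secures), the dichotomy on whether $n_0$ is bounded (dispatched by subsequences), and the check that after both truncations the surviving correlation $\delta'$ is still strictly positive and the surviving count $n_0'$ still diverges, since those are exactly the hypotheses needed to invoke \autoref{rev1}.
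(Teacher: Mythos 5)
Your argument is correct: reducing to the true-null coordinates, discarding the finitely many indices allowed by $\liminf \rho_{ij}=\delta>0$, and then using Slepian's inequality to dominate the exceedance probability by the equicorrelated Bonferroni FWER at level $\delta-\varepsilon$ with $n_0'\to\infty$ hypotheses is exactly the right reduction to \autoref{rev1}, and the bounded-$n_0$ case is indeed dispatched by the union bound. Note that the paper itself gives no proof of \autoref{rev4} (it is quoted from \cite{deybhandari}), but your Slepian-comparison route is the standard one for this result and is the same device the paper invokes for \autoref{block2}, so your write-up is consistent with the paper's framework and complete as it stands.
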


\section{Proposed Procedure}\label{Sec:our_proposal}

This section consists of two scenarios: when the common correlation $\rho$ is known, and when it is unknown. 

\subsection{Known $\rho$}

Here we suggest the following testing procedure:

\begin{test}\label{test1}
    For given equicorrelation value $\rho\in(0)$ and a target level $\alpha \in (0, 1)$, we define the cutoff
\begin{equation}c_n(\alpha, \rho) :=\sqrt{1-\rho}  \Phi^{-1}\left(1-\frac{1}{n}\right)-\sqrt{\rho}  \Phi^{-1}(\alpha). \label{5th}
\end{equation}
For each $i \in \mathcal{I}$, reject $\mathrm{H_{0i}}$ if $X_i>c_{n}(\alpha, \rho)$.
\end{test}

Note that $\displaystyle\lim_{n \to \infty} \frac{c_n(\alpha, \rho)}{c_{Bon}(n, \alpha)}=\sqrt{1-\rho}<1$, implying that this procedure potentially rejects more hypotheses than the Bonferroni method. Suppose $\mathrm{FWER_{H_{0}}^{I}}(n, \alpha, \rho)$ denotes the FWER under this setting. The following theorem is the key result of this work:

\begin{theorem}\label{fwer1}
   Consider the equicorrelated Gaussian setting with known common correlation $\rho \in (0, 1)$. Then the proposed multiple testing procedure (\autoref{test1}) is asymptotically exact under the global null w.r.t FWER control, i.e.,
   \[ \displaystyle\lim_{n\to\infty}\mathrm{FWER_{\mathrm{H_0}}^{I}}(n, \alpha, \rho)=\alpha.\]
\end{theorem}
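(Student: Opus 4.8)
The plan is to substitute the explicit cutoff $c_n(\alpha,\rho)$ from \eqref{5th} into the representation \eqref{4th} of $\mathrm{FWER}_{\mathrm{H_0}}$ and track the limit of the expectation as $n\to\infty$. Plugging in $\tau_n(\alpha,\rho)=c_n(\alpha,\rho)=\sqrt{1-\rho}\,\Phi^{-1}(1-\tfrac1n)-\sqrt{\rho}\,\Phi^{-1}(\alpha)$, the argument of $\Phi^n$ simplifies neatly: the $\sqrt{1-\rho}$ factors cancel in the leading term, giving
\[
\frac{c_n(\alpha,\rho)+\sqrt{\rho}\,Z}{\sqrt{1-\rho}}
=\Phi^{-1}\!\left(1-\tfrac1n\right)+\sqrt{\tfrac{\rho}{1-\rho}}\,(Z-\Phi^{-1}(\alpha)).
\]
So the task reduces to computing $\lim_{n\to\infty}\mathbb{E}_Z\!\left[\Phi^n\!\big(a_n+b(Z-z_0)\big)\right]$ where $a_n=\Phi^{-1}(1-1/n)$, $b=\sqrt{\rho/(1-\rho)}$ and $z_0=\Phi^{-1}(\alpha)$, and then showing the desired limit is $1-\alpha$ so that $\mathrm{FWER}_{\mathrm{H_0}}^{\mathrm I}\to\alpha$.

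The core of the argument is a pointwise-in-$Z$ analysis of $\Phi^n(a_n+b(Z-z_0))$. Write $t=b(Z-z_0)$; then $\Phi^n(a_n+t)=\exp\!\big(n\log\Phi(a_n+t)\big)=\exp\!\big(-n\,[1-\Phi(a_n+t)]+o(\cdot)\big)$, and using the Mills-ratio asymptotics $1-\Phi(x)\sim\phi(x)/x$ together with $a_n\to\infty$ and the defining relation $1-\Phi(a_n)=1/n$, one gets $n\,[1-\Phi(a_n+t)]\to e^{-t a_n}\cdot(\text{lower order})$. Since $a_n\to\infty$, this blows up to $+\infty$ when $t<0$ and to $0$ when $t>0$; hence $\Phi^n(a_n+b(Z-z_0))\to \mathbf 1\{Z>z_0\}$ pointwise (the boundary $Z=z_0$ has probability zero). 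By bounded convergence, $\mathbb{E}_Z[\Phi^n(\cdots)]\to\mathbb{P}(Z>z_0)=1-\Phi(\Phi^{-1}(\alpha))=1-\alpha$, and therefore $\mathrm{FWER}_{\mathrm{H_0}}^{\mathrm I}\to 1-(1-\alpha)=\alpha$.

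The main obstacle is making the heuristic $n[1-\Phi(a_n+t)]\to\{0\text{ or }\infty\}$ rigorous uniformly enough to justify the interchange of limit and expectation, and in particular controlling the regime where $Z$ is allowed to grow with $n$ (so that $t=b(Z-z_0)$ is not fixed). One clean way is to bound $\Phi^n(a_n+t)\le 1$ for all $t$ (so bounded convergence applies directly once pointwise convergence is established), and separately to verify the pointwise limit carefully: for fixed $Z$ with $Z>z_0$ use $\Phi^n(a_n+t)\ge (1-1/n\cdot c)^n$-type lower bounds via monotonicity of $a_n+t$ in the relevant direction, and for $Z<z_0$ use $\log\Phi(x)\le -(1-\Phi(x))$ together with the sharp lower bound $1-\Phi(x)\ge \frac{x}{1+x^2}\phi(x)$ to force $n[1-\Phi(a_n+t)]\to\infty$. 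The delicate estimate is quantifying how fast $a_n=\Phi^{-1}(1-1/n)$ diverges — one has $a_n=\sqrt{2\log n}\,(1+o(1))$, and more precisely $a_n^2=2\log n-\log\log n-\log(4\pi)+o(1)$ — and checking that the exponential factor $e^{-t a_n}$ (with $t$ of fixed sign, possibly small) still dominates the polynomial Mills-ratio corrections; this is where the bulk of the careful estimation lies, but it is standard extreme-value-type analysis once set up.
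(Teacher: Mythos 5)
Your proposal is correct and follows essentially the same route as the paper: substitute the cutoff into the representation \eqref{4th} to get $\mathbb{E}_Z\bigl[\Phi^n\bigl(a_n+\sqrt{\rho/(1-\rho)}\,(Z-\Phi^{-1}(\alpha))\bigr)\bigr]$, show the pointwise $0$--$1$ dichotomy for $\Phi^n(a_n+t)$ according to the sign of $t$, and pass to the limit by dominated (bounded) convergence, noting $\mathbb{P}(Z=\Phi^{-1}(\alpha))=0$. The only difference is cosmetic: you derive the dichotomy directly from Mills-ratio asymptotics and the relation $1-\Phi(a_n)=1/n$, whereas the paper's Lemma~\ref{appendixlemma2} obtains it by squeezing against the Gumbel limit $\Phi^n(a_n+b_nx)\to\exp(-e^{-x})$; also, your worry about $Z$ ``growing with $n$'' is moot, since for each fixed realization of $Z$ the argument is pointwise and the uniform bound $\Phi^n\le 1$ already suffices for the interchange.
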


The following result extends \autoref{fwer1} to any configuration of true and false null hypotheses under a limiting condition of the proportion of true null hypotheses.



\begin{theorem}\label{fwer2}
Consider the equicorrelated Gaussian setting with known common correlation $\rho \in (0, 1)$. Then, $$\displaystyle\lim_{n\to\infty}\mathrm{FWER^{I}} (n, \alpha, \rho)=\alpha$$ under any configuration of true and false null hypotheses for which $\displaystyle\lim_{n\to\infty}{n_0}/{n}=p_0$ for some $p_0>0$. 
\end{theorem}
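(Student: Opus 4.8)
The plan is to reduce the general configuration to the global-null computation already performed for Theorem \ref{fwer1}, exploiting that false nulls can only contribute false rejections if their means stay bounded, and that in any case the FWER is governed by the $n_0$ true-null coordinates plus a negligible correction. Write $\mathcal I_0$ for the true-null index set, $|\mathcal I_0| = n_0$, and recall that a false rejection occurs only on $\mathcal I_0$. Thus
\[
\mathrm{FWER}^{\mathrm I}(n,\alpha,\rho) = \mathbb P\!\left(\bigcup_{i\in\mathcal I_0}\{X_i > c_n(\alpha,\rho)\}\right).
\]
Conditioning on the common latent factor $Z\sim N(0,1)$ (so that $X_i = \sqrt\rho\,Z + \sqrt{1-\rho}\,\varepsilon_i$ with $\varepsilon_i$ i.i.d.\ standard normal and, under $H_{0i}$, $\mu_i=0$), the true-null coordinates are conditionally independent, giving the representation
\[
\mathrm{FWER}^{\mathrm I}(n,\alpha,\rho) = 1 - \mathbb E_Z\!\left[\Phi^{n_0}\!\left(\frac{c_n(\alpha,\rho)+\sqrt\rho\,Z}{\sqrt{1-\rho}}\right)\right],
\]
which is exactly the expression in \eqref{4th} with $n$ replaced by $n_0$, but crucially with the cutoff $c_n(\alpha,\rho)$ still calibrated using $n$ rather than $n_0$.

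The core of the argument is then to show that replacing $n$ by $n_0$ in the exponent, while keeping $c_n$ as defined in \eqref{5th}, changes the limit only through the factor $p_0$ in a way that washes out. Substituting \eqref{5th} into the display above, the argument of $\Phi$ becomes $\Phi^{-1}(1-\tfrac1n) - \tfrac{\sqrt\rho}{\sqrt{1-\rho}}\Phi^{-1}(\alpha) + \tfrac{\sqrt\rho}{\sqrt{1-\rho}}Z$. The standard tail asymptotics $1-\Phi(\Phi^{-1}(1-\tfrac1n)) = \tfrac1n$ together with $n_0/n\to p_0$ give, for the relevant range of $Z$,
\[
\Phi^{n_0}\!\left(\frac{c_n(\alpha,\rho)+\sqrt\rho\,Z}{\sqrt{1-\rho}}\right)
= \left(1 - \tfrac1n\,\Psi_n(Z)\right)^{n_0} \longrightarrow \exp\!\left(-p_0\,\Psi_\infty(Z)\right),
\]
where $\Psi_n(Z)$ is the ratio $[1-\Phi(\text{argument})]/[1-\Phi(\Phi^{-1}(1-1/n))]$ and one identifies $\Psi_\infty(Z)$ via the same Mills-ratio / slowly-varying-function manipulation used in the proof of Theorem \ref{fwer1}. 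I would then follow the proof of \ref{fwer1} verbatim to evaluate $\mathbb E_Z[\exp(-p_0\Psi_\infty(Z))]$; since that proof was engineered so that with $p_0=1$ the expectation equals $1-\alpha$, the task is to check that the $p_0$-dependence enters only as an innocuous rescaling inside the $Z$-integral that still integrates to $1-\alpha$ — concretely, that the substitution $\sqrt\rho Z \mapsto \sqrt\rho Z + \sqrt{1-\rho}\log p_0$-type shift (or the corresponding change of variables in whatever integral representation \ref{fwer1} uses) absorbs $p_0$ exactly. Dominated convergence, justified by the uniform bound $\Phi^{n_0}(\cdot)\le 1$ and an integrable envelope on the lower tail of $Z$ exactly as in \ref{fwer1}, then upgrades the pointwise limit to convergence of the expectation.

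One subtlety to handle explicitly: the false-null coordinates $i\notin\mathcal I_0$ with $\mu_i>0$ do not appear in the false-rejection event at all, so no assumption on the magnitudes of the $\mu_i$ is needed — the reduction to $n_0$ coordinates is exact, not asymptotic. The only place the configuration enters is through $n_0/n\to p_0$, and the hypothesis $p_0>0$ is what guarantees $n_0\to\infty$ so that the Poisson-type limit $(1-\Psi_n/n)^{n_0}\to e^{-p_0\Psi_\infty}$ is nondegenerate; if $p_0=0$ one would instead get the limit $1$ inside the expectation and hence $\mathrm{FWER}\to 0$, consistent with the Bonferroni-type vanishing phenomena of Theorems \ref{rev1}--\ref{rev4}.

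I expect the main obstacle to be the bookkeeping in the second paragraph: verifying that the factor $p_0$ threads through the $Z$-integral of Theorem \ref{fwer1}'s proof and cancels to leave $1-\alpha$ exactly, rather than $1-\alpha^{p_0}$ or some other $p_0$-distorted value. This hinges on the precise functional form by which $\Phi^{-1}(1-1/n)$ and $\Phi^{-1}(\alpha)$ combine in \eqref{5th} — the cutoff is designed so that $n\,[1-\Phi(c_n/\sqrt{1-\rho} + \cdots)]$ has a clean limit — and one must confirm that multiplying the exponent by $p_0$ is equivalent to a shift in the cutoff of size $O(1/\sqrt{\log n})$, which is asymptotically negligible relative to the $\sqrt{\log n}$-scale of $c_n$ itself. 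Once that negligibility is established the rest is a routine repeat of the known-$\rho$ global-null proof.
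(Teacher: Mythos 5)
Your reduction is correct and matches the paper's: false rejections live only on $\mathcal{I}_0$, and conditioning on the latent factor gives $\mathrm{FWER^{I}}(n,\alpha,\rho)=1-\mathbb{E}_Z\bigl[\Phi^{n_0}\bigl(a_n+\tfrac{\sqrt{\rho}}{\sqrt{1-\rho}}(Z-\Phi^{-1}(\alpha))\bigr)\bigr]$ with the cutoff still calibrated at $n$. The gap is in the central limit computation. You posit a Poisson-type limit $(1-\Psi_n(Z)/n)^{n_0}\to\exp(-p_0\Psi_\infty(Z))$ with a finite, nondegenerate intensity $\Psi_\infty(Z)$, and then defer as the ``main obstacle'' the verification that the resulting $p_0$ cancels out of the $Z$-integral (versus leaving $1-\alpha^{p_0}$ or similar). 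But no such finite $\Psi_\infty$ exists: writing $t=\tfrac{\sqrt{\rho}}{\sqrt{1-\rho}}(Z-\Phi^{-1}(\alpha))$, the quantity $\Psi_n(Z)=n\,[1-\Phi(a_n+t)]$ tends to $\infty$ for $t<0$ and to $0$ for $t>0$, because a shift of $a_n$ by a fixed constant is of larger order than the Gumbel scaling $b_n\approx 1/a_n$ (it multiplies the tail by roughly $e^{-ta_n}$). Hence the pointwise limit of $\Phi^{n_0}(a_n+t)$ is not a smooth function of $p_0$ whose integral must be checked for cancellation; it is the same indicator $\mathbb{I}\{Z>\Phi^{-1}(\alpha)\}$ as in the global-null case for every $p_0>0$ (only the probability-zero event $t=0$ changes, from $e^{-1}$ to $e^{-p_0}$), so there is nothing to cancel and the proposed ``$\log p_0$-type shift / change of variables'' chases a phantom. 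The paper's proof makes exactly this point with an elementary claim: if $u_n=\Phi^{n}(a_n+t)\to a\in\{0,e^{-1},1\}$ (by \autoref{appendixlemma2}) and $v_n=n_0/n\to p_0>0$, then $\Phi^{n_0}(a_n+t)=u_n^{v_n}\to a^{p_0}$, which is $0$ for $t<0$ and $1$ for $t>0$; dominated convergence then gives the limit $1-\alpha$ verbatim as in \autoref{fwer1}. Since you explicitly leave this decisive step unresolved, the proof as written is incomplete.

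A secondary point: your closing justification --- that replacing the exponent $n$ by $n_0$ amounts to a cutoff shift of size $O(1/\sqrt{\log n})$, negligible ``relative to the $\sqrt{\log n}$-scale of $c_n$'' --- uses the wrong comparison. A shift of constant order is also negligible relative to $\sqrt{\log n}$, yet it would change the limit: it moves the effective threshold on $Z$ and yields a limiting FWER strictly different from $\alpha$. What makes the shift harmless is that it is $o(1)$, so by monotonicity of $\Phi$ it cannot disturb the $0$/$1$ limit of \autoref{appendixlemma2} at any fixed $t\neq 0$. With that criterion (or, more simply, with the paper's $u_n^{v_n}$ argument), your route closes; without it, the key step is missing.
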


\subsection{Unknown $\rho$}

In our equicorrelated normal setup, often the equicorrelation value $\rho$ is unknown. In such scenarios, it is natural to use an estimator $\hat{\rho}$ of $\rho$ (and then use $c_n(\alpha, \hat{\rho})$ as the cutoff instead of $c_n(\alpha, \rho)$). However, we have to ensure that the desired convergence of FWER still holds. The following result provides sufficient conditions for this convergence.

\begin{theorem}\label{fwer3}
    Let $\hat{\rho}_n$ be an estimator of $\rho$ such that $$|c_n(\alpha, \hat{\rho}_n)-c_n(\alpha, \rho)|\longrightarrow0 \quad \text{almost surely as $n\to\infty$}$$ for any configuration of true and false null hypotheses for which $\displaystyle\lim_{n\to\infty}n_0/n>0$. Then, the following holds:
 $$\displaystyle\lim_{n\to\infty}\mathbb{P} \left( \bigcup_{i\in\mathcal{I}_0} \{ X_i > c_n(\alpha, \hat{\rho}_n) \} \right)=\alpha.$$
 In other words, replacing $c_n(\alpha, \rho)$ by $c_n(\alpha, \hat{\rho}_n)$ in \autoref{test1} still ensures the convergence of FWER to $\alpha$.
\end{theorem}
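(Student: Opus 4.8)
The plan is to reduce the statement to \autoref{fwer2} by a sandwiching argument. Since the event $\bigcup_{i \in \mathcal{I}_0}\{X_i > c_n(\alpha,\hat\rho_n)\}$ involves the random cutoff $c_n(\alpha,\hat\rho_n)$, the idea is to trap it between two events with deterministic cutoffs. Fix $\varepsilon > 0$ and define the deterministic perturbed cutoffs $c_n^{\pm}(\varepsilon) := c_n(\alpha,\rho) \pm \varepsilon$. On the almost-sure event $A_{n,\varepsilon} := \{|c_n(\alpha,\hat\rho_n) - c_n(\alpha,\rho)| \le \varepsilon\}$ we have the inclusions
\[
\bigcup_{i\in\mathcal{I}_0}\{X_i > c_n^{+}(\varepsilon)\} \ \subseteq\ \bigcup_{i\in\mathcal{I}_0}\{X_i > c_n(\alpha,\hat\rho_n)\}\ \subseteq\ \bigcup_{i\in\mathcal{I}_0}\{X_i > c_n^{-}(\varepsilon)\}.
\]
Hence, writing $\mathrm{FWER}_n(\hat\rho_n)$ for the probability in the statement,
\[
\mathbb{P}\Bigl(\bigcup_{i\in\mathcal{I}_0}\{X_i > c_n^{+}(\varepsilon)\}\Bigr) - \mathbb{P}(A_{n,\varepsilon}^c) \ \le\ \mathrm{FWER}_n(\hat\rho_n) \ \le\ \mathbb{P}\Bigl(\bigcup_{i\in\mathcal{I}_0}\{X_i > c_n^{-}(\varepsilon)\}\Bigr) + \mathbb{P}(A_{n,\varepsilon}^c).
\]
By hypothesis $c_n(\alpha,\hat\rho_n) \to c_n(\alpha,\rho)$ almost surely, so $\mathbb{P}(A_{n,\varepsilon}^c)\to 0$ as $n\to\infty$ for every fixed $\varepsilon$.

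Next I would compute the limits of the two deterministic-cutoff FWERs. The key observation is that shifting the cutoff $c_n(\alpha,\rho)$ by a constant $\pm\varepsilon$ is equivalent to running \autoref{test1} at a slightly different target level. Concretely, from \eqref{5th}, $c_n(\alpha,\rho)\pm\varepsilon = \sqrt{1-\rho}\,\Phi^{-1}(1-\tfrac1n) - \sqrt{\rho}\,\Phi^{-1}(\alpha) \pm \varepsilon = \sqrt{1-\rho}\,\Phi^{-1}(1-\tfrac1n) - \sqrt{\rho}\,\Phi^{-1}(\alpha^{\mp}_\varepsilon)$, where $\alpha^{\mp}_\varepsilon := \Phi\bigl(\Phi^{-1}(\alpha) \mp \varepsilon/\sqrt{\rho}\bigr)$; note $\alpha^{+}_\varepsilon \downarrow \alpha$ and $\alpha^{-}_\varepsilon \uparrow \alpha$ as $\varepsilon \downarrow 0$, and both lie in $(0,1)$ for $\varepsilon$ small. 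Thus $\mathbb{P}(\bigcup_{i\in\mathcal{I}_0}\{X_i > c_n^{-}(\varepsilon)\}) = \mathrm{FWER}^{\mathrm{I}}(n,\alpha^{+}_\varepsilon,\rho)$ and similarly with $\alpha^{-}_\varepsilon$ for the $+$ side. Since the assumed configuration satisfies $\lim n_0/n = p_0 > 0$, \autoref{fwer2} applies and gives
\[
\lim_{n\to\infty}\mathbb{P}\Bigl(\bigcup_{i\in\mathcal{I}_0}\{X_i > c_n^{-}(\varepsilon)\}\Bigr) = \alpha^{+}_\varepsilon, \qquad \lim_{n\to\infty}\mathbb{P}\Bigl(\bigcup_{i\in\mathcal{I}_0}\{X_i > c_n^{+}(\varepsilon)\}\Bigr) = \alpha^{-}_\varepsilon.
\]
(Strictly, $\lim n_0/n = p_0$ should be assumed; if the hypothesis only gives $\lim\inf n_0/n > 0$ one would pass to subsequences, but I would state the theorem with the limit existing, matching \autoref{fwer2}.)

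Combining, for every small $\varepsilon > 0$,
\[
\alpha^{-}_\varepsilon \ \le\ \liminf_{n\to\infty}\mathrm{FWER}_n(\hat\rho_n) \ \le\ \limsup_{n\to\infty}\mathrm{FWER}_n(\hat\rho_n) \ \le\ \alpha^{+}_\varepsilon.
\]
Letting $\varepsilon \downarrow 0$ and using $\alpha^{\pm}_\varepsilon \to \alpha$ (continuity of $\Phi$ and $\Phi^{-1}$) squeezes both the liminf and the limsup to $\alpha$, which proves the claim.

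I expect the only real subtlety to be bookkeeping around the almost-sure convergence: the hypothesis is stated per configuration, so one must be careful that the event $A_{n,\varepsilon}$ is defined on the same probability space that carries the $X_i$'s and that $\mathbb{P}(A_{n,\varepsilon}^c)\to 0$ follows from almost-sure convergence (it does, since a.s.\ convergence implies convergence in probability, hence $\mathbb{P}(|c_n(\alpha,\hat\rho_n)-c_n(\alpha,\rho)|>\varepsilon)\to 0$). Everything else—the monotonicity of the union in the cutoff, rewriting the shifted cutoff as a changed level, and invoking \autoref{fwer2}—is routine. No delicate estimate of the convergence \emph{rate} of $\hat\rho_n$ is needed; the theorem deliberately only asks for the cutoff difference to vanish, which is exactly what makes the sandwich work.
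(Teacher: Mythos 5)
Your proof is correct and follows essentially the same route as the paper: sandwich the random cutoff $c_n(\alpha,\hat\rho_n)$ between deterministic cutoffs corresponding to perturbed levels (the paper perturbs $\alpha\mapsto\alpha\pm\epsilon$, you equivalently shift the cutoff by $\pm\varepsilon$ and reinterpret it as level $\Phi(\Phi^{-1}(\alpha)\mp\varepsilon/\sqrt{\rho})$), invoke the known-$\rho$ exactness result (\autoref{fwer2}) for the deterministic cutoffs, and squeeze as the perturbation vanishes. Your explicit treatment of the event $A_{n,\varepsilon}$ and of the passage from almost-sure convergence to $\mathbb{P}(A_{n,\varepsilon}^c)\to 0$ is in fact a bit more careful than the paper's terse ``almost surely as $n\to\infty$'' inclusion of events, but it is the same argument.
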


Towards finding such estimator of $\rho$, let $m=\left\lfloor \frac{n}{2} \right\rfloor$. For $1\leq i \leq m$, we define $Y_i=1-\frac{(X_{2i-1}-X_{2i})^2}{2}$. Define $\hat{\rho}_n^{\star}:=\max\{0, \frac{1}{m}\displaystyle\sum_{i=1}^mY_i\}$.

\begin{lemma}\label{lem1}
    Suppose $\frac{n_1 \cdot \sup_i \mu_i^2}{n} \to 0$ as $n \to \infty$. Then, 
    $\hat{\rho}_n^{\star}\longrightarrow\rho$ almost surely as $n \to\infty$. 
    If, moreover, $\frac{\log n \cdot n_1 \cdot \sup_i \mu_i^2}{n} \to 0$ as $n \to \infty$, then $|c_n(\hat{\rho}_n^{\star}, \alpha)-c_n(\alpha, \rho)|\longrightarrow0$ almost surely as $n\to\infty$.
\end{lemma}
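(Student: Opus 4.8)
The plan is to reduce everything to the behaviour of the consecutive differences $D_i := X_{2i-1}-X_{2i}$, $1\le i\le m$, whose crucial feature is that they are \emph{independent} despite the dependence among $X_1,\dots,X_n$. Indeed $(D_1,\dots,D_m)$ is jointly Gaussian and, for $i\neq j$, the common correlation cancels, $\operatorname{Cov}(D_i,D_j)=\rho-\rho-\rho+\rho=0$, so the $D_i$ are mutually independent with $D_i\sim N(\mu_{2i-1}-\mu_{2i},\,2(1-\rho))$. Hence the $Y_i=1-D_i^2/2$ are independent, and the Gaussian moment formulas give $\mathbb{E}[Y_i]=\rho-\tfrac12(\mu_{2i-1}-\mu_{2i})^2$ and $\operatorname{Var}(Y_i)=2(1-\rho)^2+2(1-\rho)(\mu_{2i-1}-\mu_{2i})^2$.

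For the first assertion, write $\bar Y_m=\tfrac1m\sum_{i=1}^m Y_i$ and let $b_m:=\rho-\mathbb{E}[\bar Y_m]=\tfrac1{2m}\sum_{i=1}^m(\mu_{2i-1}-\mu_{2i})^2$. Using $(\mu_{2i-1}-\mu_{2i})^2\le 2\mu_{2i-1}^2+2\mu_{2i}^2$ we get $0\le b_m\le \tfrac1m\sum_{j=1}^n\mu_j^2\le \tfrac1m\,n_1\sup_i\mu_i^2$, so $b_m\to 0$ since $m=\lfloor n/2\rfloor$ and $n_1\sup_i\mu_i^2/n\to0$. The same variance bound makes $\sum_{i\ge1}\operatorname{Var}(Y_i)/i^2$ finite (the constant part trivially; the $(\mu_{2i-1}-\mu_{2i})^2$ part because its partial sums through index $i$ are $O(i)$, by $\sum_{j\le 2i}\mu_j^2=o(i)$, so Kronecker's lemma applies), hence Kolmogorov's strong law for independent summands yields $\bar Y_m-\mathbb{E}[\bar Y_m]\to0$ a.s., whence $\bar Y_m\to\rho$ a.s. Since $\rho>0$ and $x\mapsto\max\{0,x\}$ is continuous, $\hat{\rho}_n^{\star}\to\rho$ a.s.

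For the second assertion, decompose
\[
c_n(\alpha,\hat{\rho}_n^{\star})-c_n(\alpha,\rho)=\big(\sqrt{1-\hat{\rho}_n^{\star}}-\sqrt{1-\rho}\big)\,\Phi^{-1}\!\big(1-\tfrac1n\big)-\big(\sqrt{\hat{\rho}_n^{\star}}-\sqrt\rho\big)\,\Phi^{-1}(\alpha).
\]
The second term $\to0$ a.s.\ because $\hat{\rho}_n^{\star}\to\rho$ and $\Phi^{-1}(\alpha)$ is fixed. For the first, recall $\Phi^{-1}(1-1/n)\sim\sqrt{2\log n}$; since $Y_i\le1$ forces $\hat{\rho}_n^{\star}\in[0,1]$, eventually $\hat{\rho}_n^{\star}\le(1+\rho)/2$ and $|\sqrt{1-\hat{\rho}_n^{\star}}-\sqrt{1-\rho}|\le|\hat{\rho}_n^{\star}-\rho|/\sqrt{1-\rho}$, so it suffices to prove $\sqrt{\log n}\,|\hat{\rho}_n^{\star}-\rho|\to0$ a.s. Split $|\hat{\rho}_n^{\star}-\rho|\le|\bar Y_m-\mathbb{E}[\bar Y_m]|+b_m$. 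The bias contributes $\sqrt{\log n}\,b_m\lesssim \sqrt{\log n}\;n_1\sup_i\mu_i^2/n$, which $\to0$ under the second hypothesis (which, with $\log n$ in place of $\sqrt{\log n}$, leaves room to spare). For the fluctuation, $\bar Y_m-\mathbb{E}[\bar Y_m]$ averages independent centered sub-exponential variables (affine images of non-central $\chi^2_1$'s) whose exponential moments are uniformly controlled on a neighbourhood of $0$ not depending on the means, and whose variances sum to $O(m)$ (from the variance bound and $\sum_{j\le n}\mu_j^2=o(n)$); a Bernstein-type inequality then gives $\mathbb{P}\big(|\bar Y_m-\mathbb{E}[\bar Y_m]|>\varepsilon/\sqrt{\log m}\big)\le 2\exp(-c\,m/\log m)$ for large $m$, which is summable in $m$, so Borel--Cantelli (and $\log m\sim\log n$) yields $\sqrt{\log n}\,|\bar Y_m-\mathbb{E}[\bar Y_m]|\to0$ a.s. Combining the two pieces completes the argument.

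The main obstacle is the almost-sure rate $\sqrt{\log n}\,|\hat{\rho}_n^{\star}-\rho|\to0$: unlike a convergence-in-probability bound, this needs genuine a.s.\ control of the fluctuation, obtained from a sub-exponential concentration inequality together with Borel--Cantelli, and one must check that the non-central $\chi^2$ summands do not inflate the exponential moments in a mean-dependent way (they do not, because the noncentrality enters only the variance, which the mean hypothesis keeps at scale $O(m)$). The covariance cancellation, the moment identities, and the invocation of Kolmogorov's strong law are all routine.
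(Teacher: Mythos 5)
Your proposal is correct, and its core reduction is the same as the paper's: the paired differences $X_{2i-1}-X_{2i}$ kill the common (equicorrelated) component, so the $Y_i$ are independent, and the means $\mu_i$ enter only through $\sum_i(\mu_{2i-1}-\mu_{2i})^2$, which the hypothesis keeps at $o(n)$. Where you diverge is in the bookkeeping and, more substantially, in the rate argument. For consistency, the paper decomposes $Y_i = U_i - V_i - \tfrac12(\mu_{2i-1}-\mu_{2i})^2$ and treats the i.i.d.\ part by the ordinary SLLN, the cross term by a variance-normed SLLN (Petrov, Theorem 6.7), and the bias deterministically; you instead keep $Y_i$ whole, compute $\mathbb{E}[Y_i]$ and $\operatorname{Var}(Y_i)$ for the noncentral quadratic, and apply Kolmogorov's SLLN for independent non-identically distributed summands via the partial-sum bound $\sum_{j\le 2i}\mu_j^2=o(i)$ — an equivalent but slightly more compact organization (your appeal to ``Kronecker'' is really summation by parts, but the fact you need is true). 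For the second assertion, the paper again invokes the weighted SLLN with norming $s_n=\lfloor n/2\rfloor/(2\log n)$ to get the a.s.\ rate $(\log n)\,|\hat{\rho}_n^{\star}-\rho|\to0$, whereas you obtain the weaker but sufficient rate $\sqrt{\log n}\,|\hat{\rho}_n^{\star}-\rho|\to0$ by Bernstein-type concentration for sub-exponential (noncentral $\chi^2$) summands plus Borel--Cantelli; your check that the noncentrality inflates only the variance proxy ($\sum_i v_i = O(m)$ under the hypothesis) and not the scale parameter is exactly the point that makes this work, and the bias term is handled identically in both arguments. The concentration route is self-contained (no weighted-SLLN citation) at the cost of verifying the sub-exponential parameters; the paper's route needs only second moments. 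Either way the conclusion follows since $a_n=\Phi^{-1}(1-1/n)\sim\sqrt{2\log n}$ and the $\Phi^{-1}(\alpha)$ term is handled by plain continuity, as you do.
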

\noindent Using the cutoff $c_n(\alpha, \hat{\rho}_n^{\star})$ in place of $c_n(\alpha, \rho)$ in \autoref{test1} we develop the following:

\begin{test}\label{test2}
     For unknown equicorrelation value $\rho\in(0)$ and a target level $\alpha \in (0, 1)$, we define the quantity
\begin{equation}c_n(\alpha, \hat{\rho}_n^{\star}) :=\sqrt{1-\hat{\rho}_n^{\star}}\Phi^{-1}\left(1-\frac{1}{n}\right)-\sqrt{\hat{\rho}_n^{\star}}  \Phi^{-1}(\alpha). \label{6th}
\end{equation}
For each $i \in \mathcal{I}$, reject $\mathrm{H_{0i}}$ if $X_i>c_{n}(\alpha, \hat{\rho}_n^{\star})$.
\end{test}



\autoref{fwer3} and \autoref{lem1} lead to the following result on the convergence of FWER, which is valid not only under the global null, but also under any configuration of true and false null hypotheses satisfying some limiting conditions on $n_1/n$.

\begin{theorem}\label{fwer4}
Consider the equicorrelated Gaussian setting with unknown common correlation $\rho \in (0, 1)$. Then \autoref{test2} is asymptotically exact, i.e, $$\displaystyle\lim_{n\to\infty}\mathrm{FWER^{II}} (n, \alpha, \rho)=\alpha$$ under any configuration of true and false null hypotheses satisfying $\frac{\log n \cdot n_1 \cdot \sup_i \mu_i^2}{n} \to 0$ as $n \to \infty$.
\end{theorem}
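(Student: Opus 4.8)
The plan is to combine \autoref{fwer3} with \autoref{lem1} in a direct chain of implications. The statement we want is about $\mathrm{FWER^{II}}$, the FWER of \autoref{test2}, which by the definition of FWER (display before \eqref{4th}) equals $\mathbb{P}\left(\bigcup_{i\in\mathcal{I}_0}\{X_i > c_n(\alpha,\hat\rho_n^\star)\}\right)$. So it suffices to verify the hypothesis of \autoref{fwer3} for the specific estimator $\hat\rho_n^\star$, namely that $|c_n(\alpha,\hat\rho_n^\star) - c_n(\alpha,\rho)| \to 0$ almost surely as $n\to\infty$ for any configuration with $\lim_{n\to\infty} n_0/n > 0$.

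First I would note that the configurations allowed in \autoref{fwer4}, i.e. those satisfying $\frac{\log n\cdot n_1\cdot \sup_i\mu_i^2}{n}\to 0$, form a subclass of the configurations for which the second conclusion of \autoref{lem1} applies — that lemma literally asserts, under exactly that condition, that $|c_n(\hat\rho_n^\star,\alpha) - c_n(\alpha,\rho)|\to 0$ almost surely. (Here I would also observe that this condition forces $\frac{n_1\sup_i\mu_i^2}{n}\to 0$, which in turn forces $n_1/n\to 0$ when the alternatives do not degenerate, hence $n_0/n\to 1 > 0$; even in edge cases where some $\mu_i$ vanish one still gets $\liminf n_0/n > 0$, so the ``$\lim n_0/n > 0$'' requirement of \autoref{fwer3} is met. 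If one wants to be careful, the cleanest route is to simply add $n_0/n \to 1$, or $\liminf n_0/n>0$, as part of the reading of the hypothesis, since $\frac{n_1\sup_i\mu_i^2}{n}\to 0$ alone does not bound $n_1/n$ unless $\sup_i\mu_i^2$ is bounded away from $0$; I would flag this and state it as ``together with $n_1/n \to 0$'' or note it is implied under the natural non-degeneracy assumption.) Thus both hypotheses of \autoref{fwer3} — the almost-sure cutoff convergence and $\lim n_0/n > 0$ — hold.

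With those in hand, \autoref{fwer3} immediately gives $\lim_{n\to\infty}\mathbb{P}\left(\bigcup_{i\in\mathcal{I}_0}\{X_i > c_n(\alpha,\hat\rho_n^\star)\}\right) = \alpha$, which is exactly $\lim_{n\to\infty}\mathrm{FWER^{II}}(n,\alpha,\rho) = \alpha$. So the proof is essentially a two-line citation of the two preceding results, and the only real content is checking that the domain of validity in \autoref{lem1} is contained in (or matches) the domain of validity required by \autoref{fwer3}.

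The main obstacle — the only place where anything could go wrong — is the bookkeeping on the true-null proportion: making sure the condition $\frac{\log n\cdot n_1\cdot\sup_i\mu_i^2}{n}\to 0$ genuinely implies the ``$\lim n_0/n > 0$'' premise of \autoref{fwer3}. If $\sup_i\mu_i^2$ can shrink fast enough to let $n_1/n\to 1$, then one would need to invoke the first clause of \autoref{lem1} plus continuity of $\rho\mapsto c_n(\alpha,\rho)$ more carefully, or restrict the statement. I would resolve this either by adding the mild assumption that the nonzero $\mu_i$ are bounded below (a standard ``detectable alternatives'' condition), under which $n_1\sup_i\mu_i^2/n\to 0$ forces $n_1/n\to 0$, or by explicitly writing ``and $\lim_{n\to\infty} n_0/n = p_0 > 0$'' into the hypotheses of \autoref{fwer4} to mirror \autoref{fwer2}. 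Everything else is a direct appeal to the already-established theorems.
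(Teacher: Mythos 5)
Your proposal matches the paper's own route exactly: the paper derives \autoref{fwer4} by citing \autoref{fwer3} together with the second clause of \autoref{lem1}, precisely the two-line chain you give. Your side remark that the hypothesis $\frac{\log n\cdot n_1\cdot\sup_i\mu_i^2}{n}\to 0$ does not by itself guarantee the ``$\lim n_0/n>0$'' premise of \autoref{fwer3} (unless the nonzero means are bounded away from zero) is a fair observation about the paper's statement, and your suggested fix is the natural one.
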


\section{Power Analysis} \label{sec:power}

There exist several notions of Power in the realm of simultaneous statistical inference, e.g., the Disjunctive Power (AnyPwr), the Conjunctive Power (AllPwr), the Average Power (AvgPwr) etc \citep{BretzBook, Dudoit, Grandhi, Ramsey1978}. 
Recent works on limiting multiple testing has focused on the Disjunctive power. 
This is defined as the probability of rejecting at least one false null, i.e., \begin{equation}\mathrm{AnyPwr_T}\coloneqq\mathbb{P}(S_n(T)\geq 1).\end{equation}
\cite{deybhandari} proved the following results regarding the asymptotic power of Bonferroni's  procedure:

\begin{theorem}\label{power_old_1}
    Consider the equicorrelated normal setup with equicorrelation $\rho \in (0, 1)$. 
Let $\sup \mu_i$ be finite. Then, for any $\alpha \in (0, 1)$,
\[
\lim_{n \to \infty} \mathrm{AnyPwr_{Bon}}(n, \alpha, \Sigma_n) = 0.
\]
\end{theorem}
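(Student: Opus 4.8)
The plan is to pass to the one-factor representation of the equicorrelated Gaussian vector and condition on the common factor. Write $X_i = \mu_i + \sqrt{\rho}\,Z + \sqrt{1-\rho}\,\varepsilon_i$ with $Z,\varepsilon_1,\dots,\varepsilon_n$ i.i.d.\ $N(0,1)$, put $M := \sup_i \mu_i < \infty$, let $\mathcal{I}_1$ be the index set of false nulls with $n_1 := |\mathcal{I}_1| \le n$, and set $c_n := c_{Bon}(n,\alpha) = \Phi^{-1}(1-\alpha/n)$ and $\overline{\Phi}:=1-\Phi$. The first step is to record the standard Gaussian-quantile asymptotics $c_n \to \infty$ and $c_n^2 = 2\log n\,(1+o(1))$ — in fact $c_n^2 = 2\log n - O(\log\log n)$ — which follows from $\overline{\Phi}(c_n)=\alpha/n$ together with the Mills-ratio estimate $\overline{\Phi}(t)\asymp \phi(t)/t$ as $t\to\infty$.

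Next, conditioning on $Z=z$ and applying Boole's inequality to the conditionally independent events $\{X_i>c_n\}$, $i\in\mathcal{I}_1$, together with $\mu_i\le M$, gives
\[
\mathbb{P}\bigl(S_n\ge 1 \mid Z=z\bigr)
= \mathbb{P}\Bigl(\textstyle\bigcup_{i\in\mathcal{I}_1}\{X_i>c_n\}\,\Big|\,Z=z\Bigr)
\;\le\; n_1\,\overline{\Phi}\!\left(\frac{c_n - M - \sqrt{\rho}\,z}{\sqrt{1-\rho}}\right).
\]
For fixed $z$ the argument $t_n(z):=(c_n-M-\sqrt{\rho}\,z)/\sqrt{1-\rho}$ is eventually positive and tends to $+\infty$, so the bound $\overline{\Phi}(t)\le (t\sqrt{2\pi})^{-1}e^{-t^2/2}$ applies; expanding the square, discarding the nonpositive term $-(M+\sqrt{\rho}z)^2/(2(1-\rho))$, and inserting $c_n^2 = 2\log n\,(1+o(1))$ yields
\[
n_1\,\overline{\Phi}(t_n(z))
\;\le\; C\, n\cdot n^{-\frac{1+o(1)}{1-\rho}}\,\exp\!\left(\tfrac{M+\sqrt{\rho}\,z}{1-\rho}\,c_n\right)
\;=\; n^{\,1-\frac{1}{1-\rho}+o(1)}
\;=\; n^{-\frac{\rho}{1-\rho}+o(1)}\;\longrightarrow\;0,
\]
where the cross term contributes only the subpolynomial factor $\exp(O(\sqrt{\log n}))=n^{o(1)}$ because $c_n=O(\sqrt{\log n})$, and the exponent $1-\tfrac{1}{1-\rho}=-\tfrac{\rho}{1-\rho}$ is strictly negative precisely because $\rho>0$.

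Finally, since $\mathbb{P}(S_n\ge 1\mid Z=z)\le 1$ for every $z$ and we have shown pointwise convergence to $0$ in $z$, dominated convergence with respect to the fixed law of $Z$ gives
\[
\mathrm{AnyPwr_{Bon}}(n,\alpha,\Sigma_n)=\mathbb{E}_Z\bigl[\mathbb{P}(S_n\ge 1\mid Z)\bigr]\longrightarrow 0,
\]
as claimed. The main obstacle is the rate bookkeeping in the second display: one must verify that the upward shift of the conditional means by $M$ and by $\sqrt{\rho}\,z$ inflates $\overline{\Phi}(t_n(z))$ only by a factor $e^{O(\sqrt{\log n})}=n^{o(1)}$, which is overwhelmed by the polynomial gain $n^{-1/(1-\rho)}$ produced jointly by $c_n^2\sim 2\log n$ and $1-\rho<1$. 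It is worth flagging that conditioning on $Z$ is essential here: a union bound applied directly to the unconditional events $\{X_i>c_n\}$ throws away the positive dependence and only delivers $n_1\,\overline{\Phi}(c_n-M)\asymp \alpha\,e^{c_n M}$, which does not vanish, so the heart of the argument is that the shared factor forces the rejection events to cluster rather than accumulate.
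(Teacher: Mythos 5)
Your argument is correct: the one-factor decomposition, the conditional union bound $\mathbb{P}(\bigcup_{i\in\mathcal{I}_1}\{X_i>c_n\}\mid Z=z)\le n_1\,\overline{\Phi}\bigl((c_n-M-\sqrt{\rho}z)/\sqrt{1-\rho}\bigr)$, the Mills-ratio bookkeeping showing this is $n^{-\rho/(1-\rho)+o(1)}\to 0$ pointwise in $z$, and the final dominated-convergence step are all sound, and your closing remark that the unconditional union bound is too lossy (it only gives $\alpha e^{Mc_n}n_1/n$) correctly identifies why conditioning on the shared factor is the essential move. Note, however, that this theorem is not proved in the present paper: it is quoted from \cite{deybhandari} as background, so there is no in-paper proof to match. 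Compared with the machinery this paper deploys for its analogous statements (e.g.\ \autoref{power_new_1}, \autoref{power_new_2}, \autoref{fwer1}), your route is different in flavor: the paper works with the exact conditional product representation $1-\mathbb{E}_Z\bigl[\prod_{i}\Phi\bigl((c_n+\sqrt{\rho}Z-\mu_i)/\sqrt{1-\rho}\bigr)\bigr]$ and then invokes extreme-value-type limit lemmas (\autoref{appendixlemma2}, \autoref{Phi^dn}) to compute exact limits of quantities like $\Phi^{d_n}(a_n+t)$, whereas you replace the product by a conditional Boole bound plus a Gaussian tail estimate. Your approach is more elementary and entirely sufficient for a zero-limit statement, since an upper bound tending to $0$ settles it; the paper's sharper product-plus-limit-lemma technique is what one needs when the limit is a nontrivial constant (as in \autoref{fwer1} or the power bounds of \autoref{power_new_1}), where a union bound would be too crude. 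One small point of care if you polish this: the $o(1)$ in your exponent depends on $z$ through the cross term $e^{c_n(M+\sqrt{\rho}z)/(1-\rho)}$, which is fine for the pointwise-in-$z$ convergence you need before applying dominated convergence, but you should say explicitly (as you implicitly do) that the convergence is claimed pointwise for each fixed $z$, not uniformly.
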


\begin{theorem}\label{power_old_2}
Consider the equicorrelated normal setup with equicorrelation $\rho \in (0, 1)$. Suppose $n_1 \to \infty$ and $n_1/n \to p_1 \in (0, 1]$ as $n \to \infty$. Then, for any $\alpha \in (0, 1)$,
\[
\lim_{n \to \infty} \mathrm{AnyPwr_{Bon}}(n, \alpha, \Sigma_n) = 1, \hspace{1mm} \text{if}  \frac{\sqrt{2 \log n_1}}{\mu_{n_1}} \to 0 \quad \text{as } n_1 \to \infty.
\]
\end{theorem}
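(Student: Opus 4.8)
The plan is to show that the complementary \emph{miss probability}---the chance that not a single false null is rejected---tends to $0$. Writing $\mathcal{I}_1:=\mathcal{I}\setminus\mathcal{I}_0$ and using the one-factor representation $X_i=\mu_i+\sqrt{\rho}\,Z+\sqrt{1-\rho}\,\varepsilon_i$ with $Z,\varepsilon_1,\dots,\varepsilon_n$ i.i.d.\ $N(0,1)$ (the same conditioning device that yields \eqref{4th}), and recalling that Bonferroni uses the common cutoff $c_{Bon}(n,\alpha)=\Phi^{-1}(1-\alpha/n)$, one gets
$$
1-\mathrm{AnyPwr_{Bon}}(n,\alpha,\Sigma_n)=\mathbb{P}\!\left(\bigcap_{i\in\mathcal{I}_1}\{X_i\le c_{Bon}(n,\alpha)\}\right)=\mathbb{E}_Z\!\left[\prod_{i\in\mathcal{I}_1}\Phi\!\left(\frac{c_{Bon}(n,\alpha)-\mu_i-\sqrt{\rho}\,Z}{\sqrt{1-\rho}}\right)\right],
$$
so it suffices to prove this expectation vanishes as $n\to\infty$.

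Next I would collect the two elementary facts that drive the estimate. First, by the standard Gaussian tail (Mills-ratio) asymptotics, $c_{Bon}(n,\alpha)=\Phi^{-1}(1-\alpha/n)\sim\sqrt{2\log n}$. Second, the hypothesis $n_1/n\to p_1\in(0,1]$ forces $\log n_1=\log n+O(1)$, hence $\sqrt{2\log n_1}\sim\sqrt{2\log n}\sim c_{Bon}(n,\alpha)$; combining this with $\sqrt{2\log n_1}/\mu_{n_1}\to 0$ (here $\mu_{n_1}$ denotes, in the notation of the cited work, the smallest false-null mean, so $\mu_i\ge\mu_{n_1}$ for every $i\in\mathcal{I}_1$) gives $c_{Bon}(n,\alpha)/\mu_{n_1}\to 0$ and therefore $c_{Bon}(n,\alpha)-\mu_{n_1}\to-\infty$.

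To finish, I would truncate the $Z$-integral at a fixed level $-M$, $M>0$. On $\{Z\ge-M\}$, monotonicity of $\Phi$ together with $\mu_i\ge\mu_{n_1}$ bounds every factor by $\Phi\!\big((c_{Bon}(n,\alpha)-\mu_{n_1}+\sqrt{\rho}M)/\sqrt{1-\rho}\big)=:b_{n,M}$, and by the previous paragraph $b_{n,M}\to 0$ as $n\to\infty$ for each fixed $M$. Using the trivial bound $\prod\Phi(\cdot)\le 1$ on $\{Z<-M\}$,
$$
\mathbb{E}_Z\!\left[\prod_{i\in\mathcal{I}_1}\Phi\!\left(\frac{c_{Bon}(n,\alpha)-\mu_i-\sqrt{\rho}\,Z}{\sqrt{1-\rho}}\right)\right]\le\mathbb{P}(Z<-M)+b_{n,M}^{\,n_1}\le\Phi(-M)+b_{n,M}^{\,n_1}.
$$
Letting $n\to\infty$ yields $\limsup_n\bigl(1-\mathrm{AnyPwr_{Bon}}\bigr)\le\Phi(-M)$, and then $M\to\infty$ gives $\limsup_n\bigl(1-\mathrm{AnyPwr_{Bon}}\bigr)=0$, i.e.\ $\mathrm{AnyPwr_{Bon}}(n,\alpha,\Sigma_n)\to 1$.

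The main obstacle is the interplay between the integration over the common factor $Z$ and the growing number $n_1$ of false nulls: one cannot push the limit inside the expectation, and the large-$|Z|$ tail must be handled separately from the bulk. The ``fix $M$, then send $M\to\infty$'' device above disposes of this cleanly; the only remaining care is the precise quantile asymptotics for $c_{Bon}(n,\alpha)$ and the bookkeeping that $\log n_1$ and $\log n$ are interchangeable up to $O(1)$, which is exactly where the assumption $n_1/n\to p_1>0$ is used.
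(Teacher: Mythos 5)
Your argument is correct. Note first that the paper does not actually prove this statement; it is quoted from the cited prior work, so the only in-paper benchmark is the proof of the analogous result for the new procedure (\autoref{power_new_2}), and your route is essentially that one: condition on the shared factor $Z$ to write the miss probability as $\mathbb{E}_Z$ of a product of conditional $\Phi$-terms, use $c_{Bon}(n,\alpha)\sim\sqrt{2\log n}$ together with $\log n_1=\log n+O(1)$ (this is where $n_1/n\to p_1>0$ enters), and conclude that the relevant $\Phi$-argument drifts to $-\infty$. The only differences are cosmetic: the paper bounds the whole product by a \emph{single} factor (at a suitably chosen mean $\mu_{n^\star}$) and then passes the limit inside the expectation by dominated convergence (the integrand is bounded by $1$), whereas you keep the power $b_{n,M}^{\,n_1}$ and handle the expectation by truncating $Z$ at $-M$ and letting $M\to\infty$ — a hands-on substitute for DCT; your remark that ``one cannot push the limit inside the expectation'' is therefore not quite right, though your truncation device is perfectly valid. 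One small caveat: your step $\mu_i\ge\mu_{n_1}$ for all false nulls rests on reading $\mu_{n_1}$ as a lower bound on the false-null means, which you state explicitly; if $\mu_{n_1}$ were instead only the mean of one designated false null, the fix is immediate — bound the product by that single factor, exactly as the paper does in the proof of \autoref{power_new_2} — so the conclusion is unaffected.
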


 The next two results illustrate the superior performance of our suggested MTP with respect to power. 

\begin{theorem}\label{power_new_1}
Consider the equicorrelated normal setup with equicorrelation $\rho \in (0, 1)$.  Let $\sup \mu_i$ be finite. If $\frac{d_n}{n}\cdot l^{\sqrt{2 \log n}} \to \infty$ as $n \to \infty$ for all $l >1$ then, \[\Phi \left(\Phi^{-1}(\alpha)+\frac{\displaystyle\inf_{i\in\mathcal{I}\setminus \mathcal{I}_0}\mu_i}{\sqrt{\rho}}\right) \leq 
\lim_{n \to \infty} \mathrm{AnyPwr_{I}}(n, \alpha, \Sigma_n) \leq  \left(\Phi^{-1}(\alpha)+\frac{\sup \mu_i}{\sqrt{\rho}}\right) .
\]
\end{theorem}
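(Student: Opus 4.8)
The plan is to condition on the common shock. Write $X_i = \mu_i + \sqrt{\rho}\,Z + \sqrt{1-\rho}\,\varepsilon_i$ with $Z,\varepsilon_1,\dots,\varepsilon_n$ i.i.d.\ $N(0,1)$; then the false nulls are conditionally independent given $Z$, so
\[
1 - \mathrm{AnyPwr_I}(n,\alpha,\Sigma_n) = \mathbb{E}_Z\!\left[\prod_{i\in\mathcal{I}\setminus\mathcal{I}_0}\Phi\!\left(\frac{c_n(\alpha,\rho)-\mu_i-\sqrt{\rho}\,Z}{\sqrt{1-\rho}}\right)\right].
\]
Substituting \eqref{5th}, the argument of the $i$-th factor simplifies to $a_n - b_i(Z)$, where $a_n:=\Phi^{-1}(1-1/n)$ and $b_i(z):=\{\sqrt{\rho}\,(\Phi^{-1}(\alpha)+z)+\mu_i\}/\sqrt{1-\rho}$. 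For fixed $z$ the numbers $b_i(z)$ lie in a compact set (since $\sup_i\mu_i<\infty$), and for a false null $i$ one has $b_i(z)\in[\beta(z),B(z)]$, where $\beta(z)$ and $B(z)$ arise on replacing $\mu_i$ by $\inf_{i\in\mathcal{I}\setminus\mathcal{I}_0}\mu_i$ and by $\sup_i\mu_i$, respectively.

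Next I would pin the product down through Gaussian tail asymptotics (write $\bar\Phi:=1-\Phi$). Since $\bar\Phi(a_n)=1/n$ and $a_n\sim\sqrt{2\log n}$, Mills' ratio gives, uniformly over $b$ in compact sets, $\bar\Phi(a_n-b)=\tfrac1n e^{a_n b-b^2/2}(1+o(1))$, and in particular $\max_i\bar\Phi(a_n-b_i(z))\to0$ for each fixed $z$. Combining this with $\log\Phi(t)=-\bar\Phi(t)+O(\bar\Phi(t)^2)$ and $\sum_i\bar\Phi(t_i)^2\le(\max_i\bar\Phi(t_i))\sum_i\bar\Phi(t_i)$ yields
\[
\prod_{i\in\mathcal{I}\setminus\mathcal{I}_0}\Phi(a_n-b_i(z))=\exp\!\left(-(1+o(1))\,\tfrac1n\!\!\sum_{i\in\mathcal{I}\setminus\mathcal{I}_0}\!\! e^{a_n b_i(z)-b_i(z)^2/2}\right),
\]
and, since $e^{a_n b}$ is increasing in $b$ while $e^{-b^2/2}$ is bounded away from $0$ and $\infty$ on the relevant range, the sum in the exponent is squeezed between constant multiples of $\tfrac{d_n}{n}e^{a_n\beta(z)}$ and $\tfrac{d_n}{n}e^{a_n B(z)}$ (with $d_n$ the number of false nulls).

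The final step is a dichotomy in $z$. If $\beta(z)>0$ — equivalently $z>-\Phi^{-1}(\alpha)-\inf_{i\in\mathcal{I}\setminus\mathcal{I}_0}\mu_i/\sqrt{\rho}$ — then, using $a_n\sim\sqrt{2\log n}$ together with the hypothesis $\tfrac{d_n}{n}\,l^{\sqrt{2\log n}}\to\infty$ applied with $l=e^{\beta(z)}>1$, the exponent tends to $-\infty$, so the product tends to $0$. If $B(z)<0$ — equivalently $z<-\Phi^{-1}(\alpha)-\sup_i\mu_i/\sqrt{\rho}$ — then, since $d_n/n\le1$, the exponent is $o(1)$ (bounded in absolute value by $(1+o(1))e^{a_n B(z)}\to0$), so the product tends to $1$; the leftover band of $z$ is a fixed bounded interval. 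Applying Fatou's lemma and its reverse form (legitimate as the integrand is bounded by $1$) to $\mathbb{E}_Z[\cdot]$ then squeezes $1-\mathrm{AnyPwr_I}(n,\alpha,\Sigma_n)$ asymptotically between $\Phi(-\Phi^{-1}(\alpha)-\sup_i\mu_i/\sqrt{\rho})$ and $\Phi(-\Phi^{-1}(\alpha)-\inf_{i\in\mathcal{I}\setminus\mathcal{I}_0}\mu_i/\sqrt{\rho})$; taking complements via $\Phi(-x)=1-\Phi(x)$ gives the asserted bounds on $\liminf$ and $\limsup$ of $\mathrm{AnyPwr_I}$.

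The main obstacle is the uniform bookkeeping in the middle step: the Mills-ratio expansion must hold uniformly in $i$ so that the $o(1)$ can be pulled outside the sum over the $d_n$ false nulls, and the replacement of $a_n$ by $\sqrt{2\log n}$ inside $e^{a_n\theta}$ must cost only a factor $1+o(1)$, for which one needs the sharper expansion $a_n=\sqrt{2\log n}-\{\log\log n+\log4\pi\}/(2\sqrt{2\log n})+o(1/\sqrt{\log n})$ so that $a_n\theta-\theta\sqrt{2\log n}\to0$ for bounded $\theta$. The conditioning identity, the complementation, the Fatou sandwich, and the observation that the growth condition on $d_n$ is invoked only for the lower bound are all routine.
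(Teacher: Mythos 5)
Your proposal is correct and follows essentially the same route as the paper: condition on the common factor, reduce the conditional non-rejection probability to a product of normal c.d.f.'s evaluated near $a_n=\Phi^{-1}(1-1/n)$, use the Mills-ratio/$a_n$ expansion together with the growth condition $\frac{d_n}{n}\,l^{\sqrt{2\log n}}\to\infty$ (needed only for the lower bound) to obtain the pointwise $0$--$1$ dichotomy in $Z$, and finish with a bounded-convergence (Fatou) argument. The only difference is organizational: the paper first replaces each $\mu_i$ by $\inf\mu_i$ or $\sup\mu_i$ so the product becomes $\Phi^{n_1}$ of a common argument and then invokes its Propositions on $\lim_n\Phi^{d_n}(a_n+t)$, whereas you keep the heterogeneous product, expand its logarithm, and sandwich in the exponent, thereby re-deriving the content of those propositions inline.
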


\begin{theorem}\label{power_new_2}
Consider the equicorrelated normal setup with equicorrelation $\rho \in (0, 1)$. Suppose $n_1\to\infty$ and $n_1/n \to p_1 \in (0, 1]$ as $n \to \infty$. Then, for any $\alpha \in (0, 1)$, 
\[
\lim_{n \to \infty} \mathrm{AnyPwr_{I}}(n, \alpha, \Sigma_n) = 1, \hspace{2mm} if \hspace{2mm} \frac{\sqrt{2 \log n_1}}{\mu_{n_1}} < \frac{1}{\sqrt{1-\rho}} \hspace{2mm} as \hspace{2mm} n_1\to \infty.
\]
\end{theorem}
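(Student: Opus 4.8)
The plan is to bound $\mathrm{AnyPwr_{I}}$ from below by the probability that the largest false-null statistic clears the cutoff, and to evaluate that probability via the equicorrelated factor representation together with classical Gaussian maximum asymptotics. Writing $\mathcal{I}_1 = \mathcal{I}\setminus\mathcal{I}_0$ for the index set of false nulls, we have $\mathrm{AnyPwr_{I}}(n,\alpha,\Sigma_n) = \mathbb{P}\big(\max_{i\in\mathcal{I}_1} X_i > c_n(\alpha,\rho)\big)$. Using the representation $X_i = \mu_i + \sqrt{\rho}\,Z + \sqrt{1-\rho}\,\varepsilon_i$ with $Z,\varepsilon_1,\dots,\varepsilon_n$ i.i.d.\ $N(0,1)$, factoring the common term $\sqrt{\rho}\,Z$ out of the maximum, and replacing each $\mu_i$ by the lower bound $\mu_{n_1}$ over $i\in\mathcal{I}_1$, I would obtain
\[
\mathrm{AnyPwr_{I}}(n,\alpha,\Sigma_n)\ \ge\ \mathbb{P}\!\left(\mu_{n_1} + \sqrt{\rho}\,Z + \sqrt{1-\rho}\,\max_{i\in\mathcal{I}_1}\varepsilon_i\ >\ c_n(\alpha,\rho)\right).
\]

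Next I would substitute $c_n(\alpha,\rho) = \sqrt{1-\rho}\,\Phi^{-1}(1-\tfrac1n) - \sqrt{\rho}\,\Phi^{-1}(\alpha)$ and divide the defining inequality by $\sqrt{1-\rho}$, so the event becomes
\[
\max_{i\in\mathcal{I}_1}\varepsilon_i - \Phi^{-1}\!\Big(1-\tfrac1n\Big)\ >\ -\,\frac{\mu_{n_1} + \sqrt{\rho}\,\Phi^{-1}(\alpha) + \sqrt{\rho}\,Z}{\sqrt{1-\rho}}.
\]
For the left-hand side I would invoke the classical fact that the maximum of $n_1$ i.i.d.\ $N(0,1)$ variables equals $\Phi^{-1}(1-\tfrac1{n_1}) + o_{\mathbb{P}}(1)$ (in fact $\sqrt{2\log n_1}$ times the centered maximum is Gumbel in the limit); since $n_1/n\to p_1>0$ forces $\log n_1 = \log n + O(1)$, the quantile expansion $\Phi^{-1}(1-\tfrac1n) = \sqrt{2\log n} - \tfrac{\log\log n + \log 4\pi}{2\sqrt{2\log n}} + o(\tfrac1{\sqrt{\log n}})$ applied to both $n$ and $n_1$ gives $\Phi^{-1}(1-\tfrac1{n_1}) - \Phi^{-1}(1-\tfrac1n)\to 0$, so the whole left-hand side is $o_{\mathbb{P}}(1)$. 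For the right-hand side, the hypothesis $\sqrt{2\log n_1}/\mu_{n_1} < 1/\sqrt{1-\rho}$ says precisely that $\mu_{n_1}/\sqrt{1-\rho} > \sqrt{2\log n_1}\to\infty$, so, $Z$ being $O_{\mathbb{P}}(1)$ and $\alpha$ fixed, the right-hand side tends to $-\infty$ in probability. An $o_{\mathbb{P}}(1)$ quantity exceeds a quantity diverging to $-\infty$ with probability tending to one, whence $\mathrm{AnyPwr_{I}}(n,\alpha,\Sigma_n)\to 1$.

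The step I expect to be the real obstacle is matching the two $\sqrt{2\log n}$-scale quantities $\max_{i\in\mathcal{I}_1}\varepsilon_i$ and $\Phi^{-1}(1-\tfrac1n)$: one must verify their difference is genuinely $o_{\mathbb{P}}(1)$ and not $O_{\mathbb{P}}(\sqrt{\log n})$, and it is exactly here that $n_1/n\to p_1>0$ earns its keep, forcing $\log n_1$ and $\log n$ to agree up to $O(1)$ so that the $\sqrt{2\log n_1}$ and $\sqrt{2\log n}$ leading terms cancel to $o(1)$ while the residual Gumbel-scale fluctuation is only $O_{\mathbb{P}}(1/\sqrt{\log n})$; after that one merely checks that $\mu_{n_1}$, which the hypothesis pushes to infinity, swamps the remaining $O_{\mathbb{P}}(1)$ terms and the fixed constants. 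If one instead reads the hypothesis with a strict margin, i.e.\ $\limsup_{n_1\to\infty}\sqrt{2\log n_1}/\mu_{n_1} < 1/\sqrt{1-\rho}$, the argument shortens considerably: then $\mu_{n_1} - c_n(\alpha,\rho)\to+\infty$ directly from the quantile asymptotics, and the first display may be replaced by the cruder single--false-null bound $\mathrm{AnyPwr_{I}}(n,\alpha,\Sigma_n) \ge \Phi\big(\mu_{n_1} - c_n(\alpha,\rho)\big)$, which then tends to $1$ on its own.
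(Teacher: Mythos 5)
Your main argument is correct, but it takes a genuinely different route from the paper, and your closing remark essentially contains the paper's own proof. The paper's proof is the crude single-coordinate reduction: it writes $1-\mathrm{AnyPwr_{I}} = \mathbb{E}\big[\prod_{i\in\mathcal{I}\setminus\mathcal{I}_0}\Phi\big((c_n(\alpha,\rho)+\sqrt{\rho}\,Z-\mu_i)/\sqrt{1-\rho}\big)\big]$, drops all factors but one to get the bound $\mathbb{E}\big[\Phi\big((\sqrt{1-\rho}\,a_n+\sqrt{\rho}\,Z-\mu_{n^\star})/\sqrt{1-\rho}\big)\big]$, and lets $a_n\sim\sqrt{2\log n}$ together with the hypothesis drive that single factor to $0$; as you note, this needs the hypothesis read with a strict asymptotic margin so that $\mu_{n_1}-\sqrt{1-\rho}\,a_n\to\infty$, and your fallback bound $\mathrm{AnyPwr_{I}}\ge\Phi\big(\mu_{n_1}-c_n(\alpha,\rho)\big)$ is exactly this argument. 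Your primary route instead keeps all $n_1$ false nulls, factors out the common term $\sqrt{\rho}\,Z$, and matches $\max_{i\in\mathcal{I}_1}\varepsilon_i$ against $\Phi^{-1}(1-1/n)$ using the Gumbel-scale concentration of the normal maximum plus $\Phi^{-1}(1-1/n_1)-\Phi^{-1}(1-1/n)\to0$ (this is where $n_1/n\to p_1>0$ is really used); after that cancellation only $\mu_{n_1}\to\infty$ is needed, which the stated inequality supplies. This buys something the paper's argument does not: it works under the literal, non-strict reading of the hypothesis and shows that, given $n_1/n\to p_1>0$, divergence of the false-null means alone already gives consistency, which makes transparent why the condition here is weaker than the Bonferroni condition $\sqrt{2\log n_1}/\mu_{n_1}\to0$. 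One caveat: your first display replaces every $\mu_i$, $i\in\mathcal{I}_1$, by $\mu_{n_1}$, which is legitimate only if $\mu_{n_1}$ is a uniform lower bound for the false-null means (the minimum, or all alternative means equal). The paper never defines this notation, and its own proof only requires that some particular false null has mean $\mu_{n_1}$ (e.g.\ the largest under an increasing ordering); under that reading your max-based display is not a valid lower bound, and you would have to retreat to your single-coordinate remark, which, with the strict-margin reading of the hypothesis, coincides with the paper's proof.
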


\autoref{power_new_1} ensures that our proposed MTP has asymptotically strictly positive power even when the Bonferroni method has limiting zero power. \autoref{power_new_2} suggests that the condition for consistency is much weaker for our proposal, compared to that of the Bonferroni method.

\section{Extension to Block-Equicorrelation structures}\label{sec:block}

We have focused on equicorrelation structures so far. However, observations often exhibit more complex dependence structures. For example, the block-equicorrelated covariance structure has been used to explicitly model within-replicate and between-replicate correlations of observations from genome-wide data ~\citep{zhu_2007}. We consider such a scenario in this section. Suppose $(X_{1}, X_{2}, \ldots, X_{n})$ have the following covariance structure:

$$\Sigma_{n}:=\left(\begin{array}{cccc}
M_{k_1}(\rho_{1}) & O_{12} & \cdots & O_{1m}\\
O_{21} & M_{k_2}(\rho_{2}) & \cdots & \cdots\\
\vdots& \vdots&  \vdots&  \vdots \\
O_{m1} & \cdots &\cdots & M_{k_m}(\rho_{m})\end{array}\right).$$
Here $k_{j}$'s are positive integers and $\displaystyle\sum_{j=1}^{m}k_{j}=n$. Also, $M_{r}(\rho)$ denotes the $r \times r$ matrix with diagonal entries equal to $1$, off-diagonal entries equal to $\rho$ and $O_{j_{1}j_{2}}$ denotes the zero matrix of order $k_{j_{1}}\times k_{j_{2}}$. In summary, $\Sigma_{n}$ represents a correlation structure having precisely $m$ blocks with $k_{j}$'s being the respective block sizes. Suppose $(\mathcal{I}_1, \ldots, \mathcal{I}_m)$ denotes the corresponding partition of $\mathcal{I}$. In other words, for $1 \leq i\neq i^{\prime} \leq n$,
    $$\operatorname{Corr}(X_i, X_{i^{\prime}}) = \begin{cases}
        \rho_j & if \quad i, i^{\prime} \in \mathcal{I}_j\\
        0 & if \quad i\in \mathcal{I}_r, i^{\prime}\in \mathcal{I}_s \textit{ and } r\neq s.
    \end{cases}$$
For $1 \leq j \leq m$, let 
$$\mathcal{I}_{j0}:= \{i \in \mathcal{I}_j: \mathrm{H_{0i}} \hspace{2mm} \text{is true}\}$$
and suppose $k_{j0}:=|\mathcal{I}_{j0}|$. Hence, $n_0=\displaystyle\sum_{j=1}^m{k_{j0}}$.

We propose the following single-step procedure for asymptotic FWER control under this scenario:
\begin{test}\label{test3}
    Consider the correlated Gaussian sequence model with covariance matrix $\Sigma_n$. For a target level $\alpha \in (0, 1)$, suppose $\beta = 1 - (1-\alpha)^{1/m}$. For each $j \in \{1, \ldots, m\}$, 
     $$\text{for each $i \in \mathcal{I}_j$, reject $\mathrm{H_{0i}}$ if $X_{i} > c_{k_j}(\rho_j, \beta)$.}$$
\end{test}

\noindent The FWER of this procedure is given as 
$$\mathrm{{FWER}^{III}}(n, \alpha, \Sigma_n) = \mathbb{P}\left(\bigcup_{j=1}^m \bigcup_{i \in \mathcal{I}_{j0}} \{X_i >c_{k_j}(\rho_j, \beta)\}\right).$$

Similar to the extension of \autoref{fwer1} for \autoref{test1}, here also, we work for general configuration of true and false nulls with some reasonable assumptions as in \autoref{fwer2} with respect to this block-equicorrelated correlation structure.

\begin{theorem}\label{block}
     Suppose $\displaystyle\min_{j} k_j \to \infty$ as $n \to \infty$. Then, for any configuration of true and false null hypotheses satisfying $\displaystyle\min_j\lim_{n\to\infty}\frac{k_{j_0}}{k_j}>0$, \autoref{test3} is asymptotically exact, i.e., \[\displaystyle\lim_{n\to\infty}\mathrm{FWER^{III}}(n, \alpha, \Sigma_n)=\alpha.\]
\end{theorem}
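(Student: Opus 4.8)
The plan is to exploit the block–diagonal structure of $\Sigma_n$ so as to reduce the assertion to $m$ separate applications of \autoref{fwer2}. Since all off–diagonal blocks of $\Sigma_n$ vanish and $(X_1,\dots,X_n)$ is jointly Gaussian, the sub-vectors $(X_i)_{i\in\mathcal{I}_1},\dots,(X_i)_{i\in\mathcal{I}_m}$ are mutually independent. Hence the events
\[
E_j:=\bigcup_{i\in\mathcal{I}_{j0}}\{X_i>c_{k_j}(\rho_j,\beta)\},\qquad j=1,\dots,m,
\]
are independent, so that
\[
\mathrm{FWER}^{III}(n,\alpha,\Sigma_n)=1-\prod_{j=1}^m\mathbb{P}(E_j^c)=1-\prod_{j=1}^m\bigl(1-\mathbb{P}(E_j)\bigr),
\]
where $m$ is held fixed while only the block sizes $k_j=k_j(n)$ vary.

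Next I would identify each factor. For fixed $j$, the sub-vector $(X_i)_{i\in\mathcal{I}_j}$ is exactly an equicorrelated Gaussian vector of dimension $k_j$ with common correlation $\rho_j$ in which precisely $k_{j0}$ coordinates correspond to true nulls, and $c_{k_j}(\rho_j,\beta)$ is exactly the cutoff \eqref{5th} associated with a problem of size $k_j$, target level $\beta$, and correlation $\rho_j$. Therefore $\mathbb{P}(E_j)$ equals $\mathrm{FWER}^{I}(k_j,\beta,\rho_j)$ evaluated under the configuration that has $k_{j0}$ true nulls out of $k_j$. The hypothesis $\min_j k_j\to\infty$ forces $k_j\to\infty$ for every $j$, and $\min_j\lim_{n}k_{j0}/k_j>0$ gives $\lim_{n}k_{j0}/k_j=p_{j0}$ for some $p_{j0}>0$. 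These are precisely the hypotheses of \autoref{fwer2} (reading $k_j$ for $n$, $\beta$ for $\alpha$, $\rho_j$ for $\rho$, and $k_{j0}$ for $n_0$), so $\mathbb{P}(E_j)\to\beta$ as $n\to\infty$ for each $j$.

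Finally, since $m$ is fixed, the finite product passes to the limit coordinatewise, and the choice $\beta=1-(1-\alpha)^{1/m}$ gives
\[
\lim_{n\to\infty}\mathrm{FWER}^{III}(n,\alpha,\Sigma_n)=1-\prod_{j=1}^m(1-\beta)=1-(1-\beta)^m=1-(1-\alpha)=\alpha,
\]
which is the claim. The only genuinely delicate point is the middle step: one must check that the induced within-block configuration satisfies the positive-proportion-of-true-nulls requirement of \autoref{fwer2} and that $k_j(n)\to\infty$ — both guaranteed by the stated hypotheses — and then invoke \autoref{fwer2} block by block. (If one wished to permit the number of blocks to grow, $m=m(n)\to\infty$, this reduction would instead require a quantitative, rate-uniform version of \autoref{fwer2} so that the cumulative error across the growing product remains negligible; with $m$ fixed no such uniformity is needed.)
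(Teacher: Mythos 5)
Your proposal is correct and follows essentially the same route as the paper: both factor the no-false-rejection event over the independent blocks (thanks to the zero off-diagonal blocks and joint Gaussianity), apply \autoref{fwer2} within each block at level $\beta$ with correlation $\rho_j$ using $\min_j k_j\to\infty$ and the positive true-null proportion, and then use $\beta=1-(1-\alpha)^{1/m}$ with the finite product to conclude the limit is $\alpha$. Your explicit verification of the hypotheses of \autoref{fwer2} per block, and the remark about what would be needed if $m$ were allowed to grow, are sound additions but do not change the argument.
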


    \autoref{block} can be extended to even more general correlation structures, using a famous probability inequality due to \cite{Slepian}. Towards this, suppose $(X_{1}, X_{2}, \ldots, X_{n})$ have the following covariance structure:

$$R_{n}:=\left(\begin{array}{cccc}
M_{k_1}(\rho_{1}) & A_{12} & \cdots & A_{1m}\\
A_{21} & M_{k_2}(\rho_{2}) & \cdots & \cdots\\
\vdots& \vdots&  \vdots&  \vdots \\
A_{m1} & \cdots &\cdots & M_{k_m}(\rho_{m})\end{array}\right)$$
where each $A_{j_1 j_2}$ contains non-negative entries. Slepian's inequality leads to the following: 

\begin{corollary}\label{block2}
     Suppose $\displaystyle\min_{j} k_j \to \infty$ as $n \to \infty$. Then, for any configuration of true and false null hypotheses satisfying $\displaystyle\min_j\lim_{n\to\infty}\frac{k_{j_0}}{k_j}>0$, \autoref{test3} asymptotically controls FWER, i.e., \[\displaystyle\lim_{n\to\infty}\mathrm{FWER^{III}}(n, \alpha, R_n) \leq \alpha.\]
\end{corollary}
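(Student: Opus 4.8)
The plan is to deduce the upper bound from Theorem \ref{block} via a stochastic comparison. Let $X = (X_1, \ldots, X_n)$ be the Gaussian vector with correlation matrix $R_n$ (and mean vector given by the true/false null configuration), and let $\widetilde{X} = (\widetilde{X}_1, \ldots, \widetilde{X}_n)$ be the Gaussian vector with the \emph{same} mean vector but correlation matrix $\Sigma_n$, i.e. the block-diagonal matrix obtained from $R_n$ by zeroing out every off-diagonal block $A_{j_1 j_2}$. The key observation is that $R_n$ and $\Sigma_n$ agree on the diagonal (both have $1$'s) and within each diagonal block (both equal $M_{k_j}(\rho_j)$), and differ only in the off-diagonal blocks, where $R_n$ has entries $\geq 0$ and $\Sigma_n$ has entries $= 0$. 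Hence $(R_n)_{ii} = (\Sigma_n)_{ii}$ for all $i$ and $(R_n)_{ik} \geq (\Sigma_n)_{ik}$ for all $i \neq k$. This is exactly the hypothesis of Slepian's inequality.

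The main steps, in order, are as follows. First, I would note that $\mathrm{FWER}^{III}(n,\alpha,R_n) = \mathbb{P}\!\left(\bigcup_{j=1}^m \bigcup_{i \in \mathcal{I}_{j0}} \{X_i > c_{k_j}(\rho_j,\beta)\}\right) = 1 - \mathbb{P}\!\left(\bigcap_{j=1}^m \bigcap_{i \in \mathcal{I}_{j0}} \{X_i \leq c_{k_j}(\rho_j,\beta)\}\right)$, and similarly for $\widetilde{X}$ under $\Sigma_n$. Second, I would apply Slepian's inequality to the event $\bigcap_{i \in \mathcal{I}_0}\{X_i \leq c_i\}$, where $c_i := c_{k_j}(\rho_j,\beta)$ for $i \in \mathcal{I}_{j0}$: since the relevant correlations satisfy $(R_n)_{ik} \geq (\Sigma_n)_{ik}$ (with equality on the diagonal), Slepian gives
\[
\mathbb{P}\!\left(\bigcap_{i \in \mathcal{I}_0}\{X_i \leq c_i\}\right) \;\geq\; \mathbb{P}\!\left(\bigcap_{i \in \mathcal{I}_0}\{\widetilde{X}_i \leq c_i\}\right),
\]
hence $\mathrm{FWER}^{III}(n,\alpha,R_n) \leq \mathrm{FWER}^{III}(n,\alpha,\Sigma_n)$ for every $n$. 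Third, I would take $n \to \infty$ on both sides and invoke Theorem \ref{block}, which under the stated assumptions $\min_j k_j \to \infty$ and $\min_j \lim_n k_{j0}/k_j > 0$ gives $\lim_{n\to\infty}\mathrm{FWER}^{III}(n,\alpha,\Sigma_n) = \alpha$. Combining, $\limsup_{n\to\infty}\mathrm{FWER}^{III}(n,\alpha,R_n) \leq \alpha$, which is the claim.

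The only genuine subtlety — and the one place I would be careful — is the precise form of Slepian's inequality being invoked: the classical statement compares $\mathbb{P}(X_i \leq c_i \ \forall i)$ for two centered Gaussian vectors with equal variances and dominating covariances. Here the vectors are not centered under alternatives, but the mean vectors are identical, so writing $X_i = \mu_i + G_i$ and $\widetilde{X}_i = \mu_i + \widetilde{G}_i$ reduces the comparison to the centered vectors $G, \widetilde{G}$ with the event $\{G_i \leq c_i - \mu_i \ \forall i\}$, to which Slepian applies directly; the non-negativity of the entries of each $A_{j_1 j_2}$ is exactly what guarantees the covariance-dominance hypothesis. One should also check the boundary/variance normalization (all variances are $1$ here, so this is immediate) and that the index set in the intersection is $\mathcal{I}_0 = \bigcup_j \mathcal{I}_{j0}$, not all of $\mathcal{I}$ — but Slepian's monotonicity holds for any sub-collection of coordinates, so restricting to $\mathcal{I}_0$ is harmless. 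No new estimates are needed beyond what Theorem \ref{block} already supplies.
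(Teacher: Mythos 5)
Your proposal is correct and follows exactly the route the paper intends: the paper gives no separate proof of this corollary beyond the remark that Slepian's inequality, applied to compare $R_n$ with the block-diagonal $\Sigma_n$ (equal diagonals and within-block entries, non-negative off-block entries dominating zeros), yields $\mathrm{FWER^{III}}(n,\alpha,R_n)\leq \mathrm{FWER^{III}}(n,\alpha,\Sigma_n)$, after which Theorem \ref{block} gives the limit $\alpha$. Your handling of the non-centered means, the restriction to the sub-vector indexed by $\mathcal{I}_0$, and the replacement of $\lim$ by $\limsup$ are all sound and consistent with the paper's argument.
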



\section{Extension to $k$-FWER}\label{sec:kFWER}

In practice, one is often willing to tolerate a few false rejections. Hence, controlling $k$ or more false rejections can potentially improve the ability of a procedure to detect more false null hypotheses \citep{deybhandaristpa}. \cite{GuoRao2010} remark that $k$-FWER can be regarded as a good complement to the FWER and FDR in many applications. 

Suppose we rearrange the random variables $X_1, X_2, \ldots, X_n$ into a nondecreasing sequence
$$
X_{1: n} \leqslant X_{2: n} \leqslant \cdots \leqslant X_{n: n} .
$$
We call $X_{n-r+1: n}$ the $r$-th highest order statistic. For fixed $r$, as $n \to \infty$, we call $X_{n-r+1: n}$ \textit{the $r$-th extreme}.

One observes that the $k$-FWER of procedure \ref{test1} is as follows: 
\begin{equation}\label{k-FWER}
    k\text{-}\mathrm{FWER_{H_0}^{I}}(n, \alpha, \rho) = 1 - \mathbb{P}(X_{n-k+1: n} \leq c_n(\alpha, \rho))
\end{equation}
where $c_n(\alpha, \rho)$ is defined as in \eqref{5th}. Thus, the limiting distribution of the $k$-th extreme is crucial to obtain the limit of $k$-FWER. The following result enunciates that the limiting $k$-FWER of \autoref{test1} is $\alpha$.

\begin{theorem}\label{kfwer1}
   Consider the equicorrelated Gaussian setting with common correlation $\rho \in (0, 1)$. Also suppose that $\displaystyle\lim_{n\to\infty}\frac{n_0}{n}>0$. Then, \autoref{test1} is asymptotically exact w.r.t $k$-FWER control, i.e.,
   \[ \displaystyle\lim_{n\to\infty}k\text{-}\mathrm{FWER^{I}}(n, \alpha, \rho)=\alpha.\]
\end{theorem}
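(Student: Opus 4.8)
The plan is to condition on the latent common factor. Write $X_i=\sqrt{\rho}\,Z+\sqrt{1-\rho}\,\varepsilon_i$ with $Z,\varepsilon_1,\dots,\varepsilon_n$ i.i.d.\ $N(0,1)$; since $k$-FWER counts only false rejections, we may restrict attention to the true nulls $i\in\mathcal{I}_0$, where $X_i\sim N(0,1)$ marginally and, conditionally on $Z$, the $X_i$ are i.i.d.\ $N(\sqrt{\rho}\,Z,\,1-\rho)$. Hence the number of false rejections $V_n$ is, conditionally on $Z$, distributed as $\mathrm{Binomial}(n_0,p_n(Z))$ with
\[
p_n(Z)=1-\Phi\!\left(\frac{c_n(\alpha,\rho)-\sqrt{\rho}\,Z}{\sqrt{1-\rho}}\right)=1-\Phi\!\left(b_n-\frac{\sqrt{\rho}}{\sqrt{1-\rho}}\,(Z+z_\alpha)\right),
\]
where $b_n:=\Phi^{-1}(1-1/n)$ and $z_\alpha:=\Phi^{-1}(\alpha)$, using $c_n(\alpha,\rho)$ from \eqref{5th}; note $1-\Phi(b_n)=1/n$ exactly. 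Thus (extending \eqref{k-FWER} to a general configuration) $k\text{-}\mathrm{FWER^{I}}(n,\alpha,\rho)=\mathbb{E}_Z\!\left[\mathbb{P}\big(\mathrm{Binomial}(n_0,p_n(Z))\ge k\big)\right]$, and the whole problem reduces to the asymptotics of this conditional Binomial tail for fixed $k$.

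The crux is a $0$--$\infty$ dichotomy for the conditional mean $n_0 p_n(Z)$. Put $t=t(Z):=\frac{\sqrt{\rho}}{\sqrt{1-\rho}}(Z+z_\alpha)$. Using $1-\Phi(b_n)=1/n$ together with the Mills-ratio asymptotic $1-\Phi(x)\sim\phi(x)/x$ as $x\to\infty$, I would show
\[
n_0\,p_n(Z)=\frac{n_0}{n}\cdot\frac{1-\Phi(b_n-t)}{1-\Phi(b_n)}\;\sim\;\frac{n_0}{n}\cdot\frac{b_n}{b_n-t}\,\exp\!\left(b_n t-\tfrac{t^2}{2}\right).
\]
Since $n_0/n\to p_0\in(0,1]$ and $b_n\to\infty$, this tends to $+\infty$ when $t>0$ (i.e.\ $Z>-z_\alpha$) and to $0$ when $t<0$ (i.e.\ $Z<-z_\alpha$); moreover $p_n(Z)\to0$ for each fixed $Z$. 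Reading off the conditional tail: on $\{Z<-z_\alpha\}$, $\mathbb{P}(\mathrm{Binomial}(n_0,p_n)\ge k)\le\binom{n_0}{k}p_n^k\le (n_0p_n)^k/k!\to0$; on $\{Z>-z_\alpha\}$, each of the $k$ terms of $\mathbb{P}(\mathrm{Binomial}(n_0,p_n)\le k-1)=\sum_{j=0}^{k-1}\binom{n_0}{j}p_n^j(1-p_n)^{n_0-j}$ is at most $\tfrac{1}{j!}(n_0p_n)^j e^{-(n_0-j)p_n}\to0$, so $\mathbb{P}(\mathrm{Binomial}(n_0,p_n)\ge k)\to1$. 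Hence $\mathbb{P}(\mathrm{Binomial}(n_0,p_n(Z))\ge k)\to\mathbbm{1}\{Z>-z_\alpha\}$ for a.e.\ $Z$, the boundary event $\{Z=-z_\alpha\}$ being null.

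Finally, since the conditional probabilities lie in $[0,1]$, the dominated convergence theorem gives
\[
\lim_{n\to\infty}k\text{-}\mathrm{FWER^{I}}(n,\alpha,\rho)=\mathbb{P}(Z>-z_\alpha)=\Phi(z_\alpha)=\alpha,
\]
as claimed. I expect the main obstacle to be the bookkeeping in the middle step: replacing the $\sim$ in the Mills-ratio display by explicit two-sided bounds valid for all large $n$, and making the Binomial-tail estimates precise enough on $\{Z<-z_\alpha\}$ and $\{Z>-z_\alpha\}$ (for $k>1$ the $k$ lower-tail terms must be handled simultaneously) to legitimately pass to the indicator limit. Everything else is routine; in particular this argument simultaneously recovers \autoref{fwer1} and \autoref{fwer2} as the case $k=1$.
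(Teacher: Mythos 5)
Your proof is correct, and it takes a genuinely different route from the paper. The paper's argument conditions on the latent factor and then invokes extreme value theory: Theorem 2.8.1 of Galambos on the limiting law of the $k$-th extreme is used to extend Lemma~\ref{appendixlemma2} to $\Phi_{n-k+1:n}(a_n+t)$ (Proposition~\ref{prop1}), after which the $k$-FWER is written as $1-\mathbb{P}\bigl(Z_{n-k+1:n}\le a_n+\tfrac{\sqrt{\rho}}{\sqrt{1-\rho}}(Z-\Phi^{-1}(\alpha))\bigr)$ and the dominated convergence theorem finishes, exactly as in Theorem~\ref{fwer1}. You instead represent the conditional count of false rejections as $\mathrm{Binomial}(n_0,p_n(Z))$ and prove a $0$--$\infty$ dichotomy for the conditional mean $n_0p_n(Z)$ via the exact identity $1-\Phi(b_n)=1/n$ and the Mills ratio, then control the Binomial upper and lower tails by elementary bounds ($\binom{n_0}{k}p_n^k\le (n_0p_n)^k/k!$ and $\binom{n_0}{j}p_n^j(1-p_n)^{n_0-j}\le (n_0p_n)^j e^{-(n_0-j)p_n}/j!$), ending with the same DCT step; all of these estimates check out, and the remaining work you flag (two-sided Mills-ratio bounds, handling the $k$ lower-tail terms) is routine. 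What your route buys: it avoids the order-statistics machinery entirely, treats $k=1$ and general fixed $k$ uniformly (so it indeed recovers Theorems~\ref{fwer1} and~\ref{fwer2}), and handles a general configuration with $n_0/n\to p_0>0$ directly, whereas the paper's displayed computation is written under the global null with the general case deferred to the argument of Theorem~\ref{fwer2}. What the paper's route buys: it reuses the already-established Fisher--Tippett/Galambos framework of the earlier sections, yields the exact boundary constant $e^{-1}\sum_{i=0}^{k-1}1/i!$ at $t=0$ as a by-product, and keeps the proof structurally parallel to that of Theorem~\ref{fwer1}.
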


The above result discusses the asymptotic of $k$-FWER for fixed $k\in \mathbb{N}$. However, in many scenarios, one may opt for even more rejections by allowing $k$ to be a function of $n$, i. e., $k=k_n$  \citep{deybhandari}. The corresponding asymptotics of $k_n$-FWER naturally depends on the order of $k_n$. The next result comes up with a order of $\{k_n\}_{n\geq1}$ for which $k_n$-FWER is controlled by \autoref{test1}.

\begin{theorem}\label{kfwer2}
Consider the equicorrelated Gaussian setting with common correlation $\rho \in (0, 1)$. Suppose $\{k_n\}_{n\geq1}$ be a sequence for which $n_0-k_n=o\left(\frac{n}{l^{\sqrt{\log n}}}\right)$ for all $l>1$. Then, \autoref{test1} controls $k_n$-FWER asymptotically at $0$, i. e.,
\[ \displaystyle\lim_{n\to\infty}k_n\text{-}\mathrm{FWER^{I}}(n, \alpha, \rho)=0.\]
\end{theorem}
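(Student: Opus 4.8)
The plan is to work directly with the representation \eqref{k-FWER}, i.e.
\[
k_n\text{-}\mathrm{FWER^{I}}(n, \alpha, \rho) = 1 - \mathbb{P}\bigl(X_{n-k_n+1:n} \leq c_n(\alpha, \rho)\bigr),
\]
but restricted to the true-null coordinates (which is what matters for the false-rejection count), and show that the probability on the right-hand side tends to $1$. Equivalently, writing $V_n$ for the number of true-null coordinates exceeding $c_n(\alpha,\rho)$, I must show $\mathbb{P}(V_n \geq k_n) \to 0$. As in the proof of \autoref{fwer1}, I would condition on the latent variable $Z \sim N(0,1)$ driving the equicorrelation: conditionally on $Z$, the variables $\{X_i : i \in \mathcal{I}_0\}$ are i.i.d.\ $N(\sqrt{\rho}\,Z,\, 1-\rho)$, so $V_n \mid Z \sim \mathrm{Bin}(n_0, q_n(Z))$ where
\[
q_n(Z) = \Phi\!\left(\frac{\sqrt{\rho}\,Z - c_n(\alpha,\rho)}{\sqrt{1-\rho}}\right)
= \Phi\!\left(Z\sqrt{\tfrac{\rho}{1-\rho}} - \Phi^{-1}\!\bigl(1-\tfrac1n\bigr) + \Phi^{-1}(\alpha)\sqrt{\tfrac{\rho}{1-\rho}}\right),
\]
using the explicit form \eqref{5th} of the cutoff. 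The key point is that $\Phi^{-1}(1-1/n) \sim \sqrt{2\log n}$, so $q_n(Z)$ is typically of order $e^{-\Theta(\log n)} = n^{-\Theta(1)}$; more precisely, the Mills-ratio tail bound gives $q_n(z) \leq C(z)\, n\, \phi\bigl(\sqrt{2\log n} + O(1)\bigr)/\sqrt{\log n}$, which for fixed $z$ behaves like $n^{-1+o(1)}$, and one has the uniform-in-$z$-on-compacts estimate $n_0 \, q_n(z) = O\!\left(n / l^{\sqrt{\log n}}\right)$ for every $l>1$ (absorbing the $\sqrt{\rho/(1-\rho)}$ factor in the exponent into the freedom of choosing $l$).

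The main estimate is then a union/Markov bound split according to whether $Z$ is in a slowly-growing compact set or not. First, take a truncation level $M_n = o(\sqrt{\log n})$ growing slowly (say $M_n = (\log n)^{1/4}$); then $\mathbb{P}(|Z| > M_n)$ is super-polynomially small and certainly $\to 0$. On the event $\{|Z| \leq M_n\}$, I bound $\mathbb{P}(V_n \geq k_n \mid Z) \leq \mathbb{P}(V_n \geq 1 \mid Z) + \mathbb{P}(V_n - (n_0 - k_n) \geq k_n - (n_0-k_n)\ \text{?})$ — actually cleaner: since $k_n \leq n_0$ and $n_0 - k_n = o(n / l^{\sqrt{\log n}})$, the hypothesis says $k_n = n_0 - o(n/l^{\sqrt{\log n}})$, i.e.\ $k_n$ is very close to $n_0$. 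So $\{V_n \geq k_n\} = \{n_0 - V_n \leq n_0 - k_n\}$; letting $W_n := n_0 - V_n$ be the number of true nulls \emph{not} exceeding the cutoff, $W_n \mid Z \sim \mathrm{Bin}(n_0, 1-q_n(Z))$, and I need $\mathbb{P}(W_n \leq n_0 - k_n) \to 0$. Since $\mathbb{E}[W_n \mid Z] = n_0(1 - q_n(Z))$, which is $n_0(1 - o(1))$ while the threshold $n_0 - k_n = o(n/l^{\sqrt{\log n}}) = o(n_0)$, this is a lower-tail large-deviation event for a Binomial whose mean is a $(1-o(1))$-fraction of $n_0$, hit by a threshold of order $o(n_0)$. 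A Chernoff bound for the left tail of $\mathrm{Bin}(n_0, 1-q_n)$ — e.g.\ $\mathbb{P}(W_n \leq t) \leq \exp(-n_0 D(t/n_0 \,\|\, 1-q_n))$ with $D$ the binary KL divergence — gives a bound like $\exp(-c n_0)$ for the relevant range, hence $\to 0$; one then integrates over $|Z| \leq M_n$ (the bound being uniform there after plugging the uniform estimate on $q_n$) and adds the negligible tail contribution.

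The hard part will be making the two competing rates interact correctly: one needs the Binomial left-tail bound to hold \emph{uniformly} over $|Z| \leq M_n$, which requires that $q_n(z)$ stay bounded away from $1$ (indeed $q_n(z) \to 0$) uniformly on that window — this is where the slow growth $M_n = o(\sqrt{\log n})$ is essential, so that $Z\sqrt{\rho/(1-\rho)} - \sqrt{2\log n}(1+o(1)) \to -\infty$ uniformly and $q_n(z)$ is uniformly small. I would isolate this as a preliminary uniform estimate: $\sup_{|z|\leq M_n} q_n(z) = o(1)$ and moreover $n_0 \sup_{|z| \leq M_n} q_n(z) = o\bigl(n/l^{\sqrt{\log n}}\bigr)$ for every $l>1$, the latter being exactly the place where the peculiar rate $o(n/l^{\sqrt{\log n}})$ in the hypothesis is calibrated to match the Gaussian tail $\phi(\sqrt{2\log n})$. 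Everything else — the conditioning on $Z$, the Chernoff bound, the truncation — is routine once that uniform estimate is in hand. A remark worth adding is that the same argument, stopped one step earlier (bounding $\mathbb{P}(V_n \geq 1 \mid Z)$ directly), re-proves that the ordinary FWER of \autoref{test1} would go to $0$ if one used cutoff $c_{Bon}$, consistent with \autoref{rev1}; the gain here is precisely that allowing $k_n$ within $o(n/l^{\sqrt{\log n}})$ of $n_0$ still leaves the relevant Binomial tail exponentially small.
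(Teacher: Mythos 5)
Your route (condition on the latent factor, treat the false--rejection count as a Binomial, and kill its upper tail with a Chernoff/KL bound after truncating $Z$ to a window $|z|\le M_n$) is genuinely different from the paper's argument, and it does work in the principal regime; but as written it has a real gap. The theorem's hypothesis constrains only $n_0-k_n$ relative to $n$, while your argument silently upgrades $n_0-k_n=o\bigl(n/l^{\sqrt{\log n}}\bigr)$ to $o(n_0)$ ("mean is a $(1-o(1))$-fraction of $n_0$, threshold is $o(n_0)$, hence $\exp(-cn_0)$"). That step needs $n_0\asymp n$, which --- unlike \autoref{kfwer1} --- is not assumed here. For instance, $n_0\approx n\,e^{-(\log n)^{3/5}}$ with $k_n\equiv 1$ satisfies the hypothesis (any such $n_0-k_n$ is $o(n/l^{\sqrt{\log n}})$ for every $l>1$), yet then $k_n/n_0\not\to1$, the left-tail/KL framing collapses, and what you actually need is $n_0\,q_n(z)\to0$; this fails \emph{uniformly} on your fixed window $M_n=(\log n)^{1/4}$, because there $q_n(z)$ can be as large as $e^{c(\log n)^{3/4}}/n$, and $e^{c(\log n)^{3/4}}$ dominates every $l^{\sqrt{\log n}}$. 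The "for all $l>1$" calibration in the hypothesis matches the factor $e^{t\sqrt{2\log n}}$ only for each \emph{fixed} $z$, not uniformly over a window growing like $(\log n)^{1/4}$; so either the window must be chosen adaptively (slowly enough, depending on the configuration), or --- more cleanly --- the truncation should be dropped in favour of pointwise-in-$z$ convergence plus dominated convergence.

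That pointwise-plus-DCT route is exactly what the paper does, and it bypasses Binomial tail bounds entirely: if at least $k_n$ true nulls are rejected, then at least one exceedance occurs among \emph{any} fixed $n_0-k_n+1$ of them, so conditionally on $Z$ one gets
\begin{equation*}
k_n\text{-}\mathrm{FWER^{I}}(n,\alpha,\rho)\;\le\;1-\mathbb{E}_Z\!\left[\Phi^{\,n_0-k_n}\!\left(a_n+\tfrac{\sqrt{\rho}}{\sqrt{1-\rho}}\bigl(Z-\Phi^{-1}(\alpha)\bigr)\right)\right],
\end{equation*}
and \autoref{Phi^dn}(1) with $d_n=n_0-k_n$ gives convergence of the integrand to $1$ for every fixed value of $Z$ (this is precisely where "for all $l>1$" is consumed), after which DCT finishes; no relation between $n_0$ and $n$ and no uniform estimates are needed. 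Your Chernoff argument is a valid (indeed stronger-than-necessary) alternative whenever $n_0/n\to p_0>0$, since then the hypothesis does force $k_n/n_0\to1$; to prove the theorem as stated you should either add that assumption explicitly or replace the uniform-window step by the pointwise argument above. (Minor side remarks: with $M_n=(\log n)^{1/4}$ the tail $\mathbb{P}(|Z|>M_n)\approx e^{-\sqrt{\log n}/2}$ is not super-polynomially small, though it does vanish, which is all you use; and your closing claim about recovering \autoref{rev1} concerns the Bonferroni cutoff, not the cutoff \eqref{5th}, for which $\mathbb{P}(V_n\ge1)\to\alpha$ rather than $0$.)
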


\section{Simulation Study}\label{sec:sim_study}

\subsection{Known $\rho$}
The simulation scheme that we are following to compute FWER values under $H_0$ is similar to that in \cite{deybhandari}.

As mentioned earlier, under $H_0$, $X_i = \theta + Z_i$ where $\theta \sim N(0, \rho)$, independent of $\{Z_n\}_{n \geq 1}$ and $Z_i \overset{\text{iid}}{\sim} N(0, 1-\rho)$. This gives
\begin{align*}
\mathrm{FWER_{H_0}^I}(n, \alpha, \rho) &= \mathbb{P}_{\mathrm{H_0}} \left( \bigcup_{i=1}^{n} \{Z_i + \theta > c_n(\alpha, \rho) \} \right)\\
&= \mathbb{P}_{\mathrm{H_0}} \left( \max_{1 \leq i \leq n} Z_i > c_n(\alpha, \rho) - \theta \right) \\
&= \mathbb{P}_{\mathrm{H_0}} \left( \max_{1 \leq i \leq n} W_i > \frac{c_n(\alpha, \rho) + \sqrt{\rho} \gamma}{\sqrt{1 - \rho}}\right) \quad(\text{here $W_i = Z_i/\sqrt{1-\rho}$,$\theta = -\sqrt{\rho}\gamma$)} \\
&= \mathbb{P}_{\mathrm{H_0}} \left( U > \frac{c_n(\alpha, \rho) + \sqrt{\rho} \gamma}{\sqrt{1 - \rho}}\right) \quad \text{(say)} \\
&= \mathbb{E}_\gamma \left[ \mathbb{I} \left\{U > C_{\gamma, \rho} \right\} \right]
\end{align*}
where $\mathbb{I}\{A\}$ is the indicator variable of event $A$ and $C_{\gamma, \rho}$ is the quantity it is replacing. Note that $\gamma \sim N(0, 1)$ is independent of $U$ under $H_0$.

The above derivation provides us with an elegant and computationally less expensive simulation scheme for estimating $\mathrm{FWER}$ given $(n, \rho)$ and desired $\alpha$ \citep{deybhandari}. Firstly, we generate 100, 000 independent observations from $N(0, 1)$ (these are the $\gamma$ variables, i.e., the repetitions). Given $\rho$, we compute the cutoff $C_{\gamma_i, \rho}$ for each of the simulated $\gamma_i$'s, $1 \leq i \leq 100000$. Given $n$, for each $\gamma_i$, we generate $1$ independent observation from cdf $\Phi^n$, $U_i$. We note for how many $i$'s, $U_i$ exceeds the cutoff $C_{\gamma_i, \rho}$. An estimate of $\mathrm{FWER^I}(n, \alpha, \rho)$ is obtained accordingly from the 100, 000 repetitions.

Tables \ref{tab1}-\ref{tab4} present the estimates of FWER under the equicorrelated normal setup for some specific values of $(n, \alpha, \rho)$. Evidently, the FWER under global null is decreasing to the target $\alpha$ values for each case as $n$ grows, while for each $n$ the values decrease as $\rho$ increases. Also, the convergence of FWER towards $\alpha$ is much faster for higher values of $\rho$.

\renewcommand{\arraystretch}{1.2} 
\setlength{\tabcolsep}{10pt} 

\begin{table}[H]
\centering
\captionsetup{skip=6.5pt} 
\caption{Estimates of $\mathrm{FWER^{I}}$ for \(\alpha=0.15\)}
\begin{tabularx}{\textwidth}{l|*{4}{>{\centering\arraybackslash}X|}>{\centering\arraybackslash}X}
\hline
\textbf{\(\)} & \(\rho=0.1\) & \(\rho=0.3\) & \(\rho=0.5\) & \(\rho=0.7\) & \(\rho=0.9\) \\
\hline
\(n=10^5\)       & 0.28962 & 0.21452 & 0.18832 & 0.17333 & 0.15934 \\
\(n=10^6\)   & 0.27451 & 0.20666 & 0.18595 & 0.16903 & 0.16220 \\
\(n=10^7\)  & 0.26408 & 0.20615 & 0.17941 & 0.17016 & 0.15893 \\
\(n=10^8\) & 0.25683 & 0.19816 & 0.17827 & 0.16727 & 0.15984 \\
\(n=10^9\) & 0.24858 & 0.19308 & 0.17489 & 0.16758 & 0.15672 \\
\hline
\end{tabularx}\label{tab1}
\end{table}

\vspace{5pt} 

\begin{table}[H]
\centering
\captionsetup{skip=6.5pt} 
\caption{Estimates of $\mathrm{FWER^{I}}$ for \(\alpha=0.10\)}
\begin{tabularx}{\textwidth}{l|*{4}{>{\centering\arraybackslash}X|}>{\centering\arraybackslash}X}
\hline
\textbf{\(\)} & \(\rho=0.1\) & \(\rho=0.3\) & \(\rho=0.5\) & \(\rho=0.7\) & \(\rho=0.9\) \\
\hline
\(n=10^5\) & 0.22970 & 0.15726 & 0.13247 & 0.11740 & 0.10814 \\
\(n=10^6\) & 0.21290 & 0.14909 & 0.12761 & 0.11701 & 0.10554 \\
\(n=10^7\) & 0.20621 & 0.14240 & 0.12532 & 0.11484 & 0.10702 \\
\(n=10^8\) & 0.19780 & 0.14016 & 0.12377 & 0.11231 & 0.10618 \\
\(n=10^9\) & 0.18983 & 0.13928 & 0.12076 & 0.11420 & 0.10436 \\
\hline
\end{tabularx}\label{tab2}
\end{table}

\vspace{5pt} 

\begin{table}[H]
\centering
\captionsetup{skip=6.5pt} 
\caption{Estimates of $\mathrm{FWER^{I}}$ for \(\alpha=0.05\)}
\begin{tabularx}{\textwidth}{l|*{4}{>{\centering\arraybackslash}X|}>{\centering\arraybackslash}X}
\hline
\textbf{\(\)} & \(\rho=0.1\) & \(\rho=0.3\) & \(\rho=0.5\) & \(\rho=0.7\) & \(\rho=0.9\) \\
\hline
\(n=10^5\) & 0.15635 & 0.08987 & 0.07137 & 0.06079 & 0.05550 \\
\(n=10^6\) & 0.14217 & 0.08468 & 0.06880 & 0.06020 & 0.05474 \\
\(n=10^7\) & 0.13468 & 0.08094 & 0.06775 & 0.06006 & 0.05407 \\
\(n=10^8\) & 0.12773 & 0.07784 & 0.06654 & 0.05923 & 0.05272 \\
\(n=10^9\) & 0.11952 & 0.07520 & 0.06518 & 0.05708 & 0.05432 \\
\hline
\end{tabularx}\label{tab3}
\end{table}

\vspace{5pt} 

\begin{table}[H]
\centering
\captionsetup{skip=6.5pt} 
\caption{Estimates of $\mathrm{FWER^{I}}$ for \(\alpha=0.01\)}
\begin{tabularx}{\textwidth}{l|*{4}{>{\centering\arraybackslash}X|}>{\centering\arraybackslash}X}
\hline
\textbf{\(\)} & \(\rho=0.1\) & \(\rho=0.3\) & \(\rho=0.5\) & \(\rho=0.7\) & \(\rho=0.9\) \\
\hline
\(n=10^5\) & 0.06749 & 0.02528 & 0.01717 & 0.01345 & 0.01113 \\
\(n=10^6\) & 0.05937 & 0.02299 & 0.01561 & 0.01375 & 0.01091 \\
\(n=10^7\) & 0.05204 & 0.02091 & 0.01543 & 0.01305 & 0.01060 \\
\(n=10^8\) & 0.04668 & 0.01975 & 0.01460 & 0.01288 & 0.01107 \\
\(n=10^9\) & 0.04325 & 0.01920 & 0.01422 & 0.01178 & 0.01084 \\
\hline
\end{tabularx}\label{tab4}
\end{table}


\subsection{Unknown $\rho$}
We employ a similar simulation scheme as in the preceding section. As mentioned earlier, under $H_0$, $X_i = \theta + Z_i$ where $\theta \sim N(0, \rho)$, independent of $\{Z_n\}_{n \geq 1}$ and $Z_i \overset{\text{iid}}{\sim} N(0, 1-\rho)$. This gives
\begin{align*}
\mathrm{FWER_{H_0}^I}(n, \alpha, \rho) &= \mathbb{P}_{\mathrm{H_0}} \left( \bigcup_{i=1}^{n} \{Z_i + \theta > c_n(\alpha, \hat{\rho}_n^{\star}) \} \right)
\end{align*}
where $\hat{\rho}_n^{\star}$ is the estimator of $\rho$ proposed in Section 3.2. 

Tables \ref{tab5}-\ref{tab8} present the estimates of FWER under the equicorrelated normal setup for some specific values of $(n, \alpha, \rho)$. Evidently, the FWER under global null is decreasing to the target $\alpha$ values for each case as $n$ grows, while for each $n$ the values decrease as $\rho$ increases. Here also, the convergence of FWER towards $\alpha$ is much faster for higher values of $\rho$.
\renewcommand{\arraystretch}{1.2} 
\setlength{\tabcolsep}{10pt} 

\begin{table}[H]
\centering
\captionsetup{skip=6.5pt}
\caption{Estimates of $\mathrm{FWER^{II}}$ for \(\alpha=0.15\)}
\begin{tabularx}{\textwidth}{l|*{4}{>{\centering\arraybackslash}X|}>{\centering\arraybackslash}X}
\hline
\textbf{\(\)} & \(\rho=0.1\) & \(\rho=0.3\) & \(\rho=0.5\) & \(\rho=0.7\) & \(\rho=0.9\) \\
\hline
\(n=5000\)  & 0.3074 & 0.2290 & 0.1916 & 0.1724 & 0.1638 \\
\(n=10000\) & 0.3103 & 0.2212 & 0.1927 & 0.1798 & 0.1635 \\
\(n=15000\) & 0.3037 & 0.2248 & 0.1982 & 0.1781 & 0.1539 \\
\(n=20000\) & 0.3097 & 0.2262 & 0.1857 & 0.1783 & 0.1617 \\
\hline
\end{tabularx}\label{tab5}
\end{table}

\vspace{5pt}

\begin{table}[H]
\centering
\captionsetup{skip=6.5pt}
\caption{Estimates of $\mathrm{FWER^{II}}$ for \(\alpha=0.10\)}
\begin{tabularx}{\textwidth}{l|*{4}{>{\centering\arraybackslash}X|}>{\centering\arraybackslash}X}
\hline
\textbf{\(\)} & \(\rho=0.1\) & \(\rho=0.3\) & \(\rho=0.5\) & \(\rho=0.7\) & \(\rho=0.9\) \\
\hline
\(n=5000\)  & 0.2513 & 0.1600 & 0.1403 & 0.1196 & 0.1117 \\
\(n=10000\) & 0.2499 & 0.1660 & 0.1336 & 0.1203 & 0.1106 \\
\(n=15000\) & 0.2445 & 0.1665 & 0.1352 & 0.1191 & 0.1083 \\
\(n=20000\) & 0.2392 & 0.1563 & 0.1328 & 0.1197 & 0.1124 \\
\hline
\end{tabularx}\label{tab6}
\end{table}

\vspace{5pt}

\begin{table}[H]
\centering
\captionsetup{skip=6.5pt}
\caption{Estimates of $\mathrm{FWER^{II}}$ for \(\alpha=0.05\)}
\begin{tabularx}{\textwidth}{l|*{4}{>{\centering\arraybackslash}X|}>{\centering\arraybackslash}X}
\hline
\textbf{\(\)} & \(\rho=0.1\) & \(\rho=0.3\) & \(\rho=0.5\) & \(\rho=0.7\) & \(\rho=0.9\) \\
\hline
\(n=5000\)  & 0.1845 & 0.0974 & 0.0807 & 0.0607 & 0.0584 \\
\(n=10000\) & 0.1728 & 0.0969 & 0.0769 & 0.0642 & 0.0551 \\
\(n=15000\) & 0.1690 & 0.0935 & 0.0736 & 0.0637 & 0.0541 \\
\(n=20000\) & 0.1678 & 0.0963 & 0.0689 & 0.0641 & 0.0567 \\
\hline
\end{tabularx}\label{tab7}
\end{table}

\vspace{5pt}

\begin{table}[H]
\centering
\captionsetup{skip=6.5pt}
\caption{Estimates of $\mathrm{FWER^{II}}$ for \(\alpha=0.01\)}
\begin{tabularx}{\textwidth}{l|*{4}{>{\centering\arraybackslash}X|}>{\centering\arraybackslash}X}
\hline
\textbf{\(\)} & \(\rho=0.1\) & \(\rho=0.3\) & \(\rho=0.5\) & \(\rho=0.7\) & \(\rho=0.9\) \\
\hline
\(n=5000\)  & 0.0884 & 0.0326 & 0.0186 & 0.0143 & 0.0138 \\
\(n=10000\) & 0.0826 & 0.0280 & 0.0182 & 0.0145 & 0.0144 \\
\(n=15000\) & 0.0790 & 0.0304 & 0.0169 & 0.0124 & 0.0104 \\
\(n=20000\) & 0.0773 & 0.0273 & 0.0184 & 0.0138 & 0.0125 \\
\hline
\end{tabularx}\label{tab8}
\end{table}

\section{Discussion}\label{sec:discussion}
This work proposes a simple single-step multiple testing procedure that asymptotically controls the FWER at the desired level exactly under the equicorrelated multivariate Gaussian setup. The proposed procedure exhibits greater rejections than the classical Bonferroni method. The method is shown to be asymptotically exact using an explicit plug-in estimator for the equicorrelation and also does not require stepwise adjustments. 

There are several possible extensions in different directions. One interesting problem would be to study a similar limiting behavior in more general distributional settings. Another viewpoint would be from a Bayesian angle, where studying asymptotic Bayesian optimality of such procedures (as in \cite{Bogdan}) might be intriguing. Finally, extending our limiting results to more general correlation structures would be a challenging problem.


 \newpage 

\section*{Appendix}\label{sec:appendix}

\subsection*{A. Proof of \autoref{fwer1}}

The cutoff sequence $\left\{c_n(\alpha, \rho)\right\}_{n \geq 1}$ of \autoref{fwer1} has been found utilizing the following two crucial results: 


\begin{lemma}\label{appendixlemma2}
    Define $a_{n}=\Phi^{-1}\left(1-\frac{1}{n}\right) \ \forall \ n \geq 1$. Then,
    $$\displaystyle\lim _{n \rightarrow \infty} \Phi^{n}\left(a_{n}+t\right)=\left\{\begin{array}{ll}0 & \text { for } t<0, \\ e^{-1} & \text { for } t=0, \\ 1 & \text { for } t>0. \end{array} \right.$$
\end{lemma}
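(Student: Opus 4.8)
The plan is to reduce everything to the asymptotics of the standard normal tail $\bar\Phi := 1-\Phi$. First I would record that $a_n\to\infty$, which is immediate from $\bar\Phi(a_n)=1/n\to 0$, and that the case $t=0$ is trivial: since $\Phi(a_n)=1-\tfrac1n$ by definition,
\[
\Phi^n(a_n)=\Bigl(1-\tfrac1n\Bigr)^n\longrightarrow e^{-1}.
\]
So the work is in the cases $t\neq 0$, where I would compare $\bar\Phi(a_n+t)$ with $\bar\Phi(a_n)=1/n$.

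The key estimate is the Mills-ratio asymptotic $\bar\Phi(x)=\dfrac{\phi(x)}{x}\,(1+o(1))$ as $x\to\infty$. Applying it at $x=a_n$ and $x=a_n+t$ (both $\to\infty$) gives
\[
n\,\bar\Phi(a_n+t)=\frac{\bar\Phi(a_n+t)}{\bar\Phi(a_n)}
=\frac{a_n}{a_n+t}\cdot\frac{\phi(a_n+t)}{\phi(a_n)}\,(1+o(1))
=\frac{a_n}{a_n+t}\,\exp\!\Bigl(-a_n t-\tfrac{t^2}{2}\Bigr)(1+o(1)).
\]
Since $a_n\to\infty$, the factor $a_n/(a_n+t)\to 1$, and the exponential term tends to $0$ when $t>0$ and to $+\infty$ when $t<0$. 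Hence $n\,\bar\Phi(a_n+t)\to 0$ for $t>0$ and $n\,\bar\Phi(a_n+t)\to\infty$ for $t<0$.

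Finally I would convert this into the claimed limits for $\Phi^n(a_n+t)=\bigl(1-\bar\Phi(a_n+t)\bigr)^n=\exp\!\bigl(n\log(1-\bar\Phi(a_n+t))\bigr)$. Using $\log(1-u)=-u+O(u^2)$ as $u\to 0$, we get $n\log(1-\bar\Phi(a_n+t))=-n\,\bar\Phi(a_n+t)+o(1)$ whenever $\bar\Phi(a_n+t)\to 0$ (true for all $t$ since $a_n+t\to\infty$). Therefore for $t>0$ the exponent tends to $0$ and $\Phi^n(a_n+t)\to 1$; for $t<0$ the exponent tends to $-\infty$ and $\Phi^n(a_n+t)\to 0$. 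I expect the only mildly delicate point to be making the two applications of the Mills ratio uniform enough to conclude the ratio computation cleanly (equivalently, controlling the $o(1)$ terms), but this is routine because the argument shifts by the fixed constant $t$; everything else is elementary.
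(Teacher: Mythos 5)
Your proof is correct, and it takes a genuinely different route from the paper. The paper invokes the Fisher--Tippett--Gnedenko/Gumbel limit $\Phi^n(a_n+b_nx)\to\exp(-e^{-x})$ with $b_n\approx 1/a_n$ (citing Galambos) and then runs an $\varepsilon$-sandwich argument, comparing the fixed shift $t$ with $b_nx$ for large negative or positive $x$ to squeeze the limit to $0$ or $1$; the $t=0$ case is the same direct computation as yours. You instead work elementarily from the Mills-ratio asymptotic $\bar\Phi(x)=\phi(x)x^{-1}(1+o(1))$, exploiting the exact identity $\bar\Phi(a_n)=1/n$ to get $n\bar\Phi(a_n+t)=\frac{a_n}{a_n+t}e^{-a_nt-t^2/2}(1+o(1))$, which immediately separates the cases $t>0$ and $t<0$. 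This is self-contained (no extreme-value theory needed) and even yields the rate at which the limit is approached, whereas the paper's choice fits its broader toolkit, since the same Gumbel/Galambos machinery is reused later for the $k$-FWER results (Theorem 2.8.1 of Galambos in Proposition \ref{prop1}); in fact your ratio computation is essentially what the paper redoes by hand in Proposition \ref{a_n_proposition}. One small point in your last step: the expansion $n\log(1-u_n)=-nu_n+o(1)$ requires $nu_n^2\to 0$, not merely $u_n\to 0$, and for $t<0$ you have $nu_n\to\infty$, so as stated the claim needs a word of justification. It does hold here because $u_n\approx n^{-1}e^{|t|a_n}$ with $a_n\sim\sqrt{2\log n}$, so $nu_n^2\to 0$; alternatively you can bypass it entirely via the one-sided bounds $(1-u_n)^n\le e^{-nu_n}\to 0$ for $t<0$ and $1\ge(1-u_n)^n\ge 1-nu_n\to 1$ for $t>0$.
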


\begin{lemma}\label{appendixlemma3}
    For any random variable $X$, any $t>0$ and any constant $k$ such that $\mathbb{P}(X=k)=0$, $\displaystyle\lim _{n \rightarrow \infty} \mathbb{E}\left[\Phi^{n}\left(a_{n}+t(X-k)\right)\right]=\mathbb{P}(X>k)$.
\end{lemma}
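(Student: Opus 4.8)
The plan is to recognize the expectation as $\mathbb{E}[g_n(X)]$, where $g_n(x) := \Phi^n\big(a_n + t(x-k)\big)$, to show that $g_n$ converges pointwise to the indicator $\mathbb{I}\{x>k\}$ at every point except $x=k$, and then to pass the limit through the expectation by the bounded convergence theorem. The hypothesis $\mathbb{P}(X=k)=0$ is exactly what makes the single exceptional point negligible.

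First I would fix $x\in\mathbb{R}$ with $x\neq k$ and set $s:=t(x-k)$. Since $t>0$, we have $s>0\Leftrightarrow x>k$ and $s<0\Leftrightarrow x<k$, and in either case $s\neq 0$. Applying \autoref{appendixlemma2} with this $s$ in the role of $t$ there gives
\[
g_n(x)=\Phi^n(a_n+s)\longrightarrow \mathbb{I}\{s>0\}=\mathbb{I}\{x>k\}\qquad\text{as }n\to\infty.
\]
(At the one remaining point $x=k$ one has $g_n(k)=\Phi^n(a_n)\to e^{-1}$ by the same lemma, but this point carries no probability mass.) Consequently, since $\mathbb{P}(X=k)=0$, the convergence $g_n(X)\to\mathbb{I}\{X>k\}$ holds almost surely.

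Next I would note that $\Phi$ is $[0,1]$-valued, so $0\le g_n(x)\le 1$ for every $n$ and every $x$; hence the constant function $1$ is an integrable dominating function. The bounded (dominated) convergence theorem then yields
\[
\lim_{n\to\infty}\mathbb{E}\left[\Phi^n\big(a_n+t(X-k)\big)\right]=\mathbb{E}\left[\mathbb{I}\{X>k\}\right]=\mathbb{P}(X>k),
\]
which is the assertion of the lemma.

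I do not expect a genuine obstacle here: all the analytic substance is already packaged in \autoref{appendixlemma2}, and the only delicate point is the behaviour at $x=k$, which the hypothesis $\mathbb{P}(X=k)=0$ is precisely designed to neutralize. If one preferred not to invoke the value $e^{-1}$ at all, one could simply restrict attention to the event $\{X\neq k\}$, which has full probability, and apply the pointwise limit only there before invoking bounded convergence.
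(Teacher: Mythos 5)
Your argument is correct and follows essentially the same route as the paper: both apply \autoref{appendixlemma2} pointwise (with $s=t(x-k)$ in the role of $t$) to get almost sure convergence of $\Phi^n(a_n+t(X-k))$ to $\mathbb{I}\{X>k\}$, using $\mathbb{P}(X=k)=0$ to dismiss the exceptional point, and then conclude by dominated convergence with the bound $0\le\Phi^n\le 1$. Your write-up is in fact slightly more explicit about the substitution and the behaviour at $x=k$ than the paper's own proof.
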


\begin{proof}[Proof of \autoref{appendixlemma2}]
    Using the Fisher-Tippett-Gnedenko Theorem, \cite{Galambos1978} showed (see section 2.3.2 therein) that
$$
\lim_{n \to \infty} \Phi^{n}\left(a_{n}+b_{n} x\right) = \exp \left(-e^{-x}\right), \hspace{2mm} \text {where} \hspace{2mm} a_{n}=\Phi^{-1}\left(1-\frac{1}{n}\right), b_{n} \approx \frac{1}{a_{n}}.
$$

\noindent \textbf{Case 1: $t<0$.}

For each $x<0$, there exists $N(x) \in \mathbb{N}$ such that $\forall \ n \geq N(x)$, we have $t<b_{n} x$ and hence $\Phi^{n}\left(a_{n}+t\right)<\Phi^{n}\left(a_{n}+b_{n} x\right)$.
Also, note that $\displaystyle\lim _{x \rightarrow -\infty} \exp \left(-e^{-x}\right)=0$.
Therefore, for each $\varepsilon_1>0$, there exists $x_{\varepsilon_1}<0$ such that $\exp \left(-e^{-x_{\varepsilon_1}}\right)<\varepsilon_1$. 

Now, for each $\varepsilon_2>0$, there exists $N(\varepsilon_2) \in \mathbb{N}$ such that for all $n \geq N(\varepsilon_2)$, 
$$|\Phi^{n}\left(a_{n}+b_{n} x_{\varepsilon_1}\right)-\exp \left(-e^{-x_{\varepsilon_1}}\right)|<\varepsilon_2.$$
Hence, for each $\varepsilon_2>0$, there exists $N(\varepsilon_2) \in \mathbb{N}$ such that for all $n \geq N(\varepsilon_2)$, $\Phi^{n}\left(a_{n}+b_{n} x_{\varepsilon_1}\right)<\varepsilon_1+\varepsilon_2$.

\noindent This implies that for each $\varepsilon_1, \varepsilon_2>0$, we always have that for each $n \geq \max \{N(x_{\varepsilon_1}), N(\varepsilon_2)\}$, $$\Phi^{n}\left(a_{n}+t\right)<\varepsilon_1+ \varepsilon_2.$$
Since $\varepsilon_1$ and $\varepsilon_2$ are arbitrary, we conclude that $\displaystyle\lim _{n \rightarrow \infty} \Phi^{n}\left(a_{n}+t\right)=0$. Note that this is true for any $t<0$.

\noindent \textbf{Case 2: $t>0$.}

For each $x>0$, there exists $N(x) \in \mathbb{N}$ such that $\forall \ n \geq N(x)$, we have $t>b_{n} x$ and hence $\Phi^{n}\left(a_{n}+t\right)>\Phi^{n}\left(a_{n}+b_{n} x\right)$.

Also, note that $\displaystyle\lim _{x \rightarrow \infty} \exp \left(-e^{-x}\right)=1$.
Therefore, for each $\varepsilon_1>0$, there exists $x_{\varepsilon_1}>0$ such that $1>\exp \left(-e^{-x_{\varepsilon_1}}\right)>1-\varepsilon_1$. 

Now, for each $\varepsilon_2>0$, there exists $N(\varepsilon_2) \in \mathbb{N}$ such that for all $n \geq N(\varepsilon_2)$, 
$$|\Phi^{n}\left(a_{n}+b_{n} x_{\varepsilon_1}\right)-\exp \left(-e^{-x_{\varepsilon_1}}\right)|<\varepsilon_2.$$
Hence, for each $\varepsilon_2>0$, there exists $N(\varepsilon_2) \in \mathbb{N}$ such that for all $n \geq N(\varepsilon_2)$, $\Phi^{n}\left(a_{n}+b_{n} x_{\varepsilon_1}\right)>1-\varepsilon_1-\varepsilon_2$.

\noindent This implies that for each $\varepsilon_1, \varepsilon_2>0$, we always have that for each $n \geq \max \{N(x_{\varepsilon_1}), N(\varepsilon_2)\}$, $$\Phi^{n}\left(a_{n}+t\right)>1-\varepsilon_1- \varepsilon_2.$$
Since $\varepsilon_1$ and $\varepsilon_2$ are arbitrary, we conclude that $\displaystyle\lim _{n \rightarrow \infty} \Phi^{n}\left(a_{n}+t\right)=1$. Note that this is true for any $t>0$.

\noindent \textbf{Case 3: $t=0$.}

\noindent Here $\Phi^{n}\left(a_{n}+t\right)=\Phi^{n}\left(a_{n}\right)=\Phi^{n}\left(\Phi^{-1}\left(1-\frac{1}{n}\right)\right)=\left(1-\frac{1}{n}\right)^{n}$. So, $$\displaystyle \lim _{n \rightarrow \infty} \Phi^{n}\left(a_{n}+t\right)=\displaystyle \lim _{n \rightarrow \infty}\left(1-\frac{1}{n}\right)^{n}=e^{-1}.$$
Hence, \autoref{appendixlemma2} is proved.
\end{proof}

\begin{proof}[Proof of \autoref{appendixlemma3}]

We use \autoref{appendixlemma2} to prove \autoref{appendixlemma3}. We have a r.v $X$ and a constant $k$ such that $\mathbb{P}(X=k)=0$. Define another r.v $Y:=\mathbb{I}(X>k)$. \autoref{appendixlemma2} implies that for any $t>0$, $$\Phi^{n}\left(a_{n}+t(X-k)\right) \overset{a.s}{\longrightarrow} Y.$$
We also note that $\Phi^{n}(\cdot)$ is bounded between $0$ and $1$. Hence, by dominated convergence theorem, we have,
$$
\lim _{n \rightarrow \infty} \mathbb{E}\left[\Phi^{n}\left(a_{n}+t(X-k)\right)\right]=\mathbb{E}[Y]=\mathbb{P}(X>k).
$$
\end{proof}
\noindent We recall now the expression of the FWER of our proposed procedure: 
$$\mathrm{FWER_{H_{0}}^{I}}\left(n, \alpha, \rho\right)=1-\mathbb{E}\left[\Phi^{n}\left(\frac{c_n(\alpha, \rho)+\sqrt{\rho} Z}{\sqrt{1-\rho}}\right)\right], \hspace{2mm} (Z \sim N(0, 1)).$$ 
Our strategy is to write $1-\mathrm{FWER_{H_{0}}^{I}}\left(n, \alpha, \rho\right)$ as $\mathbb{E}\left[\Phi^{n}\left(a_{n}+t(X-k)\right)\right]$ (for suitably chosen $t$, $X$ and $k$) so that we can utilize \autoref{appendixlemma3}. Towards this, we observe 
\begin{align*}
    & 1 - \lim_{n \to \infty} \mathrm{FWER_{H_{0}}^{I}}\left(n, \alpha, \rho\right)\\
    =  &\lim_{n \to \infty} \mathbb{E}\left[\Phi^{n}\left(\frac{c_n(\alpha, \rho)+\sqrt{\rho} Z}{\sqrt{1-\rho}}\right)\right] \\
    = & \lim _{n \rightarrow \infty} \mathbb{E}\left[\Phi^{n}\left(a_{n}+\frac{\sqrt{\rho}}{\sqrt{1-\rho}}\left(Z-\Phi^{-1}(\alpha)\right)\right)\right] \hspace{2mm}
 \text{(for our chosen $c_n(\alpha, \rho)$)} \\
 = &\mathbb{P}\left(Z>\Phi^{-1}(\alpha)\right) \hspace{2mm}
 \text{(from \autoref{appendixlemma3}, putting $t=\frac{\sqrt{\rho}}{\sqrt{1-\rho}}, k=\Phi^{-1}(\alpha)$)} \\
 = & 1-\alpha.
 \end{align*}
 The rest is obvious.

\subsection*{B. Proof of \autoref{fwer2}}

Without loss of generality, we assume $X_{i} \sim N(0, 1)$ for $1<i \leq n_{0}$ and for $n_{0}+1 \leq i \leq n$, $X_{i} \sim N\left(\mu_{i}, 1\right), \mu_{i}>0$. We are considering ${n_{0}}/{n} \rightarrow p_{0}$ as $n \rightarrow \infty$ for some $p_{0}>0$. Hence, we can consider $\left\{n_{0}\right\}$ as a sequence in $n$.

\noindent \textbf{Claim.} Let $u_{n}>0$ and $v_{n}>0$ be sequences such that $u_{n} \rightarrow a \geq 0$ and $v_{n} \rightarrow b>0$. Then, $u_{n}^{v_{n}} \rightarrow a^{b}$.

\begin{proof}[Proof of Claim]
We consider two cases: 
    
\noindent Case 1: $a>0$. Since $u_{n} \rightarrow a>0$ and $v_{n} \rightarrow b>0$, and the function $f(x, y)=x^{y}$ is continuous on $(0, \infty) \times(0, \infty)$, it follows directly from continuity that $\displaystyle\lim _{n \rightarrow \infty} u_{n}^{v_{n}}=a^{b}$.

\noindent Case 2: $a=0$. Then $u_{n} \rightarrow 0^{+}$and $v_{n} \rightarrow b>0$. Taking logarithms, we write: $u_{n}^{v_{n}}=e^{v_{n} \log u_{n}}$. As
$u_{n} \rightarrow 0^{+}$, we have $\log u_{n} \rightarrow-\infty$, and hence $v_{n} \log u_{n} \rightarrow-\infty$. Thus, $u_{n}^{v_{n}} \rightarrow e^{-\infty}=0=a^{b}$.
\end{proof}
\noindent We take $u_{n}=\Phi^{n}\left(a_{n}+t\right)$ and $v_{n}=\frac{n_{0}}{n}$. \autoref{appendixlemma2}, combined with the preceding claim, give:
$$
\lim _{n \rightarrow \infty} \Phi^{n_{0}}\left(a_{n}+t\right)= \begin{cases}0 & \text { for } t<0 \\ e^{-1} & \text { for } t=0 \\ 1 & \text { for } t>0.\end{cases}
$$
Proceeding similarly, as in the proof of \autoref{fwer1}, completes the rest.

\subsection*{C. Proof of \autoref{lem1}}
\noindent One observes that 
$$X_i \overset{d}{=} \theta + Z_i + \mu_i$$
where $\theta \sim N(0, \rho)$, $Z_i \overset{i.i.d}{\sim} N(0-\rho)$; and $\theta$ and $Z_{i}$'s are independent. Moreover, $\mu_i$ is zero for $i \in \mathcal{I}_0$ and strictly positive otherwise. Now,
\begin{align*}
    Y_i & = 1 - \frac{1}{2}\cdot (X_{2i-1}-X_{2i})^2 \\
        & = 1 - \frac{1}{2}\cdot\left[(\mu_{2i-1}-\mu_{2i}) + (Z_{2i-1}-Z_{2i})  \right]^2
\end{align*}
Hence, 
\begin{align*}
    \frac{1}{m}\displaystyle\sum_{i=1}^m Y_i &  = \frac{1}{m}\displaystyle\sum_{i=1}^m\Bigg[ 1 - \frac{(Z_{2i-1}-Z_{2i}) ^2}{2} \Bigg] - \frac{1}{m} \displaystyle\sum_{i=1}^m (\mu_{2i-1}-\mu_{2i})(Z_{2i-1}-Z_{2i}) - \frac{1}{2m} \displaystyle\sum_{i=1}^m (\mu_{2i-1}-\mu_i)^2\\
    & =\frac{1}{m}\displaystyle\sum_{i=1}^m U_i -  \frac{1}{m} \displaystyle\sum_{i=1}^m V_i -\frac{1}{2m} \displaystyle\sum_{i=1}^m (\mu_{2i-1}-\mu_i)^2 \quad \text{(say).}
\end{align*}
For each $i$, the random variable $U_i$ has mean $\rho$ and variance $2(1-\rho)^2$. Using SLLN, we obtain that $\frac{1}{m}\displaystyle\sum_{i=1}^m U_i \to \rho$ almost surely as $n \to \infty$.

\noindent For each $i$, $V_i$ has mean zero and variance $2(1-\rho)(\mu_{2i-1}-\mu_i)^2$. This implies that, 
\begin{align*}
    \displaystyle\sum_{i=1}^m \frac{\operatorname{Var}(V_i)}{m^2}  = 2(1-\rho) \frac{\displaystyle\sum_{i=1}^m (\mu_{2i-1}-\mu_i)^2}{m^2} &< 2(1-\rho) \frac{\displaystyle\sum_{i \in \mathcal{I} \setminus \mathcal{I}_0} \mu_{i}^2}{m^2}  \\
    & < \infty \quad \text{(from the given condition)}.
\end{align*}
Using Theorem 6.7 of \cite{Petrov}, we obtain that $\frac{1}{m}\displaystyle\sum_{i=1}^m V_i \to 0$ almost surely as $n \to \infty$.

Along similar lines, it is also evident that, under the given condition, $\frac{1}{2m} \displaystyle\sum_{i=1}^m (\mu_{2i-1}-\mu_i)^2 \to 0$ as $n \to \infty$.

Combining each of the limiting behaviors, one obtains $\frac{1}{m} \displaystyle\sum_{i=1}^m Y_i \to \rho$ almost surely as $n \to \infty$. Since $\rho >0$, this implies $\max\{\frac{1}{m} \displaystyle\sum_{i=1}^m Y_i, 0\} \to \rho$ almost surely as $n \to \infty$, completing the proof of the first part. 

\noindent For the second part, consider $r_n: \sqrt{2 \log n}$ and $s_n = [n/2]/r_n^2$. Then, $$\displaystyle\sum_{i=1}^m \frac{\operatorname{Var}(U_i)}{s_n^2} = 2(1-\rho)^2 \cdot \frac{(2 \log n)^2}{\lfloor \frac{n}{2} \rfloor} \longrightarrow 0 \hspace{2mm} \text{as $n \to \infty$}.$$
This implies 
\begin{equation}
    r_n^2 \cdot \left( \frac{1}{m}\displaystyle\sum_{i=1}^m U_i- \rho \right) \to 0 \hspace{2mm} \text{almost surely as $n \to \infty$}. \label{Ubar_convergence}
\end{equation}
We also have, 
$$\displaystyle\sum_{i=1}^m \frac{\operatorname{Var}(V_i)}{s_n^2} =2(1-\rho) \cdot \frac{\displaystyle\sum_{i=1}^m (\mu_{2i-1}-\mu_i)^2}{s_n^2}\to 0 \hspace{2mm} \text{as $n \to \infty$}.$$
Hence, 
\begin{equation}
r_n^2 \cdot \frac{1}{m}\displaystyle\sum_{i=1}^m V_i \to 0 \hspace{2mm} \text{almost surely as $n \to \infty$}.
\label{Vbar_convergence}
\end{equation}
Equations \eqref{Ubar_convergence} and \eqref{Vbar_convergence} suggest that 
$$r_n ^ 2 \cdot \left (\frac{1}{m} \displaystyle\sum_{i=1}^m Y_i -\rho \right) \to 0\hspace{2mm} \text{almost surely as $n \to \infty$}$$
if $r_n ^ 2 \cdot \frac{1}{2m} \displaystyle\sum_{i=1}^m (\mu_{2i-1}-\mu_i)^2 \to 0$ as $n \to \infty$. However $r_n ^ 2 \cdot \frac{1}{2m} \displaystyle\sum_{i=1}^m (\mu_{2i-1}-\mu_i)^2 \to 0$ as $n \to \infty$ if $\frac{\log n \cdot n_1 \cdot \sup_i \mu_i^2}{n} \to 0$ as $n \to \infty$. Hence, under the given conditions, one has 
$$r_n^2 \cdot(\hat{\rho}_n^{\star}-\rho) \to 0\hspace{2mm} \text{almost surely as $n \to \infty$}.$$
The rest is obvious since $a_n \leq r_n$.

\subsection*{D. Proof of \autoref{fwer3}} 
We have 
$$
M_n-c_n(\alpha-\epsilon, \rho)<M_n-\hat{c}_n(\alpha, \rho)<M_n-c_n( \alpha+\epsilon, \rho) \quad \text{almost surely as $n \to \infty$}.
$$
This implies 
$$\left\{M_n-c_n(\alpha+\epsilon, \rho) \leq 0\right\} \subseteq\left\{M_n-\hat{c}_n(\alpha, \rho) \leq 0\right\} \subseteq\left\{M_n-c_n(\alpha-\epsilon, \rho) \leq 0\right\} \quad \text{a.s}.$$
Now, 
\begin{align*}
    \limsup _{n \to \infty} \mathbb{P}\left(M_n-\hat{c}_n(\alpha, \rho) \leq 0\right)  \leq \limsup _{n \to \infty} \mathbb{P}\left(M_n-c_n(\alpha-\epsilon, \rho) \leq 0\right)
    & = 1 - \alpha + \epsilon.
\end{align*}
\begin{align*}
    \liminf _{n \to \infty} \mathbb{P}\left(M_n-\hat{c}_n(\alpha, \rho) \leq 0\right)  \geq \liminf _{n \to \infty} \mathbb{P}\left(M_n-c_n(\alpha+\epsilon, \rho) \leq 0\right)
    & = 1 - \alpha - \epsilon.
\end{align*}
Therefore, 
$$ \lim _{n \to \infty} \mathbb{P}\left(M_n-\hat{c}_n(\alpha, \rho)\leq 0\right)=1-\alpha.$$
The rest follows.

\subsection*{E. Proof of \autoref{power_new_1}}
The following two results would be crucial towards proving \autoref{power_new_1}. 

\begin{proposition}\label{a_n_proposition}
Suppose $a_n = \Phi^{-1}(1-1/n)$. Then, for any $t\neq 0$, 
$$ \lim_{n \to \infty} \frac{1}{\sqrt{2\pi}} \cdot \frac{ e^{-\frac{1}{2} a_n^2} \cdot e^{-a_n t}} {\sqrt{2 \log n}} = \lim_{n \to \infty} \frac{e^{-t \sqrt{2 \log n}}}{n}.
$$    
\end{proposition}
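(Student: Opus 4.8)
The plan is to prove the (strictly stronger) statement that the two sequences on either side are asymptotically equivalent, i.e.\ that their ratio tends to $1$; since, as I note at the end, each side separately tends to $0$, this forces the two limits to agree (both equal $0$), and asymptotic equivalence is in any case the form in which the proposition is used downstream. Write $L_n := \sqrt{2\log n}$ throughout, and recall $a_n = \Phi^{-1}(1-1/n)\to\infty$.

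Everything hinges on two classical facts about $a_n$, which I would establish first. (i) From the Mills-ratio tail asymptotic $1-\Phi(x)\sim\phi(x)/x$ as $x\to\infty$, together with $1-\Phi(a_n)=\tfrac1n$, one gets
$$\frac{1}{n} \;\sim\; \frac{\phi(a_n)}{a_n} \;=\; \frac{1}{\sqrt{2\pi}\,a_n}\,e^{-a_n^2/2}, \qquad\text{equivalently}\qquad \frac{n\,e^{-a_n^2/2}}{\sqrt{2\pi}} \;\sim\; a_n .$$
(ii) The sharper location estimate $a_n - L_n \to 0$. To obtain it, take logarithms in (i): $a_n^2 = 2\log n - \log(2\pi) - 2\log a_n + o(1)$. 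Since $\log(2\pi)+2\log a_n\to\infty$, this yields $a_n^2 < 2\log n$ for large $n$, hence $\log a_n = o(\log n)$ and therefore $a_n/L_n\to 1$; feeding this back gives $\log a_n = o(a_n)$, so
$$a_n - L_n \;=\; \frac{a_n^2 - 2\log n}{a_n + L_n} \;=\; \frac{-\log(2\pi) - 2\log a_n + o(1)}{a_n + L_n} \;\longrightarrow\; 0 .$$

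With (i) and (ii) in hand the proposition is immediate. Dividing the left-hand expression by the right-hand one and rearranging,
$$\frac{\dfrac{1}{\sqrt{2\pi}}\cdot\dfrac{e^{-a_n^2/2}e^{-a_n t}}{L_n}}{\dfrac{e^{-tL_n}}{n}} \;=\; \frac{n\,e^{-a_n^2/2}}{\sqrt{2\pi}\,L_n}\cdot e^{-t(a_n - L_n)} ,$$
where the first factor is $\sim a_n/L_n\to 1$ by (i) and the second factor tends to $1$ by (ii) and continuity of $\exp$; hence the ratio tends to $1$. Finally, each side tends to $0$ separately: for $t>0$ this is clear, and for $t<0$ it holds because $|t|L_n=|t|\sqrt{2\log n}=o(\log n)$, so $e^{-tL_n}/n\to 0$. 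Combined with the ratio limit, both sides have limit $0$ and the asserted equality follows (indeed for every $t\neq 0$, and in fact the argument covers $t=0$ as well).

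The only non-routine point is fact (ii): it is genuinely necessary to control $a_n - L_n$, not merely $a_n/L_n$, because the exponents $e^{-a_n t}$ and $e^{-tL_n}$ must be matched \emph{additively} — a purely multiplicative estimate on $a_n$ would be useless here. That additive precision is exactly what the logarithm-plus-bootstrap computation built on the Mills-ratio estimate supplies; everything else is bookkeeping.
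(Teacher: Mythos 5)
Your proof is correct and follows essentially the same route as the paper's: the Mills-ratio tail asymptotic applied at $a_n$ together with the additive estimate $a_n-\sqrt{2\log n}\to 0$, yielding asymptotic equivalence of the two sides. The only cosmetic difference is that the paper derives the explicit second-order expansion $a_n^2=2\log n-\log\log n-\log 4\pi+o(1)$ and substitutes it, while you bypass that by keeping $n\,e^{-a_n^2/2}/\sqrt{2\pi}\sim a_n$ paired and dividing the two expressions directly, which is slightly leaner bookkeeping but not a different argument.
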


\begin{proposition}\label{Phi^dn}
Suppose $\{d_n\}_{n\geq1}$ is a sequence of $n$ which tends to $\infty$ as $n \to \infty$. Then the following hold.  
\begin{enumerate}
    \item If $\frac{d_n}{n}=o\Big(\frac{1}{l^{\sqrt{\log n}}}\Big)$ for all $l>1$ then for any $t\in\mathbb{R}$,
\[\displaystyle\lim_{n\to\infty}\Phi^{d_n}(a_n+t)=1.\]
\item If $\frac{d_n}{n}\cdot l^{\sqrt{2 \log n}} \to \infty$ as $n \to \infty$ for all $l >1$ then,
$$\displaystyle\lim _{n \rightarrow \infty} \Phi^{d_n}\left(a_{n}+t\right)=\left\{\begin{array}{ll}0 & \text { for } t<0, \\ 1 & \text { for } t>0. \end{array}\right.$$
\end{enumerate}
\end{proposition}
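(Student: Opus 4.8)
The plan is to route both parts through the single quantity $d_n p_n$, where $p_n := 1-\Phi(a_n+t)$. Indeed $\Phi^{d_n}(a_n+t)=(1-p_n)^{d_n}$, and the elementary bounds
\[
1-d_n p_n \;\le\; (1-p_n)^{d_n} \;\le\; e^{-d_n p_n}
\]
show that $\Phi^{d_n}(a_n+t)\to 1$ whenever $d_n p_n\to 0$, and $\Phi^{d_n}(a_n+t)\to 0$ whenever $d_n p_n\to\infty$. So the whole argument reduces to tracking $d_n p_n$, and the two growth hypotheses on $d_n/n$ will turn out to be exactly calibrated for this.

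The one real computation is the decay rate of $p_n$. For $t=0$ it is exactly $1/n$. For $t\ne 0$ I would combine Mills' ratio, $1-\Phi(a_n+t)\sim \phi(a_n+t)/(a_n+t)$ as $n\to\infty$ (valid since $a_n\to\infty$), with $a_n+t\sim a_n\sim\sqrt{2\log n}$: writing $\phi(a_n+t)=\tfrac{1}{\sqrt{2\pi}}e^{-a_n^2/2}e^{-a_n t}e^{-t^2/2}$ and applying \autoref{a_n_proposition} to replace $\tfrac{1}{\sqrt{2\pi}}e^{-a_n^2/2}e^{-a_n t}/\sqrt{2\log n}$ by $e^{-t\sqrt{2\log n}}/n$ yields
\[
p_n \;\sim\; e^{-t^2/2}\,\frac{e^{-t\sqrt{2\log n}}}{n}\qquad(n\to\infty).
\]
Thus $d_n p_n$ has the exact order $\tfrac{d_n}{n}\,e^{-t\sqrt{2\log n}}=\tfrac{d_n}{n}\,l_t^{\pm\sqrt{2\log n}}$, where $l_t:=e^{|t|}>1$, with a minus sign in the exponent for $t>0$ and a plus sign for $t<0$.

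For the first part, note that ``$\tfrac{d_n}{n}=o(l^{-\sqrt{\log n}})$ for all $l>1$'' is equivalent to ``$\tfrac{d_n}{n}\,l^{\sqrt{2\log n}}\to 0$ for all $l>1$'' (reparametrize $l\mapsto l^{\sqrt 2}$). If $t\ge 0$, then $e^{-t\sqrt{2\log n}}\le 1$ and $\tfrac{d_n}{n}\to 0$, so $d_n p_n\to 0$; if $t<0$, apply the hypothesis with $l=l_t$ to get $\tfrac{d_n}{n}\,l_t^{\sqrt{2\log n}}\to 0$, so again $d_n p_n\to 0$. In either case $\Phi^{d_n}(a_n+t)\to 1$.

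For the second part: if $t<0$, then $d_n p_n\sim e^{-t^2/2}\,\tfrac{d_n}{n}\,l_t^{\sqrt{2\log n}}\to\infty$ straight from the hypothesis (with $l=l_t$), whence $\Phi^{d_n}(a_n+t)\le e^{-d_n p_n}\to 0$; if $t>0$, then $n p_n\sim e^{-t^2/2}e^{-t\sqrt{2\log n}}\to 0$, and since $d_n\le n$ we get $d_n p_n\le n p_n\to 0$, so $\Phi^{d_n}(a_n+t)\to 1$. The only delicate point is the asymptotic for $p_n$ — in particular, justifying that $e^{-a_n t}$ may be swapped for $e^{-t\sqrt{2\log n}}$, which is precisely the content of \autoref{a_n_proposition} (it amounts to $t(a_n-\sqrt{2\log n})\to 0$); the remaining steps are routine.
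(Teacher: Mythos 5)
Your proof is correct and takes essentially the same approach as the paper: both arguments reduce everything to the asymptotic order of $d_n\,(1-\Phi(a_n+t))$ via Mills' ratio together with \autoref{a_n_proposition}, the only cosmetic difference being that you sandwich $(1-p_n)^{d_n}$ between $1-d_np_n$ and $e^{-d_np_n}$, whereas the paper works with $-\log\Phi^{d_n}(a_n+t)\sim d_n\bigl(1-\Phi(a_n+t)\bigr)$. If anything, you are slightly more explicit than the paper about the reparametrization linking the hypothesis in terms of $l^{\sqrt{\log n}}$ to the needed bound on $\tfrac{d_n}{n}e^{|t|\sqrt{2\log n}}$, and about invoking $d_n\le n$ in the case $t>0$ of part 2, both of which the paper uses implicitly.
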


\begin{proof}[Proof of \autoref{a_n_proposition}]
    Throughout this proof, $x\sim y$ indicates that $x/y \to 1$ as $n \to \infty$. The well known result $\displaystyle\lim_{x \to \infty} \frac{x(1-\Phi(x))}{\phi(x)}=1$ gives
    \begin{align*}
 &\frac{\phi\left(a_n\right)}{a_n} \sim  \frac{1}{n} \\
 \implies & \log \phi\left(a_n\right)+\log n-\log a_n \rightarrow 0 \\
\implies & \log \left(\frac{1}{\sqrt{2 \pi}} e^{-a_n^2 / 2}\right)+\log n-\log a_n=o(1) \\
\implies & -\frac{a_n^2}{2}+\log n=\log a_n+\log \sqrt{2 \pi}+o(1) \\
\implies & a_n^2  + 2 \log a_n = 2\log n-  \log 2 \pi+o(1).
\end{align*}
One also has $\displaystyle\lim_{n \to \infty} \frac{a_n}{\sqrt{2 \log n}} =1$. This means $2 \log a_n - \log \log n - \log 2 = o(1)$ as $n \to \infty$. This, combined with the earlier derivation results in 
\begin{equation}
    a_n^2  = 2\log n - \log \log n - \log 4\pi + o(1) 
     = 2 \log n \left (1 - \frac{\log \log n + \log 4\pi}{2 \log n} + o(\frac{1}{\log n})\right) \label{a_n_Approx_1}
\end{equation}
Taking square-roots on both sides and applying Taylor series expansion, one obtains
\begin{equation}
    a_n  = \sqrt{2\log n} \left (1 - \frac{\log \log n + \log 4\pi}{4 \log n} + o(\frac{1}{\log n})\right) 
     = \sqrt{2\log n} - \frac{\log \log n + \log 4\pi}{2 \sqrt{2\log n}} + o(\frac{1}{\sqrt{\log n}}) \label{a_n_Approx_2}
\end{equation}
Now, 
\begin{align*}
    \frac{1}{\sqrt{2\pi}} \cdot \frac{ e^{-\frac{1}{2} a_n^2} \cdot e^{-a_n t}} {\sqrt{2 \log n}}
   \sim & \frac{e^{-\frac{1}{2}\left(2 \log n - \log \log n - \log 4 \pi\right)}}{\sqrt{2\pi}\cdot \sqrt{2 \log n}} \cdot e^{-t(a_n - \sqrt{2 \log n})}  \cdot e^{-t \sqrt{2 \log n}} \quad \text{(using \eqref{a_n_Approx_1})}\\
   \sim & \frac{\frac{1}{n} \cdot \sqrt{\log n}\cdot 2\sqrt{\pi}}{\sqrt{2\pi}\cdot \sqrt{2 \log n}} \cdot e^{-t \sqrt{2 \log n}} \quad \text{(since $a_n-\sqrt{2 \log n} \to 0$ from \eqref{a_n_Approx_2})}\\
   \sim & \frac{e^{-t \sqrt{2 \log n}}}{n},
\end{align*}
completing the proof. 
\end{proof}

\begin{proof}[Proof of \autoref{Phi^dn}]
We start with proving the first part. Under the given conditions of $\{d_n\}_{n\geq1}$, if we can show $\displaystyle\lim_{n\to\infty}\Phi^{d_n}(a_n+t)=1,\text{ for any }t<0$, then the convergence for the other cases follow trivially. For $t<0$,
\begin{align*}
-  \log \left[\Phi^{d_n}\left( a_n+t\right)\right] 
= & - d_n \cdot \log \Phi\left( a_n+t\right) \\
\sim & d_n \cdot\left(1-\Phi\left(a_n+t\right)\right) \\
\sim & d_n \cdot \frac{\phi\left(a_n+t\right)}{a_n+t} \\
\sim & d_n \cdot \frac{e^{-t^2/2}}{\sqrt{2\pi} } \cdot \frac{ e^{-\frac{1}{2} a_n^2} \cdot e^{- a_n t}}{\sqrt{2 \log n}} \\
\sim & \frac{e^{-t^2/2}}{\sqrt{2\pi} } \cdot \frac{d_n}{\sqrt{2 \log n}} \cdot e^{- \log n}\\
 = &  e^{-t^2/2} \cdot d_n \cdot \frac{e^{-t \sqrt{2 \log n}}}{n}  \quad \text{(utilizing \autoref{a_n_proposition}).} 
\end{align*}
Thus, for $t<0$, $\displaystyle\lim_{n\to\infty}\log \left[\Phi^{d_n}\left(a_n+t\right)\right] = 0$ when $d_n =o \left( \frac{n}{e^{-t\sqrt{2 \log n}}} \right)$. The first part is established. 

\noindent Consider the second part now. For $t<0$, again we have, 
$$-  \log \left[\Phi^{d_n}\left( a_n+t\right)\right]  \sim  e^{-t^2/2} \cdot d_n \cdot \frac{e^{-t \sqrt{2 \log n}}}{n} .$$
Thus, under the given condition in the second part, for $t<0$, we have
\[\displaystyle\lim_{n\to\infty}\Phi^{d_n}(a_n+t)=0.\]
For $t>0$, one readily obtains $e^{-t\sqrt{2 \log n}} \to 0$ as $n \to \infty$. The rest follows.
\end{proof}
\noindent We prove \autoref{power_new_1} now. One has
$$\begin{aligned}
\mathrm{AnyPower_{I}}(n, \alpha, \rho) & =1 - \mathbb{E}\left[\prod_{i \in \mathcal{I}\setminus \mathcal{I}_0} \Phi\left(\frac{c_n(\alpha, \rho)+\sqrt{\rho} Z-\mu_{{i}}}{\sqrt{1-\rho}}\right)\right], \quad Z \sim N(0, 1) \\
& \geq 1 - \mathbb{E}\left[\prod_{i \in \mathcal{I}\setminus \mathcal{I}_0}^{} \Phi\left(\frac{c_n(\alpha, \rho)+\sqrt{\rho} Z-\inf \mu_i}{\sqrt{1-\rho}}\right)\right]\\
& = 1 - \mathbb{E}\left[ \Phi^{n_1}\left(a_n +\frac{\sqrt{\rho}}{\sqrt{1-\rho}}\cdot \left(Z-\Phi^{-1}(\alpha)-\frac{\inf \mu_i}{\sqrt{\rho}} \right)\right)\right].
\end{aligned}$$
Taking limits on both sides, we obtain 
$$\lim_{n \to \infty} \mathrm{AnyPower_{I}}(n, \alpha, \rho) \geq 1 - \mathbb{P}(Z> \Phi^{-1}(\alpha)+\frac{\displaystyle\inf_{i\in\mathcal{I}\setminus \mathcal{I}_0} \mu_i}{\sqrt{\rho}})$$
using \autoref{Phi^dn} and \autoref{appendixlemma2}. The other inequality is established similarly. 

\subsection*{F. Proof of \autoref{power_new_2}}
We have
$$\begin{aligned}
1 - \mathrm{AnyPower_{I}(n, \alpha, \rho) } & =\mathbb{E}\left[\prod_{i \in \mathcal{I}\setminus \mathcal{I}_0} \Phi\left(\frac{c_n(\alpha, \rho)+\sqrt{\rho} Z-\mu_{{i}}}{\sqrt{1-\rho}}\right)\right], \quad Z \sim N(0, 1) \\
& \leq \mathbb{E} \left[\Phi\left(\frac{\sqrt{1-\rho}\cdot a_n+\sqrt{\rho} Z-\mu_{n^{\star}}}{\sqrt{1-\rho}}\right)\right]
\end{aligned}$$
for some $\mu_{n^\star}$. The rest follows from the given condition since $\displaystyle\lim_{n \to \infty} \frac{a_n}{\sqrt{2 \log n}} =1$.

\subsection*{G. Proof of \autoref{block}}
The definition of the proposed procedure results in
\begin{align*}
    1 - \lim_{n \to \infty} \mathrm{FWER^{III}}\left(n, \alpha, \Sigma_n\right) 
    & = \lim_{n \to \infty} \mathbb{P}\left(\bigcap_{j=1}^m \bigcap_{i \in \mathcal{I}_{j0}} \{X_i \leq c_{k_j}(\rho_j, \beta)\}\right)\\
    & = \lim_{n \to \infty} \prod_{j=1}^{m} \mathbb{P}  \left(\bigcap_{i \in \mathcal{I}_{j0}} \{X_i \leq c_{k_j}(\rho_j, \beta)\}\right) \\
    & =  \prod_{j=1}^{m} \lim_{n \to \infty} \mathbb{P}  \left(\bigcap_{i \in \mathcal{I}_{j0}} \{X_i \leq c_{k_j}(\rho_j, \beta)\}\right) \\
 & = \prod_{j=1}^{m} (1-\beta) \hspace{2mm} \text{(using \autoref{fwer2})} \\
 & = (1-\beta)^{m} \\
 & = 1-\alpha \hspace{2mm} \text{(from the definition of $\beta$).}
 \end{align*}

 \subsection*{H. Proof of \autoref{kfwer1}}

We utilize the following result on the limiting distribution of extreme order statistics:

\begin{lemma}\label{galambos_k} (Theorem 2.8.1 of \cite{Galambos1978}) For real-valued sequences $a_n$ and $b_n>0$, and for a fixed integer $k>1$, as $n \rightarrow\infty$,
$$
F_{n-k+1: n}\left(a_n+b_n x\right)=\mathbb{P}\left(X_{n-k+1: n}<a_n+b_n x\right)
$$
converges weakly to a nondegenerate distribution function $H^{(k)}(x)$ if and only if,
$$
H_n\left(a_n+b_n x\right)=F_{n: n}\left(a_n+b_n x\right)$$ converges weakly to a nondegenerate distribution function $H(x)$. 

\noindent  If $H^{(k)}(x)$ exists, then for $x$ satisfying $0<H(x)<1$,
$$
H^{(k)}(x)=H(x) \cdot \displaystyle\sum_{i=0}^{k-1} \frac{\left[- \log H(x)\right]^i}{i!},
$$
where $H(x)$ is one of the three extreme value distributions.
\end{lemma}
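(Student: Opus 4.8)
The plan is to reduce both distribution functions in the statement to the single binomial counting variable $N_n(y) := \#\{i : X_i > y\}$, which under the i.i.d.\ sequence underlying Galambos's setting has law $\mathrm{Bin}(n, p_n(y))$ with $p_n(y) = 1 - F(y)$. Both CDFs are read off from this count: since $\{X_{n:n} \le y\} = \{N_n(y) = 0\}$ and $\{X_{n-k+1:n} < y\} = \{N_n(y) \le k-1\}$ (the strict/non-strict distinction being immaterial for the continuous $F$ arising in the extreme-value regime), one has
$$F_{n:n}(y) = (1-p_n(y))^n, \qquad F_{n-k+1:n}(y) = \sum_{j=0}^{k-1}\binom{n}{j}p_n(y)^j (1-p_n(y))^{n-j}.$$
First I would fix $x$, write $p_n := p_n(a_n+b_nx)$, and record the elementary dictionary: $(1-p_n)^n \to H(x) \in (0,1)$ if and only if $n p_n \to \lambda(x) := -\log H(x) \in (0,\infty)$, using $\log(1-p_n)\sim -p_n$ once $p_n\to 0$ (which a nondegenerate limit forces).

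For the \textbf{if} direction together with the explicit representation, I would invoke the Poisson limit theorem: when $n p_n \to \lambda(x)$, the count $N_n(a_n+b_nx)\sim\mathrm{Bin}(n,p_n)$ converges in law to $\mathrm{Poisson}(\lambda(x))$, so that
$$F_{n-k+1:n}(a_n+b_nx) = \mathbb{P}(N_n \le k-1) \longrightarrow e^{-\lambda(x)}\sum_{j=0}^{k-1}\frac{\lambda(x)^j}{j!}.$$
Substituting $e^{-\lambda(x)} = H(x)$ and $\lambda(x) = -\log H(x)$ rewrites the limit as $H(x)\sum_{j=0}^{k-1}\frac{[-\log H(x)]^j}{j!}$, which is exactly the claimed $H^{(k)}(x)$; this yields one implication and the representation formula at once.

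For the \textbf{converse} I would exploit monotonicity of the Poisson tail. Set $g(\lambda) := \mathbb{P}(\mathrm{Poisson}(\lambda)\le k-1) = e^{-\lambda}\sum_{j=0}^{k-1}\lambda^j/j!$; a telescoping differentiation gives $g'(\lambda) = -e^{-\lambda}\lambda^{k-1}/(k-1)! < 0$, so $g$ is a continuous strictly decreasing bijection of $[0,\infty]$ onto $[0,1]$. Assuming $F_{n-k+1:n}(a_n+b_nx)\to H^{(k)}(x)$ nondegenerate, fix $x$ with $H^{(k)}(x)\in(0,1)$ and pass to any subsequence along which $np_n\to\lambda\in[0,\infty]$ (compactness of $[0,\infty]$). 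The Poisson passage forces $\mathbb{P}(N_n\le k-1)\to g(\lambda)$ along it, so $g(\lambda)=H^{(k)}(x)$, and injectivity of $g$ makes $\lambda = g^{-1}(H^{(k)}(x))$ independent of the subsequence. Hence $np_n$ converges, whence $(1-p_n)^n\to e^{-g^{-1}(H^{(k)}(x))}=:H(x)$ is a nondegenerate limit. Since $H$ is the limit of $F^n(a_n+b_nx)$, the Fisher–Tippett–Gnedenko theorem (already used for \autoref{appendixlemma2}) identifies it as one of the three extreme-value types.

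The main obstacle is the converse: the boundary cases $\lambda\in\{0,\infty\}$ must be treated (there $g$ attains its endpoint values $1$ and $0$, corresponding precisely to the degenerate limits that nondegeneracy of $H^{(k)}$ rules out), and one must check that the subsequence argument holds at enough points $x$ to promote the conclusion from pointwise convergence to genuine weak convergence of the full sequence. By contrast, the forward direction and the algebra producing the representation are routine once the binomial-to-Poisson passage is installed.
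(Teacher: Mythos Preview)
The paper does not prove this lemma at all: it is quoted verbatim as Theorem~2.8.1 of \cite{Galambos1978} and then invoked as a black box to establish \autoref{prop1}. So there is no ``paper's own proof'' to compare against.

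Your argument is correct and is in fact the standard route to this result (and essentially what Galambos does). The reduction to the binomial count $N_n\sim\mathrm{Bin}(n,p_n)$, the equivalence $(1-p_n)^n\to H(x)\Leftrightarrow np_n\to-\log H(x)$, the Poisson passage for the forward direction, and the strict monotonicity of $g(\lambda)=e^{-\lambda}\sum_{j<k}\lambda^j/j!$ for the converse are all sound; your telescoping computation of $g'$ is right. The two caveats you flag are the only genuine ones: the boundary cases $\lambda\in\{0,\infty\}$ are handled by noting $\mathbb{P}(N_n\le k-1)\to 1$ when $np_n\to 0$ and $\to 0$ when $np_n\to\infty$ (the latter via, e.g., Chebyshev on $N_n$), both excluded by nondegeneracy of $H^{(k)}$; and the promotion from pointwise to weak convergence follows because the limit $H(x)=\exp\{-g^{-1}(H^{(k)}(x))\}$ is automatically monotone nondecreasing and right-continuous at continuity points of $H^{(k)}$, hence a genuine distribution function. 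Neither is a gap, just bookkeeping you have already anticipated.
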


\noindent Using Lemma \ref{galambos_k}, we establish the following generalization of Lemma \ref{appendixlemma2}: 
\begin{proposition}\label{prop1}
Suppose $X_1, \ldots, X_n$ are i.i.d random variables having $N(0, 1)$ distribution. Let $\Phi_{n-k+1:n}$ denote the cdf of $X_{n-k+1:n}$, where $X_{n-k+1:n}$ is the $k$-th order statistic. Define $a_{n}=\Phi^{-1}\left(1-\frac{1}{n}\right) \ \forall \ n \geq 1$. Then, $$\displaystyle\lim _{n \rightarrow \infty} \Phi_{n-k+1:n}\left(a_{n}+t\right)=\left\{\begin{array}{ll}0 & \text { for } t<0, \\ e^{-1}\displaystyle\sum_{i=0}^{k-1} \frac{1}{i!} & \text { for } t=0, \\ 1 & \text { for } t>0. \end{array}\right.$$
\end{proposition}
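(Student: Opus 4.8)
The plan is to mimic the structure of the proof of \autoref{appendixlemma2}, but now invoking the refined extreme-value result \autoref{galambos_k} for the $k$-th largest order statistic rather than just the maximum. The starting point is that, for i.i.d.\ $N(0,1)$ variables, the maximum $X_{n:n}$ satisfies $\Phi^n(a_n + b_n x) \to H(x) = \exp(-e^{-x})$ with $a_n = \Phi^{-1}(1-1/n)$ and $b_n \approx 1/a_n$, which is the Gumbel distribution; this was already recorded in the proof of \autoref{appendixlemma2}. By \autoref{galambos_k}, the $k$-th extreme then satisfies $\Phi_{n-k+1:n}(a_n + b_n x) \to H^{(k)}(x) = H(x)\sum_{i=0}^{k-1}\frac{[-\log H(x)]^i}{i!} = e^{-e^{-x}}\sum_{i=0}^{k-1}\frac{e^{-ix}}{i!}$ for $x$ with $0 < H(x) < 1$, i.e.\ for all real $x$ here.

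For the three cases I would argue exactly as in \autoref{appendixlemma2}. For $t > 0$: since $\Phi_{n-k+1:n}$ is a nondecreasing function and, for any fixed $x>0$, we have $t > b_n x$ for all large $n$ (because $b_n \to 0$), it follows that $\Phi_{n-k+1:n}(a_n+t) \ge \Phi_{n-k+1:n}(a_n + b_n x)$, whose limit is $H^{(k)}(x) \to 1$ as $x \to \infty$ (note $H(x) \to 1$ and the polynomial factor $\sum_{i=0}^{k-1} e^{-ix}/i! \to 1$). A standard $\varepsilon$-argument then gives $\Phi_{n-k+1:n}(a_n+t) \to 1$. For $t < 0$: symmetrically, $\Phi_{n-k+1:n}(a_n+t) \le \Phi_{n-k+1:n}(a_n + b_n x)$ for $x < 0$ and all large $n$, and $H^{(k)}(x) \to 0$ as $x \to -\infty$ (here $H(x) \to 0$ faster than the polynomial factor $\sum_{i=0}^{k-1} e^{-ix}/i!$ blows up, since $e^{-e^{-x}} e^{(k-1)(-x)} \to 0$ as $x \to -\infty$); again the $\varepsilon$-argument yields the limit $0$. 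For $t = 0$: since $a_n + t = a_n$ is not of the form $a_n + b_n x$ with $x$ fixed unless $x = 0$, I would simply plug in $x = 0$ in the convergence $\Phi_{n-k+1:n}(a_n + b_n \cdot 0) \to H^{(k)}(0)$, giving $H^{(k)}(0) = e^{-1}\sum_{i=0}^{k-1}\frac{1}{i!}$, which is precisely the claimed value.

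The main technical point to get right is the behavior of $H^{(k)}(x)$ at the two ends of the real line — specifically confirming that the polynomial-in-$e^{-x}$ correction factor does not spoil the limits $0$ and $1$ coming from the Gumbel CDF $H(x)$. As $x \to +\infty$ this is immediate since each term $e^{-ix}/i! \to 0$ for $i \ge 1$ and the $i=0$ term is $1$, so $H^{(k)}(x) \to 1\cdot 1 = 1$. As $x \to -\infty$, writing $H^{(k)}(x) = \sum_{i=0}^{k-1} \frac{1}{i!} e^{-e^{-x}} e^{-ix}$, each summand is $\frac{1}{i!}\exp(-e^{-x} - ix)$, and since $e^{-x} \to \infty$ dominates the linear term $ix$ (for fixed $k$, hence fixed $i \le k-1$), every summand tends to $0$; so $H^{(k)}(x) \to 0$. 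The rest is the same $\varepsilon_1,\varepsilon_2$ bookkeeping already carried out for \autoref{appendixlemma2}, so I would state it briefly and refer back rather than repeat it in full. One subtlety worth a remark: \autoref{galambos_k} is stated for $k > 1$, so for completeness the case $k=1$ is exactly \autoref{appendixlemma2} and can be cited directly.
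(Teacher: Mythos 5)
Your proposal is correct and follows essentially the same route as the paper: invoke the Galambos result (\autoref{galambos_k}) together with the Gumbel limit for the normal maximum to identify $H^{(k)}(x)=e^{-e^{-x}}\sum_{i=0}^{k-1}e^{-ix}/i!$, and then run the same three-case $\varepsilon$-argument as in \autoref{appendixlemma2}, which is exactly what the paper does (it merely leaves the tail checks of $H^{(k)}$ and the $t=0$ evaluation implicit, which you spell out). Your added remark that $k=1$ falls back on \autoref{appendixlemma2} is a harmless and sensible completion.
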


\begin{proof}[Proof of Proposition \ref{prop1}]
In our setting, $F=\Phi$. As mentioned earlier, we have 
$$
\lim_{n \to \infty} \Phi^{n}\left(a_{n}+b_{n} x\right) = \exp \left(-e^{-x}\right), \hspace{2mm} \text {where} \hspace{2mm} a_{n}=\Phi^{-1}\left(1-\frac{1}{n}\right), b_{n} \approx \frac{1}{a_{n}}.
$$
In other words, for Normal distribution, with these choices of $a_n\text{ and }b_n$, $H(x) = \exp \left(-e^{-x}\right)$. Hence, applying Lemma \ref{galambos_k} for finite $k \in \mathbb{N}$, we obtain
\begin{equation}\label{k_limit}
\lim_{n \to \infty} \Phi_{n-k+1: n}\left(a_n+b_n x\right) = H^{(k)}(x):=H(x) \cdot \displaystyle\sum_{i=0}^{k-1} \frac{\{- \log H(x)\}^i}{i!}
\end{equation}
where $H(x) = \exp \left(-e^{-x}\right)$. A little computation gives
$H^{(k)}(x)  = \displaystyle\sum_{i=0}^{k-1} \frac{e^{-(e^{-x}+ix)}}{i!}$.

The rest of the proof proceeds the exact similar way as in the proof of Lemma \ref{appendixlemma2} by considering the three cases: $t<0$, $t>0$, and $t=0$ and then accordingly as in \autoref{fwer2}. \end{proof}

\noindent One observes that, under the global null
\begin{align*}
    k\text{-}\mathrm{FWER_{H_0}^{I}}(n, \alpha, \rho) & = 1 - \mathbb{P}(X_{n-k+1: n} \leq c_n(\alpha, \rho))\\
    &= 1 - \mathbb{P}(-\sqrt{\rho}\cdot Z+ \sqrt{1-\rho}\cdot Z_{n-k+1: n} \leq c_n(\alpha, \rho)) \\
    &= 1 - \mathbb{P} \left(  Z_{n-k+1: n} \leq \frac{c_n(\alpha, \rho)) + \sqrt{\rho}Z}{\sqrt{1-\rho}}\right) \\
    &= 1 - \mathbb{P} \left(  Z_{n-k+1: n} \leq a_n + \frac{\sqrt{\rho}}{\sqrt{1-\rho}}\left(Z-\Phi^{-1}(\alpha)\right)\right)
\end{align*}

\noindent Let $Y:=\mathbb{I}(Z>\Phi^{-1}(\alpha))$. Then, similar to the proof of \autoref{appendixlemma3}, one obtains 
$$\mathbb{P} \left(  Z_{n-k+1: n} \leq a_n + \frac{\sqrt{\rho}}{\sqrt{1-\rho}}\left(Z-\Phi^{-1}(\alpha)\right)\right) \overset{a.s}{\longrightarrow} Y.$$
Using the Dominated Convergence Theorem, one readily gets 
$$\displaystyle\lim_{ n \to \infty} k\text{-}\mathrm{FWER_{H_0}^{I}}(n, \alpha, \rho) = 1 - \mathbb{E}(Y) = \alpha.$$
\noindent

\subsection*{I. Proof of \autoref{kfwer2}}



\noindent We have 
$$
\begin{aligned}
k_n\text{-}\mathrm{F W E R}(n, \alpha, \rho) & =\mathbb{P}\left(X_{\left(n_0-k_n+1\right)}>c_n(\alpha, \rho)\right) \\
& =1-\mathbb{E}_Z\left[\binom{n_0}{k_n-1} \Phi^{n_0-k_n+1}\left(a_n+\frac{\sqrt{\rho}}{\sqrt{1-\rho}}\left(Z-\Phi^{-1}(\alpha)\right)\right)\right].
\end{aligned}
$$
Now, for each $k_n \leq n_0$ and for each $t \in \mathbb{R}$, one has $\binom{n_0}{k_n-1} \Phi\left(a_n+t\right) > 1$ for all sufficiently large $n$. Thus, one has, for all sufficiently large $n$,
\begin{equation}
k_n\text{-}\mathrm{F W E R}(n, \alpha, \rho) \leq 1-\mathbb{E}_Z\left[\Phi^{n_0-k_n}\left(a_n+\frac{\sqrt{\rho}}{\sqrt{1-\rho}}\left(Z-\Phi^{-1}(\alpha)\right)\right)\right].\label{k_n_fwer}
\end{equation}
Now, putting $d_n=n_0-k_n$ in \autoref{Phi^dn} and then using DCT on the second term in RHS of \ref{k_n_fwer}, the rest follows.

\section*{Statements and Declaration}

The authors hereby state that they do not have any relevant financial or non-financial competing interest.
\vspace{4mm}

\bibliography{references}

\begin{thebibliography}{21}
\providecommand{\natexlab}[1]{#1}
\providecommand{\url}[1]{\texttt{#1}}
\expandafter\ifx\csname urlstyle\endcsname\relax
  \providecommand{\doi}[1]{doi: #1}\else
  \providecommand{\doi}{doi: \begingroup \urlstyle{rm}\Url}\fi

\bibitem[Bogdan et~al.(2011)Bogdan, Chakrabarti, Frommlet, and Ghosh]{Bogdan}
M.~Bogdan, A.~Chakrabarti, F.~Frommlet, and J.~K. Ghosh.
\newblock {Asymptotic Bayes-optimality under sparsity of some multiple testing procedures}.
\newblock \emph{The Annals of Statistics}, 39\penalty0 (3):\penalty0 1551 -- 1579, 2011.
\newblock URL \url{https://doi.org/10.1214/10-AOS869}.

\bibitem[Bretz et~al.(2011)Bretz, Hothorn, and Westfall]{BretzBook}
F.~Bretz, T~Hothorn, and P.~Westfall.
\newblock \emph{Multiple Comparisons Using R}.
\newblock CRC Press, 2011.

\bibitem[Das and Bhandari(2021)]{das_2021}
N.~Das and S.~K. Bhandari.
\newblock {Bound on FWER for correlated normal}.
\newblock \emph{Statistics \& Probability Letters}, 168\penalty0 (C):\penalty0 108943, 2021.
\newblock URL \url{https://www.sciencedirect.com/science/article/abs/pii/S0167715220302467}.

\bibitem[Delattre and Roquain(2011)]{delattre}
S.~Delattre and E.~Roquain.
\newblock On the false discovery proportion convergence under gaussian equi-correlation.
\newblock \emph{Statistics \& Probability Letters}, 81\penalty0 (1):\penalty0 111--115, 2011.
\newblock URL \url{https://www.sciencedirect.com/science/article/pii/S0167715210002750}.

\bibitem[Dey(2024)]{deystpa}
M.~Dey.
\newblock On limiting behaviors of stepwise multiple testing procedures.
\newblock \emph{Statistical Papers}, 65:\penalty0 5691--5717, 2024.
\newblock URL \url{https://doi.org/10.1007/s00362-024-01613-6}.

\bibitem[Dey and Bhandari(2023{\natexlab{a}})]{deybhandari}
M.~Dey and S.~K. Bhandari.
\newblock {FWER} goes to zero for correlated normal.
\newblock \emph{Statistics \& Probability Letters}, 193:\penalty0 109700, 2023{\natexlab{a}}.
\newblock URL \url{https://www.sciencedirect.com/science/article/pii/S0167715222002139}.

\bibitem[Dey and Bhandari(2023{\natexlab{b}})]{deybhandari_seq}
M.~Dey and S.~K. Bhandari.
\newblock Asymptotically optimal sequential multiple testing procedures for correlated normal, 2023{\natexlab{b}}.
\newblock URL \url{https://arxiv.org/abs/2309.16657}.

\bibitem[Dey and Bhandari(2024)]{deybhandaristpa}
M.~Dey and S.~K. Bhandari.
\newblock Bounds on generalized family-wise error rates for normal distributions.
\newblock \emph{Statistical Papers}, 65:\penalty0 2313–2326, 2024.
\newblock URL \url{https://doi.org/10.1007/s00362-023-01487-0}.

\bibitem[Dickhaus(2014)]{Dickhaus}
T.~Dickhaus.
\newblock \emph{Simultaneous Statistical Inference With Applications in the Life Sciences}.
\newblock Springer, 2014.

\bibitem[Dudoit and Laan(2008)]{Dudoit}
S.~Dudoit and Mark~J. Laan.
\newblock \emph{Multiple Testing Procedures with Applications to Genomics}.
\newblock Springer Series in Statistics. Springer, 2008.

\bibitem[Finner et~al.(2007)Finner, Dickhaus, and Roters]{FDR2007}
H.~Finner, T.~Dickhaus, and M.~Roters.
\newblock {Dependency and false discovery rate: Asymptotics}.
\newblock \emph{The Annals of Statistics}, 35\penalty0 (4):\penalty0 1432--1455, 2007.
\newblock URL \url{https://www.jstor.org/stable/25464546}.

\bibitem[Finner et~al.(2009)Finner, Dickhaus, and Roters]{FDR2009}
H.~Finner, T.~Dickhaus, and M.~Roters.
\newblock {On the false discovery rate and an asymptotically optimal rejection curve}.
\newblock \emph{The Annals of Statistics}, 37\penalty0 (2):\penalty0 596--618, 2009.
\newblock URL \url{https://doi.org/10.1214/07-AOS569}.

\bibitem[Galambos(1978)]{Galambos1978}
J.~Galambos.
\newblock \emph{The Asymptotic Theory of Extreme Order Statistics}.
\newblock Wiley, New York, 1978.
\newblock URL \url{https://search.library.wisc.edu/catalog/999497982802121}.
\newblock Includes index; bibliography: pages 331--348.

\bibitem[Grandhi(2015)]{Grandhi}
A.~Grandhi.
\newblock \emph{Multiple testing procedures for complex structured hypotheses and directional decisions, PhD Thesis}.
\newblock New Jersey Institute of Technology, 2015.

\bibitem[Guo and Rao(2010)]{GuoRao2010}
W.~Guo and M.~B. Rao.
\newblock {On stepwise control of the generalized familywise error rate}.
\newblock \emph{Electronic Journal of Statistics}, 4:\penalty0 472 -- 485, 2010.
\newblock URL \url{https://doi.org/10.1214/08-EJS320}.

\bibitem[Petrov(1995)]{Petrov}
V.~V. Petrov.
\newblock \emph{Limit Theorems of Probability Theory}.
\newblock Oxford Studies in Probability. Oxford Science Publications, 1995.

\bibitem[Proschan and Shaw(2011)]{Proschan}
M.~A. Proschan and P.~A. Shaw.
\newblock Asymptotics of {B}onferroni for dependent normal test statistics.
\newblock \emph{Statistics \& Probability Letters}, 81\penalty0 (7):\penalty0 739--748, 2011.
\newblock URL \url{https://www.sciencedirect.com/science/article/pii/S0167715210003329}.
\newblock Statistics in Biological and Medical Sciences.

\bibitem[Ramsey(1978)]{Ramsey1978}
P.~H. Ramsey.
\newblock Power differences between pairwise multiple comparisons.
\newblock \emph{Journal of the American Statistical Association}, 73\penalty0 (363):\penalty0 479--485, 1978.
\newblock URL \url{https://www.tandfonline.com/doi/abs/10.1080/01621459.1978.10480038}.

\bibitem[Roy and Bhandari(2024)]{royspl}
R.~Roy and S.~K. Bhandari.
\newblock Asymptotic bayes’ optimality under sparsity for exchangeable dependent multivariate normal test statistics.
\newblock \emph{Statistics \& Probability Letters}, 207:\penalty0 110030, 2024.
\newblock URL \url{https://www.sciencedirect.com/science/article/pii/S0167715223002535}.

\bibitem[Slepian(1962)]{Slepian}
D.~Slepian.
\newblock The one-sided barrier problem for gaussian noise.
\newblock \emph{The Bell System Technical Journal}, 41\penalty0 (2):\penalty0 463--501, 1962.
\newblock URL \url{https://doi.org/10.1002/j.1538-7305.1962.tb02419.x}.

\bibitem[Zhu et~al.(2007)Zhu, Li, and Li]{zhu_2007}
D.~Zhu, Y.~Li, and H.~Li.
\newblock {Multivariate correlation estimator for inferring functional relationships from replicated genome-wide data}.
\newblock \emph{Bioinformatics}, 23\penalty0 (17):\penalty0 2298--2305, 06 2007.
\newblock \doi{10.1093/bioinformatics/btm328}.
\newblock URL \url{https://doi.org/10.1093/bioinformatics/btm328}.

\end{thebibliography}

\end{document}